\numberwithin{equation}{section}
\def\Xint#1{\mathchoice
{\XXint\displaystyle\textstyle{#1}}%
{\XXint\textstyle\scriptstyle{#1}}%
{\XXint\scriptstyle\scriptscriptstyle{#1}}%
{\XXint\scriptscriptstyle\scriptscriptstyle{#1}}%
\!\int}
\def\XXint#1#2#3{{\setbox0=\hbox{$#1{#2#3}{\int}$ }
\vcenter{\hbox{$#2#3$ }}\kern-.6\wd0}}
\def\dashint{\Xint-}
\newcommand{\E}{\mathcal E}
\newcommand{\T}{\mathcal T}
\newcommand{\Div}{\mathop{\rm div}}
\newcommand{\curl}{\mathop{\rm curl}}
\definecolor{darkred}{rgb}{.7,0,0}
\definecolor{green}{rgb}{0,0.7,0}
\def\B{\mathfrak{B}}
\def\H{\mathfrak{H}}
\def\Z{\mathfrak{Z}}
\def\trace{\hspace{1pt}{\rm tr}\hspace{2pt}}
\def\osc{\hspace{1pt}{\rm osc}\hspace{1pt}}
\def\d{\hspace{2pt}{\rm d}}
\newcommand{\la}{\langle}
\newcommand{\ra}{\rangle}
\newtheorem{theorem}{Theorem}
\newtheorem{lemma}[theorem]{Lemma}
\newtheorem{remark}{Remark}
\title[A posteriori FEEC]{A posteriori error estimates for finite element exterior calculus: The de Rham complex}
\author[A~.Demlow]{Alan Demlow$^1$}
\thanks{$^1$Partially supported by National Science Foundation grant DMS-1016094.}
\address{
  Department of Mathematics,
  University of Kentucky,
  715 Patterson Office Tower,
  Lexington, KY 40506,
  USA}
\email{alan.demlow@uky.edu}
\author[A.N.~Hirani]{Anil N. Hirani$^2$}
\thanks{$^2$Partially supported by National Science Foundation grant DMS-0645604.}
\address{Department of Computer Science, University of Illinois at Urbana-Champaign, 201 N. Goodwin Avenue, Urbana, IL 61801, USA}
\email{hirani@cs.illinois.edu}
\keywords{Finite element methods, exterior calculus, a posteriori error estimates, adaptivity}
\subjclass{65N15, 65N30}
\begin{document}

\begin{abstract}  Finite element exterior calculus (FEEC) has been developed over the past decade as a framework for constructing and analyzing stable and accurate numerical methods for partial differential equations by employing differential complexes.   The recent work of Arnold, Falk and Winther \cite{ArFaWi2010} includes a well-developed theory of finite element methods for Hodge Laplace problems, including a priori error estimates.  In this work we focus on developing a posteriori error estimates in which the computational error is bounded by some  computable functional of the discrete solution and problem data.  More precisely, we prove a posteriori error estimates of residual type for Arnold-Falk-Winther mixed finite element methods for Hodge-de Rham Laplace problems.  While a number of previous works consider a posteriori error estimation for Maxwell's equations and mixed formulations of the scalar Laplacian, the approach we take is distinguished by unified treatment of the various Hodge Laplace problems arising in the de Rham complex, consistent use of the language and analytical framework of differential forms, and the development of a posteriori error estimates for harmonic forms and the effects of their approximation on the resulting numerical method for the Hodge Laplacian.
 \end{abstract}



\maketitle


\section{Introduction}

In this paper we study a posteriori error estimation for finite element methods for the Hodge Laplacian for the de Rham complex generated by the Finite Element Exterior Calculus (FEEC) framework of Arnold, Falk, and Winther (abbreviated AFW below).  Finite element exterior calculus has been developed over the past decade as a general framework for constructing and analyzing mixed finite element methods for approximately solving partial differential equations.  In mixed methods two or more variables are approximated simultaneously, for example, stresses and displacements in elasticity or pressures and velocities in fluid problems.  The essential feature of FEEC is that differential complexes are systematically used in order to develop and analyze stable and efficient numerical methods.  Historically speaking, some aspects of mixed finite element theory such as the so-called ``commuting diagram property'' (cf. \cite{BF91}) are related to differential complexes, and some early work by geometers such as Dodziuk~\cite{Dodziuk1976} and computational electromagnetics researchers such as Bossavit and
others~\cite{Bossavit1988a} also contains ideas related to finite element exterior calculus.  However, around 2000 researchers working especially in electromagnetics and elasticity~\cite{Hiptmair1999,
  Arnold2002} independently began to realize that differential complexes can be systematically exploited in the numerical analysis of PDEs.  This work has culminated in the recent publication of the seminal work of Arnold, Falk, and Winther~\cite{ArFaWi2010} containing a general framework for FEEC (cf. also \cite{ArFaWi2006}).

Error analysis of numerical methods for PDE is generally divided into two categories, a priori and a posteriori.  To fix thoughts, let $u$ solve $-\Delta u=f$ in a polygonal domain $\Omega$ with Neumann boundary conditions $\frac{\partial u}{\partial n}=0$ on $\partial \Omega$ and $\int_\Omega u=0$ assumed in order to guarantee uniqueness.  Also let $u_h \in S_h$ be a finite element approximation to $u$, where $S_h \subset H^1(\Omega)$ is the continuous piecewise polynomials of fixed degree $r$ with respect to a mesh $\T_h$.  A classical a priori estimate is
\begin{equation}\label{eq1-1}
\|u-u_h\|_{H^1} \le C h^r \|u\|_{r+1}.
\end{equation}
Such estimates are useful for verifying optimality of methods with respect to polynomial degree and are commonly used to verify code correctness.  However, they provide no information about the actual size of the computational error in any given practical problem and often assume unrealistic regularity of the unknown solution.  A posteriori error estimates provide a complementary error analysis in which the error is bounded by a computable functional of $u_h$ and $f$:
\begin{equation} \label{eq1-2}
\|u-u_h\| \le \E(u_h,f).
\end{equation}
Such estimates provide no immediate information about asymptotic error decrease, but do ideally yield concrete and reliable information about the actual size of the error in computations.  In addition, $\E(u_h, f)$ and related quantities are typically used to derive adaptive finite element methods in which information from a given computation is used to selectively refine mesh elements in order to yield a more efficient approximation.  We do not directly study adaptivity here.  

While there are many types of a posteriori error estimators \cite{AO00, BR03}, we focus our attention on residual-type error estimators.  Roughly speaking, residual estimators are designed to control $u-u_h$ by controlling the residual $f+ \Delta u_h$, which is not a function (since $\nabla u_h$ is only piecewise continuous) but is a functional lying in the dual space of $H^1(\Omega)\slash \mathbb{R}$.  Given a triangle $K \in \T_h$, let $h_K={\rm diam}(T)$.  We define the {\it elementwise a posteriori error indicator}%
\begin{equation}\label{eq1-3} \eta(K)=h_K \|f+\Delta u_h\|_{L_2(K)}+ h_K^{1/2} \|\llbracket \nabla u_h \rrbracket \|_{L_2(\partial K)}.  \end{equation}%
The {\it volumetric residual} $h_K \|f+\Delta u_h\|_{L_2(K)}$ may roughly be seen as bounding the regular portion of the residual $f+ \Delta u_h$.  
$\llbracket \nabla u_h \rrbracket$ is defined as the jump in the normal component of $\nabla u_h$ across interior element boundaries and as $\nabla u_h \cdot n$ on element faces $e \subset \partial \Omega$.  Since natural boundary conditions are satisfied only approximately in the finite element method, the latter quantity is not generally 0.  The corresponding term  in \eqref{eq1-3} may be thought of as measuring the singular portion of the distribution $f+ \Delta u_h$.  A standard result is that under appropriate assumptions on $\T_h$,
\begin{equation} \label{eq1-4}
\|u-u_h\|_{H^1(\Omega) \slash \mathbb{R}} \le C ( \sum_{K \in \T_h} \eta(K)^2)^{1/2}.
\end{equation}
That is, $\E(u_h,f)=C (\sum_{K \in \T_h} \eta(K)^2)^{1/2}$ is a {\it reliable} error estimator for the energy error $\|u-u_h\|_{H^1(\Omega) \slash \mathbb{R}}$.  An error estimator $\E$ is said to be efficient if $\E(u_h,f) \le \tilde{C} \|u-u_h\|$, perhaps up to higher-order terms.  
Given $K \in \T_h$, let $\omega_K$ be the ``patch'' of elements touching $K$.  We also define $\osc(K)= h_K \|f-P f \|_{L_2(K)}$, where $Pf$ is the $L_2(K)$ projection onto the polynomials having degree one less than the finite element space.  Then
\begin{equation}\label{eq1-5}
\eta(K)^2 \le C \left (\|u-u_h\|_{H^1(\omega_K)}^2 + \sum_{K \subset \omega_K} \osc(K)^2 \right).
\end{equation}
In our development below we recover \eqref{eq1-4} and \eqref{eq1-5} and also develop similar results for other Hodge Laplace problems such as the vector Laplacian.  

We pause to remark that residual estimators are usually relatively rough estimators in the sense that the ratio $\E(u_h,f)/\|u-u_h\|$ is often not close to 1 as would be ideal, and there are usually unknown constants in the upper bounds.   However, they have a structure closely related to the PDE being studied, generally provide unconditionally reliable error estimates up to constants, and can be used as building blocks in the construction and analysis of sharper error estimators.   Thus they are studied widely and often used in practice.  

In this work we prove a posteriori error estimates for mixed finite element methods for the Hodge Laplacian for the de Rham complex.  Let $H \Lambda^0 \overset{d}{\rightarrow} H \Lambda^1 \overset{d}{\rightarrow} \cdots \overset{d}{\rightarrow} H \Lambda^{n-1} \overset{d} \rightarrow L_2$ be the $n$-dimensional de Rham complex.  Here $\Lambda^k$ consists of $k$-forms and $H \Lambda^k$ consists of $L_2$-integrable $k$-forms $\omega$ with $L_2$ integrable exterior derivative $d \omega$.  For $n=3$, the de Rham complex is $H^1 \overset{\nabla}{\rightarrow} H({\rm curl}) \overset{\rm curl}{\rightarrow} H({\rm div}) \overset{{\rm div}}{\rightarrow} L_2$.  For $0 \le k \le n$, the Hodge Laplacian problem is given by $\delta d u+ d \delta u=f$, where $\delta$ is the adjoint (codifferential) of the exterior derivative $d$.  When $n=3$, the $0$-Hodge Laplacian is the standard scalar Laplacian, and the AFW mixed formulation reduces to the standard weak formulation of the Laplacian with natural Neumann boundary conditions.  The $1$- and $2$-Hodge Laplacians are instances of the vector Laplacian $\curl \curl-\nabla \Div$ with different boundary conditions, and the corresponding FEEC approximations are mixed approximations to these problems.  The $3$-Hodge Laplacian is again the scalar Laplacian, but the AFW mixed finite element method now coincides with a standard mixed finite element method such as the Raviart-Thomas formulation, and Dirichlet boundary conditions are natural.  We also consider essential boundary conditions below.

Next we briefly outline the scope of our results and compare them with previous work.  First, in the context of mixed methods for the scalar Laplacian and especially FEM for Maxwell's equations two technical tools have proved essential for establishing a posteriori error estimates.  These are regular decompositions \cite{Hip02, PZ02} and locally bounded commuting quasi-interpolants \cite{Sch08}.  Relying on recent analytical literature and modifying existing results to meet our needs, we provide versions of these tools for differential forms in arbitrary space dimension.  Next, our goal is to prove a posteriori estimates simultaneously for mixed approximations to all $k$-Hodge Laplacians ($0 \le k \le n$) in the de Rham complex.  Focusing individually on the various Hodge Laplace operators, we are unaware of previous proofs of a posteriori estimates for the  vector  Laplacian, although a posteriori estimates for Maxwell's equations are well-represented in the literature; \cite{BHHW00, Sch08, ZCSWX11} among many others.  The estimators that we develop for the standard mixed formulation for the well-studied case of the scalar Laplacian are also modestly different from those previously appearing in the literature (cf. \S\ref{subsec_classicalmixed} below).   In addition, our work extends beyond the two- and three-dimensional setting assumed in these previous works. 
Throughout the paper we also almost exclusively use the notation and language of, and analytical results for, differential forms.  The only exception is \S\ref{sec_examples}, where we use standard notation to write down our results for all four three-dimensional Hodge-Laplace operators.  This use of differential forms enables us to systematically highlight properties of finite element approximations to Hodge Laplace problems in a unified fashion.  
A final unique feature of our development is our treatment of harmonic forms.  In \S\ref{subsec_harmonic}, we give an abstract framework for bounding a posteriori the {\it gap} between the spaces $\H^k$ of continuous forms and $\H_h^k$ of discrete harmonic forms (defined below). This framework is an important part of our theory for the Hodge Laplacian and is also potentially of independent interest in situations where harmonic forms are a particular focus. Since our results include bounds for the error in approximating harmonic forms, our estimators also place no restrictions on domain topology.  We are not aware of previous works where either errors in approximating harmonic forms or the effects of such errors on the approximation of related PDE are analyzed a posteriori.   

We next briefly describe an interesting feature of our results.  The AFW mixed method for the $k$-Hodge Laplace problem simultaneously approximates the solution $u$, $\sigma=\delta u$, and the projection $p$ of $f$ onto the harmonic forms by a discrete triple $(\sigma_h, u_h, p_h)$.  The natural starting point for error analysis is to bound the $H \Lambda^{k-1} \times H \Lambda^k \times L_2$ norm 
of the error, since this is the variational norm naturally related to the ``inf-sup'' condition used to establish stability for the weak mixed formulation.  Abstract a priori bounds for this quantity are given in Theorem 3.9 of \cite{ArFaWi2010} (cf. \eqref{eq2-6} below), and we carry out a posteriori error analysis only in this natural mixed variational norm.  Aside from its natural connection with the mixed variational structure, this norm yields control of the error in approximating the Hodge decomposition of the data $f$ when $1 \le k \le n-1$, which may be advantageous.

    As in the a priori error analysis, the natural variational norm has some disadvantages.  Recall that residual estimators for the scalar Laplacian bound the residual $f + \Delta u_h$ in a negative-order Sobolev norm.  For approximations of the vector Laplacian, establishing efficient and reliable a posteriori estimators in the natural norm requires that different portions of the Hodge decomposition of the residual $f-d \sigma_h-p_h -\delta d u_h$ be measured in different norms.  Doing so requires access to the Hodge decomposition of $f$, but it is rather restrictive to assume access to this decomposition a priori.  
 We are able to access the Hodge decomposition of $f$ weakly in our estimators below, but at the expense of requiring more regularity of $f$ than is needed to write the Hodge Laplace problem (cf. \S\ref{sec4-2}).  In the a priori setting it is often possible to obtain improved error estimates by considering the discrete variables and measuring their error separately in weaker norms; cf. Theorem 3.11 of \cite{ArFaWi2010}.  This is an interesting direction for future research in the a posteriori setting as it may help to counteract this ``Hodge imbalance'' in the residual.


The paper is organized as follows.  In Section~\ref{sec_hilbert} we review the Hilbert complex structure employed in finite element exterior calculus, begin to develop a posteriori error estimates using this structure, and establish a framework for bounding errors in approximating harmonic forms.  In Section~\ref{sec_derham}, we recall details about the de Rham complex and also prove some important auxiliary results concerning commuting quasi-interpolants and regular decompositions.  Section \ref{sec_reliability} contains the main theoretical results of the paper, which establish a posteriori upper bounds for errors in approximations to the Hodge Laplacian for the de Rham complex.  Section \ref{sec_efficiency} contains corresponding elementwise efficiency results.  In Section~\ref{sec_examples} we demonstrate how our results apply to several specific examples from the three-dimensional de Rham complex and where appropriate compare our estimates to previous literature.  


\section{Hilbert complexes, harmonic forms, and abstract error
  analysis} \label{sec_hilbert}

In this section we recall basic definitions and properties of Hilbert
complexes, then begin to develop a framework for a posteriori error
estimation.

\subsection{Hilbert complexes and the abstract Hodge
  Laplacian} \label{sec2-1}

The definitions in this section closely follow \cite{ArFaWi2010},
which we refer the reader to for a more detailed presentation.  We
assume that there is a sequence of Hilbert spaces $W^k$ with inner
products $\langle \cdot, \cdot \rangle$ and associated norms $\| \cdot
\|$ and closed, densely defined linear maps $d^k$ from $W^k$ into
$W^{k+1}$ such that the range of $d^k$ lies in the domain of $d^{k+1}$
and $d^{k+1} \circ d^{k}=0$.  These form a Hilbert complex $(W,d)$.
Letting $V^k \subset W^k$ be the domain of $d^k$, there is also an
associated domain complex $(V,d)$ having inner product $\la u,v
\ra_{V^k}=\la u,v \ra_{W^k}+\la d^k u, d^k v \ra_{W^{k+1}}$ and
associated norm $\| \cdot \|_V$.  The complex $...\rightarrow V^{k-1}
\rightarrow V^k \rightarrow V^{k+1} \rightarrow ...$ is then bounded
in the sense that $d^k$ is a bounded linear operator from $V^k$ into
$V^{k+1}$.


The kernel of $d^k$ is denoted by $\Z^k=\B^k \oplus \H^k$, where
$\B^k$ is the range of $d^{k-1}$ and $\H^k$ is the space of harmonic
forms $\B^{k \perp_W} \cap \Z^k$. The Hodge decomposition is an
orthogonal decomposition of $W^k$ into the range $\B^k$,
harmonic forms $\H^k$, and their orthogonal complement $\Z^{k
  \perp_W}$.  Similarly, the Hodge decomposition of $V^k$ is
\begin{equation}\label{eq2-0}
V^k = \B^k \oplus \H^k \oplus \Z^{k \perp},
\end{equation}
where henceforth we simply write $\Z^{k \perp}$ instead of $\Z^{k
  \perp_V}$ except as noted.  The dual complex consists of the same
spaces $W^k$, but now with increasing indices, along with the
differentials consisting of adjoints $d_k^*$ of $d^{k-1}$.  The domain
of $d_k^*$ is denoted by $V_k^*$, which is dense in $W^k$.

The Poincar\'{e} inequality also plays a fundamental role; it reads
\begin{equation} \label{eq2-1}
\|v\|_V \lesssim \|d^k v\|_W, ~v \in \Z^{k \perp}.  
\end{equation}
Here and in what follows, we write $a \lesssim b$ when $a \le Cb$ with
a constant $C$ that does not depend on essential quantities.  Finally,
we assume throughout that the complex $(W,d)$ satisfies the
compactness property described in \S3.1 of \cite{ArFaWi2010}.

The immediate goal of the finite element exterior calculus framework
presented in \cite{ArFaWi2010} is to solve the ``abstract Hodge
Laplacian'' problem given by $Lu=(dd^*+d^* d)u=f$.  $L:W^k \rightarrow
W^k$ is called the Hodge Laplacian in the context of the de Rham
complex (in geometry, this operator is often called Hodge-de Rham
operator).  This problem is uniquely solvable up to harmonic forms when $f
\perp \H^k$.  It may be rewritten in a well-posed weak mixed
formulation as follows.  Given $f \in W^k$, we let $p=P_{\H^k} f$ be
the harmonic portion of $f$ and solve $Lu=f-p$.  In order to ensure
uniqueness, we require $u \perp \H^k$.  Writing $\sigma=d^*u$, we thus
seek $(\sigma, u, p) \in V^{k-1} \times V^k \times \H^k$ solving
\begin{equation}\label{eq2-2}
\begin{array}{rcll}
  \langle \sigma, \tau \rangle- \langle d \tau, u \rangle &=& 0, & 
  \tau \in V^{k-1},
  \\ \langle d \sigma, v \rangle + \langle d u, dv \rangle + 
  \langle v, p \rangle &=& \langle f, v \rangle,~ & v \in V^k,
  \\ \langle u, q \rangle &=& 0 , & q \in \H^k.
\end{array}
\end{equation}

So-called inf-sup conditions play an essential role in analysis of mixed formulations.  We define $B(\sigma, u, p; \tau, v,q)=\la \sigma, \tau \ra-\la d\tau, u \ra+ \la d \sigma, v  \ra + \la du, dv \ra + \la v, p \ra - \la u, q \ra$, which is a bounded bilinear form on $[V^{k-1} \times V^k \times \H^k] \times [V^{k-1} \times V^k \times \H^k]$.   We will employ the following, which is Theorem 3.1 of \cite{ArFaWi2010}.

\begin{theorem}  Let $(W,d)$ be a closed Hilbert complex with domain complex $(V,d)$.  There exists a constant $\gamma>0$, depending only on the constant in the Poincar\'{e} inequality \eqref{eq2-1}, such that for any $(\sigma, u, p) \in V^{k-1} \times V^k \times \H^k$ there exists $(\tau, v, q) \in V^{k-1} \times V^k \times \H^k$ such that
\begin{equation}\label{eq2-3}
B(\sigma, u, p ; \tau, v, q) \ge \gamma ( \|\sigma\|_V  + \|u\|_V + \|p\|)(\|\tau\|_V + \|v\|_V + \|q\|).
\end{equation}
\end{theorem}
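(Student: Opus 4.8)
The plan is to prove \eqref{eq2-3} by exhibiting, for each given triple $(\sigma,u,p)$, a test triple $(\tau,v,q)$ that is a small explicit perturbation of $(\sigma,u,p)$ itself, and then verifying two estimates: a coercivity-type lower bound $B(\sigma,u,p;\tau,v,q)\gtrsim(\|\sigma\|_V+\|u\|_V+\|p\|)^2$ and a continuity bound $\|\tau\|_V+\|v\|_V+\|q\|\lesssim\|\sigma\|_V+\|u\|_V+\|p\|$; dividing the first by the second yields \eqref{eq2-3} with $\gamma$ depending only on the Poincar\'e constant $c_P$ in \eqref{eq2-1}. First I would record the ``diagonal'' value $B(\sigma,u,p;\sigma,u,p)=\|\sigma\|^2+\|du\|^2$, which controls $\|\sigma\|$ and $\|du\|$ but not $\|d\sigma\|$, not $\|p\|$, and not the remaining part of $\|u\|$. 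To capture the missing pieces, decompose $u$ by the Hodge decomposition \eqref{eq2-0} as $u=u_{\B}+u_{\H}+u_\perp$ with $u_{\B}\in\B^k$, $u_{\H}\in\H^k$, $u_\perp\in\Z^{k\perp}$; since $u_{\B},u_{\H}\in\Z^k$ we have $du_\perp=du$, so \eqref{eq2-1} gives $\|u_\perp\|_V\lesssim\|du\|$ and $u_\perp$ requires no further attention. Because $u_{\B}$ lies in the range of $d^{k-1}$, choose $\rho\in\Z^{k-1\perp}$ with $d\rho=u_{\B}$ and, again by \eqref{eq2-1}, $\|\rho\|_V\le c_P\|u_{\B}\|$.

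Then set
\begin{equation*}
\tau=\sigma-\epsilon\rho,\qquad v=u+d\sigma+p,\qquad q=p-u_{\H},
\end{equation*}
with $\epsilon>0$ a constant to be fixed in terms of $c_P$; these belong to $V^{k-1}$, $V^k$, and $\H^k$ respectively, using $d\circ d=0$, $d\sigma\in\B^k\subset V^k$, and $dp=0$. Next I would expand $B(\sigma,u,p;\tau,v,q)$ term by term. Using $\langle d\sigma,p\rangle=0$ (since $\B^k\perp_W\H^k$), $d(d\sigma)=0$, $dp=0$, $\langle d\rho,u\rangle=\langle u_{\B},u\rangle=\|u_{\B}\|^2$, $\langle v,p\rangle=\langle u,p\rangle+\|p\|^2$, and $-\langle u,q\rangle=-\langle u,p\rangle+\|u_{\H}\|^2$, the cross terms $\pm\langle d\sigma,u\rangle$ and $\pm\langle u,p\rangle$ cancel, leaving
\begin{equation*}
B(\sigma,u,p;\tau,v,q)=\|\sigma\|^2+\|d\sigma\|^2+\|du\|^2+\epsilon\|u_{\B}\|^2+\|u_{\H}\|^2+\|p\|^2-\epsilon\langle\sigma,\rho\rangle.
\end{equation*}

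The only genuinely delicate point, and the step I expect to be the main obstacle, is to absorb the indefinite term $-\epsilon\langle\sigma,\rho\rangle$ without destroying the positive terms: a weighted Young inequality gives $\epsilon|\langle\sigma,\rho\rangle|\le\epsilon c_P\|\sigma\|\,\|u_{\B}\|\le\tfrac{\epsilon c_P^2}{2}\|\sigma\|^2+\tfrac{\epsilon}{2}\|u_{\B}\|^2$, so fixing $\epsilon<2/c_P^2$ leaves
\begin{equation*}
B(\sigma,u,p;\tau,v,q)\gtrsim\|\sigma\|^2+\|d\sigma\|^2+\|du\|^2+\|u_{\B}\|^2+\|u_{\H}\|^2+\|p\|^2,
\end{equation*}
with a constant depending only on $c_P$. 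Finally I would reassemble the norms: since $\|u_\perp\|_V^2\lesssim\|du\|^2$, we get $\|u\|_V^2=\|u_{\B}\|^2+\|u_{\H}\|^2+\|u_\perp\|^2+\|du\|^2\lesssim$ the right-hand side above, while $\|\sigma\|_V^2=\|\sigma\|^2+\|d\sigma\|^2$ and $\|p\|^2$ are trivially bounded by it, whence $B(\sigma,u,p;\tau,v,q)\gtrsim\|\sigma\|_V^2+\|u\|_V^2+\|p\|^2\gtrsim(\|\sigma\|_V+\|u\|_V+\|p\|)^2$. For the continuity bound, $\|\tau\|_V\lesssim\|\sigma\|_V+\|u\|$, $\|v\|_V\lesssim\|u\|_V+\|\sigma\|_V+\|p\|$ (again using $d(d\sigma)=0$, $dp=0$), and $\|q\|\le\|p\|+\|u\|$; summing and combining with the lower bound produces \eqref{eq2-3}. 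Everything except the balancing of $\epsilon$ against $c_P$ in the cross term is routine bookkeeping with the Hodge decomposition \eqref{eq2-0} and the identity $d\circ d=0$.
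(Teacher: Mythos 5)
Your proof is correct and is essentially the argument of Arnold--Falk--Winther for their Theorem 3.1, which the paper cites rather than reproves: the same test triple $\tau=\sigma-\epsilon\rho$, $v=u+d\sigma+p$, $q=p-u_{\H}$ built from the Hodge decomposition of $u$ and a Poincar\'e preimage $\rho$ of $u_{\B}$, with the cross term $\epsilon\langle\sigma,\rho\rangle$ absorbed by Young's inequality. The bookkeeping (cancellation of $\pm\langle d\sigma,u\rangle$ and $\pm\langle u,p\rangle$, the bound $\|u_\perp\|_V\lesssim\|du\|$, and the continuity of the test triple) all checks out.
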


\subsection{Approximation of solutions to the abstract Hodge Laplacian}

Assuming that $(W,d)$ is a Hilbert complex with domain complex $(V,d)$
as above, we now choose a finite dimensional subspace $V_h^k \subset
V^k$ for each $k$.  We assume also that $d V_h^k \subset V_h^{k+1}$,
so that $(V_h^k,d)$ is a Hilbert complex in its own right and a
subcomplex of $(V,d)$.  It is important to note that while the
restriction of $d$ to $V_h^k$ acts as the differential for the
subcomplex, $d^*$ and the adjoint $d_h^*$ of $d$ restricted to $V_h^k$
do not coincide. The discrete adjoint $d_h^*$ does not itself play a
substantial role in our analysis, but the fact that it does not
coincide with $d^*$ should be kept in mind.

The Hodge decomposition of $V_h^k$ is written 
\begin{equation}\label{eq2-4}
V_h^k=\B_h^k \oplus \H_h^k \oplus \Z_h^{k\perp}.
\end{equation}
Here $\B_h^k=d V_h^{k}$, with similar definitions of $\H_h^k$ and
$\Z_h^{k \perp}$ where $\perp$ is in $V_h$.  This discrete Hodge
decomposition plays a fundamental role in numerical methods, but it
only partially respects the continuous Hodge decomposition
\eqref{eq2-0}.  In particular, we have:
\begin{align}\label{eq2-4-a}
\begin{aligned}
\B_h^k &\subset \B^k,
\\ \H_h^k & \subset \Z^k \hbox{ but } \H_h^k \not\subset \H^k,
\\ \Z_h^{k \perp} & \not\subset \Z^{k \perp}. 
\end{aligned}
\end{align}

Bounded cochain projections play an essential role in finite element exterior calculus.  We assume the existence of an operator $\pi_h: V^k \rightarrow V_h^k$ which is bounded in both the $W$-norm $\|\cdot\|$ and the $V$-norm $\|\cdot\|_V$ and which commutes with the differential:  $d^{k} \circ \pi_h^k=\pi_h^{k+1} \circ d^k$.  In contrast to the a priori analysis of \cite{ArFaWi2010}, our a posteriori analysis does not require that $\pi_h$ be a projection, that is, we do not require that $\pi_h$ act as the identity on $V_h$.  In more concrete situations we shall however require certain other properties that are not needed in a priori error analysis.  

Approximations to solutions to \eqref{eq2-2} are constructed as follows.  Let $(\sigma_h, u_h, p_h) \in
V_h^{k-1} \times V_h^k \times \H_h^k$ satisfy
\begin{equation}\label{eq2-5}
  \begin{array}{rcll}
    \langle \sigma_h, \tau_h \rangle- \langle d \tau_h, u_h \rangle &=& 0,
    & \tau_h \in V_h^{k-1},\\ 
    \langle d \sigma_h, v_h \rangle + \langle d u_h, d v_h \rangle + 
    \langle v_h, p_h \rangle &=& \langle f, v_h \rangle,~ & v_h \in V_h^k,
    \\ \langle u_h, q_h \rangle &=& 0 , & q_h \in \H_h^k.
  \end{array}
\end{equation}
Existence and uniqueness of solutions to this problem are guaranteed by our assumptions.   A discrete inf-sup condition analogous to \eqref{eq2-3} with constant $\gamma_h$ depending on stability constants of the projection operator $\pi_h$ but otherwise independent of $V_h$ is contained in \cite{ArFaWi2010}; we do not state it as we do not need it for our analysis.  In addition, Theorem 3.9 of \cite{ArFaWi2010} contains abstract error bounds:   So long as the subcomplex $(V_h,d)$ admits uniformly $V$-bounded cochain projections,
\begin{align}\label{eq2-6}
\begin{aligned}
\|\sigma-\sigma_h\|_V & + \|u-u_h\|_V + \|p-p_h\| 
\\ & \lesssim \inf_{\tau \in V_h^{k-1}} \|\sigma-\tau\|_V + \inf_{v \in V_h^k} \|u-v\|_V 
\\ & ~~~~+ \inf_{q \in V_h^k} \|p-q\|_V + \tilde{\mu} \inf_{v \in V_h^k} \|P_\B u-v\|_V,
\end{aligned}
\end{align}
where $\tilde{\mu}=\sup_{r \in \H^k, \|r\|=1} \|(I-\pi_h^k) r
\|$. We will use the notation $P_S$ for the orthogonal projection onto
the subspace $S$ as in the case of $P_\B$ above.

\subsection{Abstract a posteriori error analysis} We next begin an a
posteriori error analysis, remaining for the time being within the
framework of Hilbert complexes.  A working principle of a posteriori
error analysis is that if a corresponding a priori error analysis
employs a given tool, one looks for an a posteriori ``dual'' of that
tool in order to prove corresponding a posteriori results.  Thus while
the proof of \eqref{eq2-6} employs a {\it discrete} inf-sup condition,
we shall employ the {\it continuous} inf-sup condition \eqref{eq2-3}.
Writing $e_\sigma=\sigma-\sigma_h$, $e_u=u-u_h$, and $e_p=p-p_h$, we
use the triangle inequality and \eqref{eq2-3} to compute
\begin{align}  \label{eq2-7}
  \begin{aligned}
    \|e_\sigma\|_V+& \|e_u\|_V+\|e_p\|   \le   
    \left ( \|e_\sigma\|_V+\|e_u\|_V+\|p-P_\H p_h \| \right ) +\|P_\H
    p_h-p_h\|
    \\   & \le  \frac{1}{\gamma} \sup_{\substack{(\tau, v, q) \in V^{k-1}
        \times V^k \times \H^k, \\ \|\tau\|_V+\|v\|_V+\|q\|=1}}
    B(e_\sigma, e_u, p-P_\H p_h; \tau, v, q) + \|P_\H p_h-p_h\|
    \\ &  \le  \frac{1}{\gamma} \sup_{\substack{(\tau, v, q) \in V^{k-1}
        \times V^k \times \H^k, \\ 
        \|\tau\|_V+\|v\|_V+\|q\|=1}}   \Big ( \langle e_\sigma, \tau
    \rangle - \langle d \tau, e_u \rangle + \langle d e_\sigma, v
    \rangle + \langle d e_u, dv \rangle 
    \\ & ~~~~ + \langle v, e_p \rangle + \langle e_u, q \rangle \Big )   +
    (1+\frac{1}{\gamma}) \|P_\H p_h-p_h\|.  
    \end{aligned}
    \end{align}
Employing Galerkin orthogonality implied by subtracting the first two lines of \eqref{eq2-5} and \eqref{eq2-2} in order to insert $\pi_h \tau$ and $\pi_h v$ into \eqref{eq2-7} and then again employing \eqref{eq2-2} finally yields
    \begin{align}
    \begin{aligned}
     \|e_\sigma\|_V+& \|e_u\|_V+\|e_p\| 
     \\ &  \le \frac{1}{\gamma} \sup_{\substack{(\tau, v, q) \in V^{k-1}
        \times V^k \times \H^k, \\ 
        \|\tau\|_V+\|v\|_V+\|q\|=1}}   \Big ( \langle \sigma_h, \tau-\pi_h \tau
    \rangle - \langle d ( \tau-\pi_h \tau), u_h \rangle 
    \\ & ~~~~+ \langle f-d\sigma_h -p_h, v-\pi_h v  \rangle - \langle d u_h, d(v-\pi_h v) \rangle 
     + \langle e_u, q \rangle \Big )  \\ & ~~~~ +
    (1+\frac{1}{\gamma}) \|P_\H p_h-p_h\|.
  \end{aligned}
\end{align}

The terms $ \langle \sigma_h, \tau-\pi_h \tau    \rangle - \langle d ( \tau-\pi_h \tau), u_h \rangle$ and $ \langle f-d\sigma_h-p_h, v-\pi_h v  \rangle +  \langle d u_h, d(v-\pi_h v) \rangle$ in \eqref{eq2-7} can be attacked in concrete situations with adaptations of standard techniques for residual-type a posteriori error analysis, but no further progress can be made on this abstract level without further assumptions on the finite element spaces.  The terms $\langle e_u, q \rangle$ and $(1+\frac{1}{\gamma}) \|P_\H p_h-p_h\|$, on the other hand, are nonzero only when $\H_h^k \neq \H^k$.  In this case \eqref{eq2-5} is a generalized Galerkin method, and further abstract analysis is helpful in elucidating how these nonconformity errors may be bounded.  We carry out this analysis in the following subsection.

\subsection{Bounding the ``harmonic errors''} \label{subsec_harmonic}
We next lay groundwork for bounding the terms $\|p_h-P_\H p_h\|$ and $\sup_{q \in \H^k} \la e_u, q \ra$.  Since $p_h \in \H_h^k$, \eqref{eq2-4-a} and $\Z^k=\B^k \oplus \H^k$ imply that $p_h-P_\H p_h \in \B^k$.  Recalling that $v \in \B^k$ implies that $v=d \phi$ for some $\phi \in V^{k-1}$ and also that $P_\H p_h \in \H^k \perp \B^k$ yields
\begin{align}\label{eq2-8}
\|p_h-P_\H p_h\|=\sup_{v \in \B^k, \|v\|=1} \la p_h -P_\H p_h , v \ra =\sup_{\phi \in V^{k-1}, \|d\phi\|=1} \la p_h, d \phi \ra.  
\end{align}
The discrete Hodge decomposition \eqref{eq2-4} implies that $\pi_h^k d
\phi=d \pi_h^{k-1} \phi \in \B_h^k \perp \H_h^k \ni p_h$.  Also note
that by the Poincar\'e inequality \eqref{eq2-1}, $\sup_{\phi \in V^{k-1}, \|d \phi\|=1} \la p_h, d \phi \ra$ is uniformly equivalent to $\sup_{\phi \in V^{k-1}, \|\phi\|_V=1} \la p_h, d \phi \ra$.  Thus 
\begin{equation} \label{eq2-9}
\|p_h-P_\H p_h\|  \lesssim  \sup_{\phi \in V^{k-1}, \|\phi\|_V=1} \la p_h, d (\phi-\pi_h \phi) \ra.
\end{equation}
We do not manipulate \eqref{eq2-9} any further without making more
precise assumptions about the spaces and exterior derivative involved.  Recall that the goal of \eqref{eq2-9} is to measure the amount by which the discrete
harmonic function $p_h$ fails to be a {\it continuous} harmonic
function.  If $p_h$ were in fact in $\H^k$, we would have $d^* p_h=0$, which would immediately imply that the right-hand-side of \eqref{eq2-9} is 0.  In \eqref{eq2-9} we measure the degree by which this is {\it not} true by testing weakly with a test function $d \phi$, minus a discrete approximation to the test function.

Before bounding the term $\sup_{q \in \H^h, \|q\|=1} \la e_u, q \ra$ we consider the {\it gap} between $\H^k$ and $\H_h^k$.  Given closed subspaces $A,B$ of a Hilbert space $W$, let
\begin{align}\label{eq2-10}
  \delta(A,B)=\sup_{x \in A, \|x\|=1} {\rm dist}(x, B)=\sup_{x \in A,
    \|x\|=1} \|x-P_{B}x\|.
  \end{align}
The gap between the subspaces $A$ and $B$ is defined as 
\begin{align}  \label{eq2-11}
  {\rm gap}(A,B)=\max(\delta(A,B), \delta(B, A)).
\end{align}

In the situation below, we will require information about
$\delta(\H^k, \H_h^k)$, but are able to directly derive
a posteriori bounds only for $\delta(\H_h^k, \H^k)$.  Thus it is
necessary to understand the relationship between $\delta(A,B)$ and
$\delta(B,A)$.  
   
\begin{lemma} \label{lem_gap}
  Assume that $A$ and $B$ are subspaces of the Hilbert space $W$, both
  having dimension $n<\infty$. Then
  \begin{equation}    \label{eq2-12}
    \delta(A,B)=\delta(B,A)={\rm gap}(A,B).
  \end{equation}
\end{lemma}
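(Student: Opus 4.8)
The plan is to reduce the identity to a statement about the smallest singular value of the restricted orthogonal projections, exploiting crucially that $\dim A=\dim B=n<\infty$.

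First I would record the Pythagorean identity: for $x\in A$ with $\|x\|=1$, the vectors $P_Bx\in B$ and $x-P_Bx\in B^{\perp}$ are orthogonal, so $\|x-P_Bx\|^2=1-\|P_Bx\|^2$. Since ${\rm dist}(x,B)=\|x-P_Bx\|$, it follows that
\[
\delta(A,B)^2=1-\min_{x\in A,\ \|x\|=1}\|P_Bx\|^2,
\]
the minimum being attained because one is minimizing a continuous function on the (compact) unit sphere of the finite-dimensional space $A$. Writing $T:=P_B|_A\colon A\to B$ and letting $s_n(T)$ denote its smallest singular value, i.e.\ the square root of the smallest eigenvalue of $T^{*}T$, the Rayleigh quotient gives $\delta(A,B)^2=1-s_n(T)^2$, and symmetrically $\delta(B,A)^2=1-s_n(T')^2$ with $T':=P_A|_B\colon B\to A$.

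Next I would show $T'=T^{*}$. Regarding $A$ and $B$ as Hilbert spaces with the inner product inherited from $W$, for $x\in A$ and $y\in B$ one has $\langle Tx,y\rangle=\langle P_Bx,y\rangle=\langle x,P_By\rangle=\langle x,y\rangle$ (since $P_By=y$), while $\langle x,T'y\rangle=\langle x,P_Ay\rangle=\langle P_Ax,y\rangle=\langle x,y\rangle$ (since $P_Ax=x$); hence $\langle Tx,y\rangle=\langle x,T'y\rangle$ for all such $x,y$, so $T'=T^{*}$. It then remains to observe that $T$ and $T^{*}$ have the same smallest singular value. This is the one place where the equal-dimension hypothesis does real work: $T^{*}T$ and $TT^{*}$ always have the same nonzero eigenvalues with the same multiplicities, and since both act on $n$-dimensional spaces the multiplicity of the eigenvalue $0$ matches as well, so $T$ and $T^{*}$ have identical lists of singular values; in particular $s_n(T)=s_n(T^{*})$. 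Combining the displayed formulas gives $\delta(A,B)=\delta(B,A)$, and then ${\rm gap}(A,B)=\max(\delta(A,B),\delta(B,A))=\delta(A,B)$.

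I expect the only subtlety to be exactly this last step: without equal (finite) dimensions the conclusion fails --- e.g.\ if $A\subsetneq B$ then $\delta(A,B)=0$ but $\delta(B,A)=1$ --- so the bookkeeping of zero singular values must be done carefully. Everything else is routine linear algebra once the adjoint relation $(P_B|_A)^{*}=P_A|_B$ is in hand. An alternative formulation runs through the principal angles between $A$ and $B$, whose cosines are precisely the singular values of $P_B|_A$; this gives the same proof in slightly different language.
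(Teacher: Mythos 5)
Your proof is correct, and it takes a genuinely different route from the paper. The paper handles the statement by citing Kato (Theorem 6.34 of \cite{Ka76}) for the case $\delta(A,B)<1$ and then disposing of the degenerate case $\delta(A,B)=1$ by a separate hand computation with an orthonormal basis of $A$: if some $0\neq b\in B$ is orthogonal to $P_B(A)$, then $P_A b=\sum_i \langle P_B a_i,b\rangle a_i=0$, forcing $\delta(B,A)=1$ as well. Your argument instead identifies $\delta(A,B)^2=1-s_n(P_B|_A)^2$ via the Pythagorean identity, observes that $(P_B|_A)^{*}=P_A|_B$, and uses the fact that $T^{*}T$ and $TT^{*}$ on two $n$-dimensional spaces have identical eigenvalue lists including the multiplicity of zero. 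This is essentially the principal-angle formulation and is the mechanism underlying Kato's theorem anyway, but your version is self-contained, avoids the external citation, and treats the two cases uniformly rather than splitting off $\delta(A,B)=1$. You also correctly flag the one place where $\dim A=\dim B<\infty$ is indispensable (the matching of the zero singular values), and your counterexample $A\subsetneq B$ shows the hypothesis cannot be dropped. The only points worth stating explicitly in a polished write-up are the self-adjointness of $P_B$ (used in $\langle P_Bx,y\rangle=\langle x,P_By\rangle$) and the compactness of the unit sphere of $A$ guaranteeing attainment of the infimum; both are present in your sketch.
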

 
\begin{proof}The result is essentially found in \cite{Ka76}, Theorem 6.34, pp. 56-57. Assume first that $\delta(A,B)<1$.  The assumption that $\dim A=\dim B$ then implies that the nullspace of $P_B$ is 0 and that $P_B$ maps $A$ onto $B$ bijectively.  Thus Case i of Theorem 6.34 of \cite{Ka76} holds, and \eqref{eq2-12} follows from (6.51) of that theorem by noting that $\delta(A,B)=\|I-P_B\|_{(A,W)}=\|(I-P_B)P_A\|_{(W,W)}$.
 
If $\delta(A,B)=1$, then there is $0 \neq b \in B$ which is orthogonal
to $P_B(A)$.  Letting $\{a_i\}_{i=1,..,M}$ be an orthonormal basis for
$A$, we have $P_A b=\sum_{i=1}^M (a_i, b) a_i =\sum_{i=1}^M (P_B a_i,b
)a_i=0$, since $b \perp P_B(A)$.  Thus
$1=\|I-P_A\|_{(B,A)}=\delta(B,A)$, so that \eqref{eq2-12} holds in this
case also.  
\end{proof}

Thus we can bound ${\rm gap}(\H^k, \H_h^k)$
by bounding only $\delta(\H_h^k, \H^k)$, which we now turn our attention to.   First write $\delta(\H_h^k, \H^k)= \sup_{q_h \in \H_h^k, \|q_h\|=1} \|q_h-P_{\H^k} q_h\|$.  For a given $q_h \in \H_h^k$, we may employ exactly the same arguments as in \eqref{eq2-8} and \eqref{eq2-9} above to find
\begin{align} \label{eq2-13}
\|q_h-P_\H q_h\| \lesssim \sup_{ \phi \in V^{k-1}, \|\phi\|_V=1} \la q_h, d(\phi -\pi_h \phi) \ra.
\end{align}
We now let $\{q_1,...,q_M\}$ be an orthonormal basis for $\H_h^k$ and
assume that we have a posteriori bounds
\begin{align}\label{eq2-14}
\sup_{ \phi \in V^{k-1}, \|\phi\|_V=1} \la q_i, d(\phi -\pi_h \phi) \ra \le \mu_i, ~i=1,...,M.
\end{align}
We obtain such bounds for the de Rham complex below.  Given an arbitrary unit vector $q_h \in \H_h^k$, we write $q_h =\sum_{i=1}^M a_i q_i$, where $|\vec{a}|=1$.  Inserting this relationship
into \eqref{eq2-13} yields $\delta(\H_h^k, \H^k) \le \sup_{|\vec{a}|=1}  \sum_{i=1}^M a_i \mu_i$.  This expression is maximized by choosing $\vec{a}=\vec{\mu}/|\vec{\mu}|$, where $\vec{\mu}=\{\mu_1,.., \mu_M\}$.  Thus
\begin{align} \label{eq2-15}
\delta(\H_h^k, \H^k) \le |\vec{\mu}|.
\end{align}
%
Combining \eqref{eq2-15} with \eqref{eq2-12}, we thus also have
\begin{align}\label{eq2-16}
{\rm gap}(\H^k, \H_h^k) \le |\vec{\mu}|.
\end{align}

Now we turn our attention to bounding $\|P_\H u_h\|= \sup_{q \in \H^k, \|q\|=1} \la e_u, q \ra$.  Our analysis of this term is slightly unusual in that we suggest two possible approaches.  One is likely to be sufficient for most applications and is less computationally intensive.  The other more accurately reflects the actual size of the term at hand, but requires additional computational expense with possibly little practical payoff.  

We first describe the cruder approach.   Because $u_h \perp \H_h^k$, 
\begin{align}\label{eq2-17}
\begin{aligned}
\|P_\H u_h \|& =\sup_{q \in \H^k, \|q\|=1} \la q, u_h \ra = \sup_{q \in \H^k, \|q\|=1} \la q-P_{\H_h^k} q, u_h \ra 
 \\ & \le \delta( \H^k, \H_h^k) \|u_h\| = {\rm gap}(\H^k, \H_h^k) \|u_h\|.
\end{aligned}
\end{align}
\eqref{eq2-16} may then be used in order to bound ${\rm gap} (\H^k, \H_h^k)$.  

Next we describe the sharper approach.  Since $u_h \perp \H_h^k$, we have $u_h= \tilde{u}_h + u_h^\perp$,
where $\tilde{u}_h \in \B_h^k$ and $u_h^\perp \in \Z_h^{k\perp}$.
Since $\B_h^k \subset \B^k \perp \H^k$, $u_h^\perp \perp \H_h^k$, and
$\H_h^k$ and $\H^k$ are both perpendicular to $\Z^{k \perp}$, we thus have
for any $q \in \H^k$ with $\|q\|=1$ that
\begin{align}\label{eq2-18}
\begin{aligned}
\langle u_h, q \rangle & = \langle u_h^\perp, q \rangle
 = \langle u_h^\perp, q-P_{\H_h} q \rangle  = \langle u_h^\perp -P_{\Z^{\perp}} u_h^{\perp}, q-P_{\H_h} q \rangle 
\\ & \le \|u_h^\perp-P_{\Z^{k\perp}} u_h^\perp\| \|  q-P_{\H_h} q\| \le {\rm gap} (\H^k, \H_h^k) \|u_h^\perp-P_{\Z^{k\perp}} u_h^\perp\| .
\end{aligned}
\end{align} 
But
\begin{align} \label{eq2-19}
u_h^\perp-P_{\Z^{\perp}} u_h^\perp= P_{\B} u_h^\perp + P_{\H} u_h^\perp=P_\B u_h^\perp +
P_{\H} u_h.  
\end{align}
Here the relationship $P_{\H} u_h^\perp=P_{\H} u_h$ holds because $\B_h^k \subset \B^k$ and so $P_\H \tilde{u}_h=0$.  Thus $\|u_h^\perp-P_{\Z^\perp} u_h^\perp\| \le \|P_{\B} u_h^\perp\|+ \|P_\H u_h\|$.  $\|P_\B u_h^\perp\|$ may be bounded as in \eqref{eq2-9} and \eqref{eq2-13} above:
\begin{align}\label{eq2-20}
\begin{aligned}
\|P_\B u_h^\perp\| & = \sup_{\phi \in V^{k-1}, \|d \phi\|=1} \la u_h^\perp, d(\phi - \pi_h \phi) \ra 
\\ & \lesssim \sup_{\phi \in V^{k-1}, \|\phi\|_V=1} \la u_h^\perp, d(\phi -\pi_h \phi) \ra.
\end{aligned}
\end{align}

Assuming a posteriori bounds ${\rm gap}(\H^k, \H_h^k) \lesssim \mu$ and 
$\|P_{\B} u_h^\perp\| \lesssim \epsilon$, we thus have
\begin{align}\label{eq2-21}
\|P_{\H} u_h\| \lesssim \mu (\epsilon + \|P_\H u_h\|).
\end{align}
Inserting \eqref{eq2-17} into \eqref{eq2-21} then finally yields
\begin{align}\label{eq2-22}
\|P_{\H} u_h \| \lesssim \epsilon \mu + \mu^2 \|u_h\|.
\end{align}

We now discuss the relative advantages of \eqref{eq2-17} and \eqref{eq2-22}.  The corresponding term in the a priori bound \eqref{eq2-6} is $\tilde{\mu} \inf_{v \in V_h^k} \|P_\B u-v\|_V$, which is a bound for $\|P_{\H_h} u\|$ (note the symmetry between the a priori and a posteriori bounds).  The term $\tilde{\mu}$ (defined following \eqref{eq2-6}) is easily seen to be bounded by ${\rm gap}(\H^k, \H_h^k)$ at least in the case that $\pi_h$ is a $W$-bounded cochain projection.  Also, it is easily seen that $\tilde{\mu}$ is generally of the same or higher order than other terms in \eqref{eq2-6} when standard polynomial approximation spaces are used.  Carrying this over to the a posteriori context, \eqref{eq2-17} will yield a bound for $\|P_\H u_h\|$ that while crude is not likely to dominate the estimator or drive adaptivity in generic situations.  

If a sharper bound for $\|P_\H u_h\|$ proves desirable (e.g., if ${\rm gap}(\H^k, \H_h^k) \|u_h\|$ dominates the overall error estimator), then one can instead employ \eqref{eq2-22}.  This corresponds in the a priori setting to employing the term $\inf_{v \in V_h^k} \|P_\B u-v\|_V$ and is likely to lead to an asymptotically much smaller estimate for $\|P_\H u_h\|$.  However, computing the term $\epsilon$ in \eqref{eq2-22} requires computation of the discrete Hodge decomposition of $u_h$, which may add significant computational expense.  

\subsection{Summary of abstract bounds}

We summarize our results above in the following lemma.  

\begin{lemma}  Assume that $(W,d)$ is a Hilbert complex with subcomplex $(V_h, d)$ and commuting, $V$-bounded cochain operator $\pi_h:V \rightarrow V_h$, and in addition that $(\sigma, u, p)$ and $(\sigma_h, u_h, p_h)$ solve \eqref{eq2-2} and \eqref{eq2-5}, respectively.  Then for some $(\tau, v, q) \in V^{k-1} \times V^k \times \H^k$ with $\|\tau \|_V + \|v \|_V + \|q\|=1$ and some $\phi \in V^{k-1}$ with $\|\phi\|_V=1$, 
\begin{align}\label{eq2-23}
\begin{aligned}
\|e_\sigma\|_V& + \|e_u\|_V + \|e_p\|  \lesssim |\la e_\sigma, \tau-\pi_h \tau \ra - \la d(\tau-\pi_h \tau), e_u \ra| \\ & ~~+ | \la f-d \sigma_h -p_h, v-\pi_h v \ra - \la d u_h, d (v-\pi_h v) \ra | 
\\ & ~~~~+ | \la p_h, d (\phi-\pi_h \phi) \ra|  + \mu \|u_h^\perp -P_{\Z^\perp} u_h^\perp\|.
\end{aligned}
\end{align}
Here $\mu = (\sum_{i=1}^M \mu_i^2)^{1/2}$, where $\sup_{\phi \in V^{k-1}, \|\phi\|_V=1}  \la q_i, d(\phi - \pi_h \phi) \ra \lesssim \mu_i$ for an orthonormal basis $\{q_1, ...., q_M\}$ of $\H_h^k$.   For the last term in \eqref{eq2-23} we may either use the simple bound $\|u_h^\perp -P_{\Z^\perp} u_h^\perp\| \le \|u_h\|$ or employ the bound $\mu \|u_h^\perp -P_{\Z^\perp} u_h^\perp\| \lesssim \mu \epsilon + \mu^2 \|u_h\|$,  where 
\begin{align}\label{eq2-24}
\sup_{\phi \in V^{k-1}, \|\phi\|_V=1} \la u_h^\perp, d(\phi - \pi_h \phi) \ra \lesssim \epsilon.
\end{align}
\end{lemma}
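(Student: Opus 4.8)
The plan is to assemble the final Lemma as a bookkeeping exercise: every ingredient has already been derived in the preceding subsections, and what remains is to chain the estimates together and record the constants.

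First I would start from the chain of inequalities in \eqref{eq2-7} and its successor display, which already reduces $\|e_\sigma\|_V + \|e_u\|_V + \|e_p\|$ to a supremum over unit $(\tau,v,q)$ of
\[
\langle \sigma_h, \tau-\pi_h \tau\rangle - \langle d(\tau-\pi_h\tau), u_h\rangle + \langle f-d\sigma_h-p_h, v-\pi_h v\rangle - \langle du_h, d(v-\pi_h v)\rangle + \langle e_u, q\rangle
\]
plus $(1+\tfrac1\gamma)\|P_\H p_h - p_h\|$. Since the bound is only claimed up to $\lesssim$, I may absorb $\gamma$ into the implicit constant, fix the maximizing $(\tau,v,q)$ (it exists because the unit sphere in the relevant finite-dimensional-in-$q$ / closed subspace argument is compact, or simply by taking a near-maximizer and passing to the $\lesssim$), and split the supremum of a sum into a sum of absolute values of the individual pieces. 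The first two groups of terms become the first two lines on the right of \eqref{eq2-23} verbatim.

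Second, I would handle the two ``harmonic'' contributions. The term $\|P_\H p_h - p_h\|$ is exactly \eqref{eq2-9}: it equals $\sup_{\phi\in V^{k-1},\|\phi\|_V=1}\langle p_h, d(\phi-\pi_h\phi)\rangle$ up to the Poincaré constant, so choosing the maximizing $\phi$ yields the third term $|\langle p_h, d(\phi-\pi_h\phi)\rangle|$. For $\langle e_u, q\rangle = \langle u_h, q\rangle = \langle P_\H u_h, q\rangle$ (using $u\perp\H^k$ and then $\|q\|=1$), I invoke \eqref{eq2-18}: it is bounded by $\mathrm{gap}(\H^k,\H_h^k)\,\|u_h^\perp - P_{\Z^\perp}u_h^\perp\|$. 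Then \eqref{eq2-16} together with $\mu = |\vec\mu| = (\sum \mu_i^2)^{1/2}$ gives $\mathrm{gap}(\H^k,\H_h^k)\lesssim\mu$, producing the last term $\mu\|u_h^\perp - P_{\Z^\perp}u_h^\perp\|$. The two alternative bounds for this last factor are then just restatements: the trivial one uses $\|u_h^\perp - P_{\Z^\perp}u_h^\perp\| = \|P_\B u_h^\perp + P_\H u_h\|\le\|u_h\|$ (since $u_h^\perp$ is already a unit-norm-bounded piece of $u_h$ and projections are contractions), and the sharp one is precisely \eqref{eq2-22}, derived from \eqref{eq2-19}--\eqref{eq2-21} with $\epsilon$ as in \eqref{eq2-24}.

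The main point requiring a little care — though it is not a deep obstacle — is the justification that one may replace the various suprema by a single fixed $(\tau,v,q)$ and a single fixed $\phi$ ``realizing'' the bound, as the statement asserts. Strictly these suprema need not be attained simultaneously, but since we only claim $\lesssim$, I would phrase it as: there exist $(\tau,v,q)$ and $\phi$ for which the displayed right-hand side is within a factor $2$ (say) of the true suprema, and fold that factor into the implicit constant. Alternatively one keeps the suprema explicitly; the paper's phrasing ``for some $(\tau,v,q)\dots$ and some $\phi\dots$'' is understood in this near-maximizer sense. No new estimate is needed — the lemma is a consolidation of \eqref{eq2-7}, \eqref{eq2-9}, \eqref{eq2-16}, \eqref{eq2-18}, and \eqref{eq2-22}, and the proof is essentially one sentence pointing to each.
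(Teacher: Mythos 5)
Your proposal is correct and coincides with the paper's intended argument: the paper states this lemma explicitly as a summary of the preceding subsections and gives no separate proof, so the consolidation of \eqref{eq2-7} and its successor display with \eqref{eq2-9}, \eqref{eq2-16}--\eqref{eq2-18}, and \eqref{eq2-22}, together with the near-maximizer reading of ``for some $(\tau,v,q)$,'' is exactly what is meant. The only cosmetic point is that the first line of \eqref{eq2-23} is written in terms of $e_\sigma,e_u$ rather than $\sigma_h,u_h$; these agree up to sign by the first equation of \eqref{eq2-2} applied to $\tau-\pi_h\tau$, which is harmless under the absolute values.
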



\section{The de Rham complex and commuting quasi-interpolants} \label{sec_derham}

As above, we for the most part follow \cite{ArFaWi2010} in our notation.  Also as above, we shall often be brief in our description of concepts contained in \cite{ArFaWi2010} and refer the reader to \S4 and \S6 of that work for more detail.  

\subsection{The de Rham complex} \label{subsec3-1}
Let $\Omega$ be a bounded Lipschitz polyhedral domain in $\mathbb{R}^n$, $n \ge 2$.   Let $\Lambda^k(\Omega)$ represent the space of smooth $k$-forms on $\Omega$.  $\Lambda^k(\Omega)$ is endowed with a natural $L_2$ inner product $\la \cdot, \cdot \ra$ and $L_2$ norm $\| \cdot \|$ with corresponding space $L_2 \Lambda^k(\Omega)$.  Letting also $d$ be the exterior derivative, $H \Lambda^k(\Omega)$ is then the domain of $d^k$ consisting of $L_2$ forms $\Omega$ for which $d \omega \in L_2 \Lambda^{k+1}(\Omega)$; we denote by $\|\cdot \|_H$ the associated graph norm.  $(L_2 \Lambda^k(\Omega), d)$ forms a Hilbert complex (corresponding to $(W,d)$ in the abstract framework of the preceding section) with domain complex 
\begin{align}\label{eq3-1}
0 \, {\rightarrow}\,  H \Lambda^0(\Omega) \overset{d}{\rightarrow} H \Lambda^1(\Omega) \overset{d}{\rightarrow} \cdots \overset{d}{\rightarrow} H \Lambda^n(\Omega) \rightarrow 0
\end{align} 
corresponding to $(V,d)$ above.   In addition, we denote by $W_p^r \Lambda^k(\Omega)$ the corresponding Sobolev spaces of forms and set $H^r \Lambda^k(\Omega) = W_2^r \Lambda^k (\Omega)$.  Finally, for $\omega \subset \mathbb{R}^n$, we let $\|\cdot\|_\omega=\|\cdot \|_{L_2 \Lambda^k(\omega)}$ and $\|\cdot \|_{H, \omega}=\|\cdot \|_{H \Lambda^k(\omega)}$; in both cases we omit $\omega$ when $\omega=\Omega$.    

Given a mapping $\phi:\Omega_1 \rightarrow \Omega_2$, we denote by $\phi^*\omega \in \Lambda^k(\Omega_1)$ the pullback of $\omega \in \Lambda^k(\Omega_2)$, i.e., 
\begin{align}
\label{eq3-1-aa}
(\phi^* \omega)_x (v_1, ..., v_k)=\omega_{\phi(x)} (D \phi_x (v_1),..., D \phi_x(v_k)). 
\end{align}
The trace $\trace$ is the pullback of $\omega$ from $\Lambda^k(\Omega)$ to $\Lambda^k (\partial \Omega)$ under the inclusion.  $\trace$ is bounded as an operator $H \Lambda^k(\Omega) \rightarrow H^{-1/2} \Lambda^k (\partial \Omega)$ and $H^1 \Lambda^k (\Omega) \rightarrow H^{1/2} \Lambda^k (\partial \Omega)$, and thus also $H^1\Lambda^k(\Omega) \rightarrow L_2 \Lambda^k(\partial \Omega)$.  We may now define $\mathring{H} \Lambda^k(\Omega)=\{\omega \in H\Lambda^k(\omega):\trace \omega=0 \hbox{ on } \partial \Omega\}$.   In addition, we define the space $H_0^1\Lambda^k(\Omega)$ as the closure of $C_0^\infty \Lambda^k(\Omega)$ in $H^1 \Lambda^k(\Omega)$.  $H_0^1\Lambda^k(\Omega)$ essentially consists of forms which are 0 in every component on $\partial \Omega$, which is in general a stricter condition than $\trace \omega=0$.  

The wedge product is denoted by $\wedge$.  The Hodge star operator is denoted by $\star$ and for $\omega \in \Lambda^k$, $\mu \in \Lambda^{n-k}$ satisfies
\begin{align} \label{eq3-1-a} 
\omega \wedge \mu = \la \star \omega, \mu \ra {\rm vol}, ~~ \int_{\Omega_0} \omega \wedge \mu = \la \star \omega, \mu \ra_{L_2\Lambda^{n-k}(\Omega_0)}.
\end{align}
$\star$ is thus an isometry between $L_2\Lambda^k$ and $L_2 \Lambda^{n-k}$.  The coderivative operator $\delta:\Lambda^k \rightarrow \Lambda^{k-1}$ is defined by 
\begin{align} \label{eq3-2}
\star \delta \omega = (-1)^k d \star \omega.
\end{align}
Applying Stokes' theorem leads to the integration-by-parts formula
\begin{align}\label{eq3-3}
\la d \omega, \mu \ra = \la \omega, \delta \mu \ra + \int_{\partial \Omega} \trace \omega \wedge \trace \star \mu, ~\omega \in H \Lambda^{k-1},~ \mu \in H^1 \Lambda^k.  
\end{align}
The coderivative coincides with the abstract codifferential introduced in \S \ref{sec2-1} when $\trace_{\partial \Omega} \star \mu=0$.  That is, the domain of the adjoint $d^*$ of $d$ is the space $\mathring{H}^* \Lambda^k(\Omega)$ consisting of forms $\mu \in L_2\Lambda^k$ whose weak coderivative is in $L_2 \Lambda^{k-1}$ and for which $\trace \star \mu =0$.  We will also use the space $H^* \Lambda^k=\star(H \Lambda^{n-k})$ consisting of $L_2$ forms whose weak codifferential lies in $L_2$; note that $v \in H^* \Lambda^k$ implies that $\trace \star v \in H^{-1/2}$.  

The Hodge decomposition $L_2 \Lambda^k(\Omega)=\B^k \oplus \H^k \oplus \B_k^*$ consists of the range $\B^k=\{d \varphi : \varphi \in H \Lambda^{k-1}(\Omega)\}$, harmonic forms $\H^k=\{\omega \in H \Lambda^k(\Omega): d \omega=0, ~\delta \omega=0,~\trace \star \omega=0\}$, and range $\B_k^*=\{\delta \omega: \omega \in \mathring{H}^* \Lambda^{k+1}(\Omega)\}$ of $\delta$.  $\dim \H^k$ is the $k$-th Betti number of $\Omega$.  The mixed Hodge Laplacian problem corresponding to \eqref{eq2-2} now reads:  Find $(\sigma, u, p) \in H \Lambda^{k-1} \times H \Lambda^k \times \H^k$ satisfying
\begin{eqnarray}
\sigma & =&  \delta u, ~ d \sigma + \delta du=f-p \hbox{ in } \Omega,  \label{eq3-4}
\\ \trace \star u &=& 0, ~ \trace \star d u = 0 \hbox{ on } \partial \Omega,  \label{eq3-5}
\\ u &\perp& \H^k. \label{eq3-6}
\end{eqnarray}
The boundary conditions \eqref{eq3-5} are enforced naturally in the weak formulation \eqref{eq2-2} and so do not need to be built into the function spaces for the variational form.  The additional boundary conditions
\begin{align} \label{eq3-7}
\trace \star \sigma=0, ~\trace \star \delta d u=0 \hbox{ on } \partial \Omega
\end{align}
are also satisfied.  To see this, note that $d$ and $\trace$ commute since $\trace$ is a pullback, and that $\trace \star \sigma$ and $\trace \star \delta du$ are both well defined in $H^{-1/2}$ since $\delta \sigma=0$ and $\delta \delta d u=0$ imply that $\sigma, \delta du \in H^*$.  Thus by \eqref{eq3-2}, $\trace \star \sigma= \trace (-1)^k d \star u=(-1)^k d \hspace{2pt} \trace \star u=0$.  Similarly, $\trace \star \delta d u=\trace (-1)^k d \star du= (-1)^k d \trace \star d u=0$.  These relationships are roughly akin to noting that for a scalar function $u$, the boundary condition $u=0$ on $\partial \Omega$ implies that the tangential derivatives of $u$ along $\partial \Omega$ are also 0.  

We also consider the Hodge Laplacian with essential boundary conditions, that is:  Find $(\sigma, u, p) \in \mathring{H} \Lambda^{k-1} \times \mathring{H} \Lambda^k \times \mathring{\H}^k$ satisfying
\begin{eqnarray}
\sigma & =&  \delta u, ~ d \sigma + \delta du=f-p \hbox{ in } \Omega,  \label{eq3-7a}
\\ \trace \sigma &=& 0, ~ \trace u = 0 \hbox{ on } \partial \Omega,  \label{eq3-7b}
\\ u &\perp& \mathring{\H}^k. \label{eq3-7c}
\end{eqnarray}
This is the Hodge Laplace problem for the de Rham sequence \eqref{eq3-1} with each instance of $H\Lambda^k(\Omega)$ replaced by $\mathring{H} \Lambda^k(\Omega)$.  Here we have denoted the corresponding parts of the Hodge decomposition using similar notation, e.g., $\mathring{\H}^k = \{ \omega \in \mathring{\Z}^k | \langle \omega, \mu \rangle = 0, ~ \mu \in \mathring{\B}^k \}$.  


\subsection{Finite element approximation of the de Rham complex}  Let $\T_h$ be a shape-regular simplicial decomposition of $\Omega$.  That is, for any $K_1, K_2 \in \T_h$, $\overline{K}_1 \cap \overline{K}_2$ is either empty or a complete subsimplex (edge, face, vertex, etc.) of both $K_1$ and $K_2$, and in addition all $K \in \T_h$ contain and are contained in spheres uniformly equivalent to $h_K :={\rm diam}(K)$.  

Denote by $(V_h, d)$ any of the complexes of finite element differential forms consisting of $\mathcal{P}_r$ and $\mathcal{P}_r^-$ spaces described in \S5 of \cite{ArFaWi2010}.   We do not give a more precise definition as we only use properties of these spaces which are shared by all of them.  The finite element approximation to the mixed solution $(\sigma, u, p)$ of the Hodge Laplacian problem is denoted by $(\sigma_h, u_h, p_h) \in V_h^k \times V_h^{k-1} \times \H_h^k$ and is taken to solve \eqref{eq2-5}, but now within the context of finite element approximation of the de Rham complex.  In order to solve \eqref{eq3-7a}--\eqref{eq3-7c} we naturally employ spaces $\mathring{V}_h^k = V_h^k \cap \mathring{H} \Lambda^k(\Omega)$ of finite element differential forms.  

\subsection{Regular decompositions and commuting quasi-interpolants} \label{subsec_interps}


We also employ a regular decomposition of the form $\omega= d \varphi + z$, where $\omega \in H \Lambda$ only, but $\varphi, z \in H^1$.  In the context of Maxwell's equations the term ``regular decomposition'' first appeared in the numerical analysis literature in the survey  \cite{Hip02} by Hiptmair (although similar results were previously available in the analysis literature).  Published at about the same time, the paper \cite{PZ02} of Pasciak and Zhao contains a similar result for $H(\curl)$ spaces which is also often cited in this context.  Below we rely on the paper \cite{MiMiMo08} of Mitrea, Mitrea, and Monniaux, which contains regularity results for certain boundary value problems for differential forms that may easily be translated into regular decomposition statements.   Recent work of Costabel and McIntosh \cite{CoMc10} contains similar results for forms, though with handling of boundary conditions that seems slightly less convenient for our purposes.  

We first state a lemma concerning the bounded invertibility of $d$; this is a special case of Theorem 1.5 of \cite{MiMiMo08}.  

\begin{lemma} \label{lem3-0}
Assume that $B$ is a bounded Lipschitz domain in $\mathbb{R}^n$ that is homeomorphic to a ball.  Then the boundary value problem $d\varphi=g \in L_2 \Lambda^k(B)$ in $B$, $\trace \varphi=0$ on $\partial B$ has a solution $\varphi \in H_0^1 \Lambda^{k-1} (B)$ with $\|\varphi\|_{H^1 \Lambda^{k-1} (B)} \lesssim \|g\|_B$ if and only if $dg=0$ in $B$, and in addition,  $\trace g=0$ on $\partial B$ if $0 \le k \le n-1$ and $\int_B g=0$ if $k=n$.  
\end{lemma}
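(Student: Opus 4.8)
The plan is to obtain the statement from Theorem~1.5 of \cite{MiMiMo08}, after first disposing of the elementary ``only if'' direction directly.

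For necessity, suppose $\varphi \in H_0^1 \Lambda^{k-1}(B)$ solves $d\varphi = g$ with $\trace \varphi = 0$. Then $dg = d(d\varphi) = 0$ because $d \circ d = 0$, so in particular $g \in H\Lambda^k(B)$ and its trace is well defined in $H^{-1/2}\Lambda^k(\partial B)$. Since $\trace$ is a pullback (cf.\ \eqref{eq3-1-aa}) it commutes with $d$, so when $0 \le k \le n-1$ we get $\trace g = \trace d\varphi = d\,\trace\varphi = 0$. When $k = n$, the integration-by-parts formula \eqref{eq3-3} (which is Stokes' theorem) gives $\int_B g = \int_B d\varphi = \int_{\partial B} \trace \varphi = 0$. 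This establishes all the stated necessary conditions.

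For sufficiency I would invoke Theorem~1.5 of \cite{MiMiMo08}, whose conclusion is exactly the existence of a solution $\varphi \in H_0^1 \Lambda^{k-1}(B)$ of $d\varphi = g$ with $\|\varphi\|_{H^1\Lambda^{k-1}(B)} \lesssim \|g\|_B$, under hypotheses expressed through the de Rham cohomology of $B$ relative to $\partial B$. Since $B$ is homeomorphic to a ball it is contractible, hence by Lefschetz duality $H^j(B,\partial B) \cong H_{n-j}(B)$ vanishes for $0 \le j < n$ and is one-dimensional for $j = n$. Feeding this into the general statement collapses the obstruction list to ``$g$ is closed and exact in the relative complex,'' which unwinds to $dg = 0$ together with $\trace g = 0$ when $1 \le k \le n-1$, and --- because for $k = n$ both $dg$ and $\trace g$ vanish automatically for degree reasons --- to the single scalar constraint pairing $g$ against the generator of $H^n(B,\partial B)$, namely $\int_B g = 0$. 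The degenerate case $k = 0$ is consistent as well: then $\Lambda^{k-1}$ is trivial, while $dg = 0$ and $\trace g = 0$ force $g \equiv 0$ on the connected set $B$, and $\varphi = 0$ works. If the quantitative bound is not stated outright in \cite{MiMiMo08}, it follows from the open mapping theorem, since $d$ is then a continuous surjection from $H_0^1\Lambda^{k-1}(B)$ onto the closed subspace of admissible data.

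The main obstacle here is bookkeeping rather than analysis: one must carefully match the boundary-condition conventions and the indexing of (absolute versus relative) cohomology groups used in \cite{MiMiMo08} to the notation of the present paper, and confirm that ``homeomorphic to a ball'' --- rather than some stronger smoothness or bi-Lipschitz hypothesis --- suffices to deliver the purely topological input (triviality of the relevant cohomology) that the cited theorem actually requires in the Lipschitz category.
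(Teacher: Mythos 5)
Your proposal matches the paper's treatment: the paper gives no proof at all, simply asserting that the lemma is a special case of Theorem~1.5 of \cite{MiMiMo08}, which is exactly the citation you lean on for the substantive (sufficiency) direction. Your additional verification of the elementary ``only if'' direction via $d\circ d=0$, commutation of $\trace$ with $d$, and Stokes' theorem, together with the cohomological bookkeeping, is correct and only fills in details the paper omits.
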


Employing Lemma \ref{lem3-0}, we obtain the following regular decomposition result.  

\begin{lemma} \label{lem3-0-b}
Assume that $\Omega$ is a bounded Lipschitz domain in $\mathbb{R}^n$, and let $0 \le k \le n-1$.  Given $v \in H \Lambda^k(\Omega)$, there exist $\varphi \in H^1 \Lambda^{k-1} (\Omega)$ and $z \in H^1 \Lambda^k(\Omega)$ such that $v = d \varphi + z$, and 
\begin{align}\label{eq3-8-a}
\|\varphi \|_{H^1 \Lambda^{k-1}(\Omega)} + \|z\|_{H^1 \Lambda^k(\Omega)} \lesssim \|v\|_{H \Lambda^k(\Omega)}.
\end{align}
Similarly, if $v \in \mathring{H} \Lambda^k(\Omega)$, then there exist $\varphi \in H_0^1 \Lambda^{k-1}(\Omega)$ and $z \in H_0^1 \Lambda^k(\Omega)$ such that $v = d \varphi + z$ and \eqref{eq3-8-a} holds.  In the case $k=n$, $H \Lambda^n(\Omega)$ is identified with $L_2 \Lambda^n(\Omega)$.   The same results as above hold with the exception that $z \in L_2 \Lambda^n (\Omega)$ only and satisfies $\|z\|_{L_2 \Lambda^n(\Omega)} \lesssim \|v\|_{H \Lambda^k(\Omega)}$.  
\end{lemma}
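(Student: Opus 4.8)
The plan is to reduce the global statement on $\Omega$ to the local model situation of Lemma \ref{lem3-0} on a ball. The obstacle is that Lemma \ref{lem3-0} solves $d\varphi=g$ only when $g$ is closed (and satisfies the appropriate boundary/mean-value condition), whereas a general $v\in H\Lambda^k(\Omega)$ is not closed; moreover $\Omega$ itself need not be homeomorphic to a ball. Both difficulties are handled by a ``double extension'' trick: enlarge the domain, extend $v$ to it, and then apply Lemma \ref{lem3-0} to the \emph{exterior derivative} of the extension rather than to $v$ directly.

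First I would fix a ball $B\supset\overline\Omega$ with $B$ homeomorphic to a ball (trivially true), and construct a bounded extension operator $E:H\Lambda^k(\Omega)\to H\Lambda^k(B)$, so that $g:=d(Ev)\in L_2\Lambda^{k+1}(B)$ with $\|g\|_B\lesssim\|v\|_{H\Lambda^k(\Omega)}$. Since $g=d(Ev)$ is exact on $B$, it is in particular closed, $dg=0$. When $0\le k\le n-1$ so that $k+1\le n$, apply Lemma \ref{lem3-0} at form-degree $k+1$ to obtain $\psi\in H^1_0\Lambda^k(B)$ with $d\psi=g=d(Ev)$ on $B$ and $\|\psi\|_{H^1\Lambda^k(B)}\lesssim\|g\|_B\lesssim\|v\|_{H\Lambda^k(\Omega)}$. (The hypotheses of Lemma \ref{lem3-0} for $g$ — namely $\trace g=0$ on $\partial B$ when $k+1\le n-1$, or $\int_B g=0$ when $k+1=n$ — hold automatically because $g=d(Ev)$ with $Ev\in H\Lambda^k(B)$: Stokes' theorem gives $\int_B d(Ev)=\int_{\partial B}\trace(Ev)$, and one arranges $\trace(Ev)=0$ on $\partial B$ by choosing $E$ with image in $\mathring H\Lambda^k(B)$, which is possible since $\Omega\Subset B$.) Now set, on $\Omega$, $z:=(Ev-\psi)|_\Omega$. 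Then $dz=d(Ev)-d\psi=g-g=0$ on $B$, so $z$ is closed in $H^1\Lambda^k(\Omega)$ — but being closed is not yet what we want; rather we want $z\in H^1$, which holds since $\psi\in H^1$ and we additionally need $Ev\in H^1$. This forces a correction: instead apply the extension/regularization so that the $H^1$-piece is carried by $\psi$ and the remaining closed piece is written as $d\varphi$. Concretely, since $z=Ev-\psi$ is closed on $B$ and $B$ is homeomorphic to a ball with trivial topology, apply Lemma \ref{lem3-0} once more (at form-degree $k$, to the closed form $z$, after checking the boundary/mean conditions on $\partial B$) to obtain $\varphi\in H^1_0\Lambda^{k-1}(B)$ with $d\varphi=z$ on $B$ and $\|\varphi\|_{H^1\Lambda^{k-1}(B)}\lesssim\|z\|_B\lesssim\|v\|_{H\Lambda^k(\Omega)}$. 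Then on $\Omega$ we have $v=Ev|_\Omega=(\psi+d\varphi)|_\Omega$, i.e. $v=d\varphi+z'$ with $z':=\psi|_\Omega\in H^1\Lambda^k(\Omega)$, and \eqref{eq3-8-a} follows by summing the two norm bounds.

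For the essential-boundary-condition version, if $v\in\mathring H\Lambda^k(\Omega)$ then $\trace v=0$ on $\partial\Omega$, so the extension by zero $\tilde v$ of $v$ to $B$ already lies in $H\Lambda^k(B)$ (no extension operator is needed, and $\tilde v$ vanishes near $\partial B$); running the same two applications of Lemma \ref{lem3-0} on $B$ yields $\varphi\in H^1_0\Lambda^{k-1}(B)$ and $\psi\in H^1_0\Lambda^k(B)$; restricting and using that these lie in $H^1_0$ of $B\supset\overline\Omega$, their restrictions to $\Omega$ lie in $H^1_0\Lambda^{\bullet}(\Omega)$ because a function in $H^1_0(B)$ restricts to $H^1_0$ on any Lipschitz subdomain whose closure is contained in $B$ — actually one must be slightly careful here and instead keep track of the trace on $\partial\Omega$ directly; since $\tilde v$ vanishes identically on $B\setminus\Omega$, one can instead cut off $\varphi,\psi$ by a fixed smooth function equal to $1$ on $\overline\Omega$ and supported in $B$, preserving the $H^1_0$ property on $\Omega$ and the identity $v=d\varphi+z$ on $\Omega$. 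Finally, the case $k=n$ is handled identically except that Lemma \ref{lem3-0} is only invoked once, at degree $n$ (to solve $d\varphi=\text{(closed $n$-form)}$ requires the mean-value condition $\int_B(\cdot)=0$, which holds for $d(Ev)$ by Stokes as above); there is no ``$\psi$'' step since one cannot regularize an $n$-form this way, so one only obtains $z\in L_2\Lambda^n(\Omega)$ with the stated $L_2$ bound, matching the statement. The main obstacle throughout is the careful bookkeeping of which conditions ($\trace g=0$ versus $\int_B g=0$) are needed to invoke Lemma \ref{lem3-0} at each degree and verifying they hold for the exact form $d(Ev)$; once that is in hand the rest is a routine concatenation of bounded operators.
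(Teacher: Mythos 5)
Your argument for the natural boundary condition case is essentially the paper's: extend $v$ to $Ev$ supported in a ball $B$ compactly containing $\Omega$, solve $d\psi=d(Ev)$ for $\psi\in H_0^1\Lambda^k(B)$ by Lemma \ref{lem3-0} applied at degree $k+1$, then solve $d\varphi=Ev-\psi$ at degree $k$; the compatibility checks you perform ($d\,\trace=\trace d$, Stokes for the mean-value condition) are the right ones, and your $\psi$ is exactly the paper's $z$. The genuine gap is in the essential boundary condition case. After extending $v\in\mathring{H}\Lambda^k(\Omega)$ by zero and solving on the large ball $B$, the forms $\varphi$ and $\psi$ you obtain lie in $H_0^1$ \emph{of $B$}, and nothing forces their restrictions to $\Omega$ to vanish on $\partial\Omega$: $\psi$ solves $d\psi=d\tilde v$ with $\trace\psi=0$ only on $\partial B$, and it need not vanish on $B\setminus\Omega$ even though $\tilde v$ does. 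Your proposed repair --- multiplying $\varphi,\psi$ by a cutoff equal to $1$ on $\overline\Omega$ and supported in $B$ --- changes nothing on $\Omega$, so it cannot restore the $H_0^1(\Omega)$ property; a cutoff supported inside $\Omega$ would instead destroy the identity $v=d\varphi+z$ near $\partial\Omega$ and introduce a $d\chi\wedge\varphi$ commutator. The paper avoids this by never leaving $\Omega$ in the essential case: when $\Omega$ is homeomorphic to a ball it applies Lemma \ref{lem3-0} with $B=\Omega$ directly, so that $H_0^1(\Omega)$ is part of the lemma's conclusion, and for general $\Omega$ it uses a finite covering $\{\Omega_i\}$ with a subordinate partition of unity, solving $dz_i=d(\chi_i v)$ in $H_0^1\Lambda^k(\Omega_i\cap\Omega)$ and $d\varphi_i=\chi_i(v-z)$ in $H_0^1\Lambda^{k-1}(\Omega_i\cap\Omega)$ and summing; each local piece extends by zero to an element of $H_0^1$ on all of $\Omega$.

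A second, smaller defect is the case $k=n$. There $d(Ev)=0$ trivially, so the condition to verify is not one on $d(Ev)$; it is the mean-value condition $\int_B(Ev-z)=0$ for the $n$-form fed to Lemma \ref{lem3-0}, and this forces the explicit choice $z=\bigl(|B|^{-1}\int_B Ev\bigr)\,{\rm vol}$, which is precisely why $z$ is only an $L_2$ (indeed constant) $n$-form rather than an output of the lemma. Your appeal to ``Stokes as above'' for $d(Ev)$ does not supply this step; the choice of $z$ is the whole content of that case.
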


\begin{proof}

We first consider the case $v \in H \Lambda^k(\Omega)$.  By Theorem A of \cite{MiMiSh08}, the assumption that $\partial \Omega$ is Lipschitz implies the existence of a bounded extension operator $E: H \Lambda^k (\Omega) \rightarrow H \Lambda^k (\mathbb{R}^n)$.  Without loss of generality, we may take $E \omega$ to have compact support in a ball $B$ compactly containing $\Omega$, since if not we may multiply $E v$ by a fixed smooth cutoff function that is 1 on $\Omega$ and still thus obtain an $H\Lambda$-bounded extension operator.   Assuming that $0 \le k \le n-1$, we solve $dz = d Ev$ for $z \in H_0^1(B)$ and $d \varphi = Ev - z$ for $\varphi \in H_0^1 \Lambda^{k-1}(B)$, as in Lemma \ref{lem3-0}.  The necessary compatibility conditions may be easily verified using $d \circ d= 0$, $d \trace = \trace d$, and $d Ev = dz$.    Restricting $\varphi$ and $z$ to $\Omega$, we obtain \eqref{eq3-8-a} by employing the boundedness of $E$ along with Lemma \ref{lem3-0}.  In the case $k=n$, we let $z =  (|B|^{-1} \int_B Ev) {\rm vol}$, where ${\rm vol}$ is the volume form.  We then have $\int_B (Ev-z) = 0$, and proceeding by solving $d \varphi = Ev-z$ as above completes the proof in this case also.  

In the case $v \in \mathring{H} \Lambda^k(\Omega)$, Lemma \ref{lem3-0-b} may be obtained directly from Lemma \ref{lem3-0} when $\Omega$ is simply connected by applying the procedure in the previous paragraph with $B=\Omega$.   The general case follows by a covering argument.  Let $\{\Omega_i\}$ be a finite open covering of $\Omega$ such that $\Omega \cap \Omega_i$ is Lipschitz for each $i$, and let $\{\chi_i\}$ be a partition of unity subordinate to $\{\Omega_i\}$.  When $ 0 \le k \le n-1$, we first solve $d z_i= d (\chi_i v)$ for $z_i  \in H_0^1\Lambda^k (\Omega_i \cap \Omega)$ and let $z = \sum_{\Omega_i} z_i \in H_0^1 \Lambda^k (\Omega)$.  A simple calculation shows that the compatibility conditions of Lemma \ref{lem3-0} are satisfied, and in addition $dz = dv$ since $\sum_{\Omega_i} \chi_i=1$.  Similarly, we solve $d \varphi_i = \chi_i (v-z)$ for $\varphi_i \in H_0^1 \Lambda^{k-1}(\Omega_i)$ and set $\varphi = \sum_{\Omega_i} \varphi_i$.  In the case $k=n$, let $z_i = \left (  |\Omega_i\cap \Omega|^{-1} \int_{\Omega_i \cap \Omega} \chi_i v \right ) {\rm vol} $ and let $\varphi_i \in H_0^1 \Lambda^{n-1} (\Omega \cap \Omega_i)$ solve $d \varphi_i = \chi_i v - z_i$.  Setting $\varphi = \sum_{\Omega_i} \varphi_i$, $z = v-d \varphi$, and employing Lemma \ref{lem3-0} completes the proof.  
\end{proof}

Our next lemma combines the regular decomposition result of Lemma \ref{lem3-0-b} with a commuting quasi-interpolant in order to obtain approximation results suitable for a posteriori error estimation.  Sch\"oberl defined such an interpolant in \cite{Sch01} for the classical three-dimensional de Rham complex and extended his results to include essential boundary conditions in \cite{Sch08}.  We generally follow Sch\"oberl's construction here, although our notation appears quite different since we use the unified notation of differential forms.  Sch\"oberl instead employed classical notation, which makes the necessary patterns clear and perhaps more concrete but also requires a different definition of the interpolant for each degree of forms ($H^1$, $H(\curl)$, $H(\Div)$, and $L_2$).  Our development also has many similarities to that of Christiansen and Winther in \cite{ChWi08}, who develop a commuting projection operator for differential forms.  Their operator is however global and not suitable for use in a posteriori error estimation.    

In \cite{Sch08} Sch\"oberl defines and analyzes a regular decomposition of the difference between a test function and its interpolant over local element patches for the three-dimensional de Rham complex, whereas we first carry out a global regular decomposition and then interpolate.  The only advantage of Sch\"oberl's approach in the context of a posteriori error estimation seems to be localization of domain-dependent constants, which appears to be mainly a conceptual advantage since the constants are not known. Also, commutativity of the quasi-interpolant is not necessary for the proof of a posteriori error estimates, although it may simplify certain arguments.  See \cite{CXZ12} for proofs of a posteriori estimates for Maxwell's equations that employ a global regular decomposition but not a commuting interpolant.  Commutativity may however be useful in other contexts.  For example, one may  modify the proofs of quasi-optimality of an AFEM for Maxwell's equations in \cite{ZCSWX11} to use a global regular decomposition and a quasi-interpolant instead of a local regular decomposition of the interpolation error so long as the interpolant commutes.




\begin{lemma}\label{lem3-1}
Assume that $v \in H \Lambda^k (\Omega)$ with $\|v\|_H \le 1$.  Then there exists an operator $\Pi_h^k:L_2 \Lambda^k(\Omega) \rightarrow V_h^k$ such that $d^{k+1} \Pi_h^k = \Pi_h^{k+1} d^k$, and in addition the following hold.    If $k=0$, $H \Lambda^0=H^1$ holds and we have
  \begin{align}\label{eq3-9}
  \begin{aligned}
   \sum_{K \in \T_h} \Big [ h_K^{-2} \|v-\Pi_h v\|_{K}^2 & + h_K^{-1} \|\trace (v- \Pi_h v)\|_{\partial K}^2+ |v-\Pi_h v|_{H^1(K)}^2 \Big ] 
 \lesssim 1.  
 \end{aligned}
  \end{align}
 If $1 \le k \le n-1$, there exist $\varphi \in H^1 \Lambda^{k-1}(\Omega)$ and $z \in H^1 \Lambda^{k}(\Omega)$ such that $v=d \varphi + z$, $\Pi_h^k v=d \Pi_h^{k-1} \varphi + \Pi_h^k z$, and for any $K \in \T_h$, 
\begin{align}\label{eq3-8}
\begin{aligned}
 & \sum_{K \in \T_h} \Big [ h_K^{-2}(\|\varphi-\Pi_h \varphi \|_{K}^2 + \|z-\Pi_h z \|_{K}^2)
\\ &  ~~~+ h_K^{-1} (  \|\trace (\varphi -\Pi_h \varphi) \|_{\partial K}^2 + \|\trace (z- \Pi_h z)\|_{\partial K}^2  )
 \Big ]   
   \lesssim 1 .
   \end{aligned}
  \end{align}
In the case $k=n$, the space $H\Lambda^k(\Omega)$ is $L_2 \Lambda^k (\Omega)$, and there exist $\varphi \in H^1 \Lambda^{k-1}(\Omega)$, $z \in L_2 \Lambda^n(\Omega)$ such that $v=d \varphi+z$, $\Pi_h v =d \Pi_h \varphi + \Pi_h z$, and 
\begin{align}\label{eq3-9-a}
\begin{aligned}
 & \sum_{K \in \T_h} \Big [ h_K^{-2}(\|\varphi-\Pi_h \varphi \|_{L_2\Lambda^{k-1} (K)}^2 + \|z-\Pi_h z \|_{L_2\Lambda^{k} (K)}^2)
 \\ &~~~+ h_K^{-1}   \|\trace (\varphi -\Pi_h \varphi) \|_{L_2(\partial K)}^2
 \Big ]    \lesssim 1.
   \end{aligned}
  \end{align}
Assume that $1 \le k \le n$ and $\phi \in H \Lambda^{k-1}(\Omega)$ with $\|\phi\|_H \le 1$.  Then there exists $\varphi \in H^1 \Lambda^{k-1} (\Omega)$ such that $d \varphi =d \phi$, $\Pi_h d \phi= d \Pi_h \phi=d \Pi_h \varphi$, and
\begin{align} \label{eq3-9-b}
\sum_{K \in \T_h} \Big [ h_K^{-2} \|\varphi-\Pi_h \varphi\|_K^2 + h_K^{-1}\|\trace (\varphi -\Pi_h \varphi)\|_{\partial K}^2 \Big ] \lesssim 1.
\end{align}
Finally, the above statements hold with $H \Lambda^k(\Omega)$ replaced by $\mathring{H}_0 \Lambda^k(\Omega)$ and $H^1\Lambda^k(\Omega)$ replaced by $H_0^1 \Lambda^k(\Omega)$, and in this case $\Pi_h : \mathring{H} \Lambda^k(\Omega) \rightarrow \mathring{V}_h^k$.  
 \end{lemma}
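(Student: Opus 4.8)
The plan is to combine the regular decomposition of Lemma~\ref{lem3-0-b} with a locally defined, exactly commuting quasi-interpolant built in the spirit of Sch\"oberl~\cite{Sch01,Sch08} (compare also \cite{ChWi08,CXZ12}), extended from the three-dimensional de Rham complex to the $\mathcal{P}_r$ and $\mathcal{P}_r^-$ complexes in arbitrary dimension. The operator will have the form $\Pi_h^k=I_h^k\circ R_h^k$, where $I_h^k$ is the canonical (unbounded) interpolant determined by the degrees of freedom of $V_h^k$ and $R_h^k$ is a smoothing operator obtained by averaging pullbacks of $v$ under a family of affine ``blow-up'' maps supported on element patches of diameter $\sim h_K$. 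Since pullbacks commute with $d$ and $I_h$ commutes with $d$ on forms smooth enough for its degrees of freedom to be defined, $\Pi_h$ commutes with $d$ across the whole complex; since $R_h^k$ is a local averaging with kernel width $\sim h_K$, normalized to reproduce constant (and, for $r\ge 1$, the relevant lower-order polynomial) forms, $\Pi_h^k$ is $L_2$-stable, reproduces constants, and on $H^1$ forms satisfies the local bounds
\[
\|w-\Pi_h^k w\|_K \lesssim h_K\,\|w\|_{H^1\Lambda^j(\omega_K)},\qquad |w-\Pi_h^k w|_{H^1\Lambda^j(K)}\lesssim \|w\|_{H^1\Lambda^j(\omega_K)},
\]
where $\omega_K$ is a fixed shape-regular patch around $K$. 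For forms with vanishing trace, the blow-up maps near $\partial\Omega$ are taken one-sided (mapping slightly into the interior), so that $\Pi_h$ preserves the boundary condition and restricts to $\mathring{H}\Lambda^k(\Omega)\to\mathring{V}_h^k$.

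Granting these properties of $\Pi_h$, the elementwise estimates follow by routine scaling and summation. For $k=0$ ($H\Lambda^0=H^1$), apply the local $H^1$ bounds to $v$ together with the scaled trace inequality $\|\trace w\|_{L_2(\partial K)}^2\lesssim h_K^{-1}\|w\|_K^2+h_K\,|w|_{H^1(K)}^2$ — valid componentwise, since $\trace w$ consists of a subset of the components of $w$ restricted to $\partial K$ — to $w=v-\Pi_h v$, and sum over $K$ using that shape regularity forces the patches $\{\omega_K\}$ to have bounded overlap, so $\sum_K\|v\|_{H^1(\omega_K)}^2\lesssim\|v\|_{H^1(\Omega)}^2$. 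For $1\le k\le n-1$, invoke Lemma~\ref{lem3-0-b} to write $v=d\varphi+z$ with $\varphi\in H^1\Lambda^{k-1}(\Omega)$, $z\in H^1\Lambda^k(\Omega)$, and $\|\varphi\|_{H^1}+\|z\|_{H^1}\lesssim\|v\|_H\le 1$; linearity of $\Pi_h$ and the commutation $\Pi_h^k d=d\Pi_h^{k-1}$ (applied to $\varphi\in H^1$) give $\Pi_h^k v=d\Pi_h^{k-1}\varphi+\Pi_h^k z$, and \eqref{eq3-8} is obtained by applying the preceding argument to $\varphi$ (with $\Pi_h^{k-1}$) and to $z$ (with $\Pi_h^k$) separately.

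For $k=n$, Lemma~\ref{lem3-0-b} supplies $v=d\varphi+z$ with $\varphi\in H^1\Lambda^{n-1}(\Omega)$ and $z$ a constant multiple of the volume form (with $\|\varphi\|_{H^1}+\|z\|\lesssim\|v\|_H$); the $\varphi$-term is handled exactly as above, while the $z$-term in \eqref{eq3-9-a} vanishes identically because $\Pi_h$ reproduces constants, so $\Pi_h^n z=z$. For \eqref{eq3-9-b}, apply Lemma~\ref{lem3-0-b} in degree $k-1$ to $\phi\in H\Lambda^{k-1}(\Omega)$ to obtain $\phi=d\psi+\varphi$ with $\psi\in H^1\Lambda^{k-2}$, $\varphi\in H^1\Lambda^{k-1}$, and $\|\psi\|_{H^1}+\|\varphi\|_{H^1}\lesssim\|\phi\|_H$; then $d\varphi=d\phi-d^2\psi=d\phi$, commutativity gives $d\Pi_h^{k-1}\phi=\Pi_h^k d\phi=\Pi_h^k d\varphi=d\Pi_h^{k-1}\varphi$, and the bound for $\varphi$ is again the first-paragraph argument (when $k=1$ one simply takes $\psi=0$, $\varphi=\phi$). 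The essential-boundary-condition versions are obtained verbatim, using the $H_0^1$ statement of Lemma~\ref{lem3-0-b} and the fact that $\Pi_h$ maps $\mathring{H}\Lambda^k(\Omega)$ into $\mathring{V}_h^k$.

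The main obstacle is the construction and analysis of $\Pi_h$ itself: one needs a single operator that is simultaneously (i) defined on all of $L_2\Lambda^k(\Omega)$, (ii) exactly commuting with $d$ across the complex, (iii) \emph{locally} stable and first-order accurate on $H^1$ forms with constants depending only on shape regularity (so that the elementwise sums telescope with finite overlap), and (iv) trace-preserving. Sch\"oberl carried this out for $n=3$ in classical notation; the work here is to verify that the patch-based smoothing, the canonical degrees of freedom of the $\mathcal{P}_r$ and $\mathcal{P}_r^-$ spaces, the reproduction of constant forms, and the one-sided boundary modification all go through in the unified differential-forms setting in arbitrary dimension. Once $\Pi_h$ with these properties is in hand, the remainder of the proof — as outlined above — is standard.
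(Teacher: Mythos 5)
Your proposal follows essentially the same route as the paper: the operator is $\Pi_h=I_h\circ R_h^\epsilon$ with a patch-local smoothing by averaged pullbacks in the style of Sch\"oberl and Christiansen--Winther, commutativity comes from the pullback structure, and the estimates are obtained by feeding the regular decomposition of Lemma~\ref{lem3-0-b} through local $L_2$-stability, constant reproduction, Bramble--Hilbert, the scaled trace inequality, and finite patch overlap. Two points of comparison are worth recording. First, your blanket local bound $\|w-\Pi_h^k w\|_K\lesssim h_K\|w\|_{H^1\Lambda^j(\omega_K)}$ is exactly the claim that fails to come for free on elements touching $\partial\Omega$: the extension of $\omega$ outside $\Omega$ (by zero, or by pullback under a reflection) destroys local reproduction of constant forms there, so the paper must argue separately --- in the natural case, constants are still preserved provided $h_K\le h_0$ with $h_0$ depending on the vector field $X$, and for $h_K>h_0$ one falls back on bare $L_2$-stability with $h_K$ treated as an $\Omega$-dependent constant; in the essential case one exploits that $\varphi,z\in H_0^1$ vanish on $\partial\Omega$ and applies a Poincar\'e inequality in place of Bramble--Hilbert. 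You correctly flag the boundary modification as part of the construction to be verified, but the fix is not merely a one-sided shift of the blow-up maps; it requires these additional boundary-element arguments. (This also covers the $z$-term in \eqref{eq3-9-a}: $\Pi_h^n z=z$ for constant $z$ only away from the boundary.) Second, for \eqref{eq3-9-b} you obtain $\varphi$ by applying Lemma~\ref{lem3-0-b} to $\phi$ itself, writing $\phi=d\psi+\varphi$ and discarding $d\psi$, whereas the paper extends $\phi$ and solves $d\varphi=d\phi$ directly via Lemma~\ref{lem3-0}; both rest on the same regularity result and your variant is a harmless, arguably cleaner, reuse of the decomposition lemma.
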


\begin{proof}
 Let $\Pi_h= I_h R_h^\epsilon$, where following \cite{Sch08, ChWi08} $I_h$ is the canonical interpolant for smooth forms and $R_h^\epsilon$ is a smoothing operator with smoothing parameter $\epsilon$ which we detail below.  (Note that the construction in \cite{ChWi08} involves a further operator $J_h^\epsilon$, which is the inverse of $\Pi_h$ applied to $V_h^k$.  $J_h^\epsilon$ is non-local and thus not suitable for use here.)   We conceptually follow \cite{Sch08} in our definition of $R_h^\epsilon$ but also use some technical tools from and more closely follow the notation of \cite{ChWi08}.  We omit a number of details, so a familiarity with these works would be helpful to the reader. 

Following \cite{ChWi08}, there is a Lipschitz-continuous vector field $X(x)$ defined on a neighborhood of $\Omega$ such that $X(x) \cdot \vec{n}(x)>0$ for all outward unit normals $\vec{n}(x)$, $x \in \partial \Omega$. Let $g_h(x)$ be the natural Lipschitz-continuous mesh size function.  There is then $\delta>0$ so that for $\epsilon>0$ sufficiently small depending on $\Omega$, $B_{\epsilon}(x + \delta \epsilon g_h(x) X(x)) \subset \mathbb{R}^n \setminus \Omega$ for all $x \in \overline{\Omega}$ with ${\rm dist}(x, \partial \Omega)<\epsilon g_h(x)$, and $B_{\epsilon}(x - \delta \epsilon g_h(x) X(x)) \subset \Omega$ for all $x \in \partial \Omega$.  Let $\chi_V$ be the standard piecewise linear ``hat'' function associated to the vertex $V$, and let $\mathcal{N}_\partial$ be the set of boundary vertices.  Extending $\chi_V$ and $g_h$ by reflection over $\partial \Omega$ via $X$ (cf. \cite{ChWi08}), we define $\Phi_D(x) = x + \sum_{V \in \mathcal{N}_\partial} \chi_V(x) \delta \epsilon g_h(x) X(x)$ and $\Phi_N(x) = x-\sum_{V \in \mathcal{N}_\partial} \chi_V(x) \delta \epsilon g_h(x) X(x)$.  

Following \cite{Sch08}, we associate to each vertex $V$ in $\T_h$ a control volume $\Omega_{V}$.  Let $\Omega_V$ be the ball of radius $\epsilon g_h(V)/2$ centered at $V$ if $V$ is an interior  or Dirichlet boundary node, and at $\Phi_N(V)$ if $V$ is a boundary Neumann node.  Also let $f_V \in \mathbb{P}_n$ satisfy
\begin{align}
\label{eq3-11-a}
\int_{\Omega_V} p(x) f_V(x) \d x = p(V), ~~p \in \mathbb{P}_n.
\end{align}

\setlength{\unitlength}{.75cm}
\begin{figure}[h]
\begin{center}
\includegraphics[height=2in]{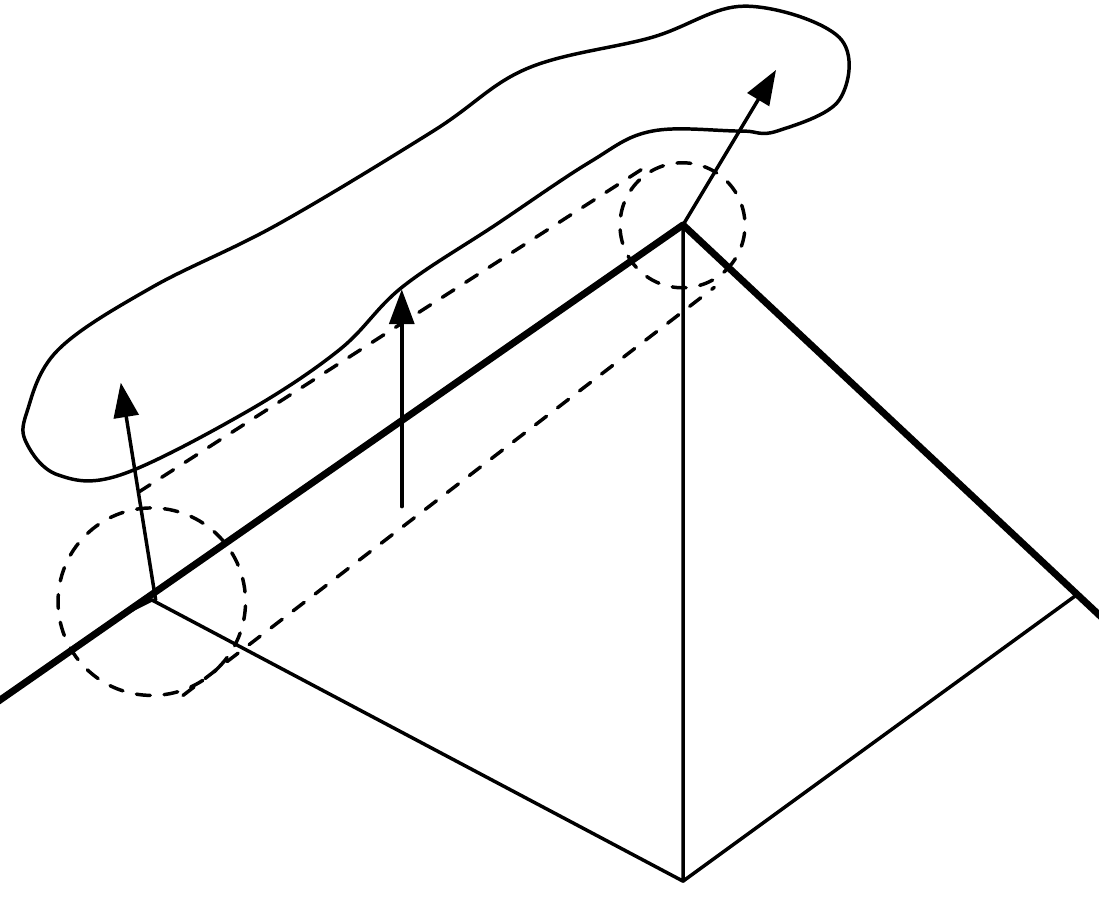}
\includegraphics[height=2in]{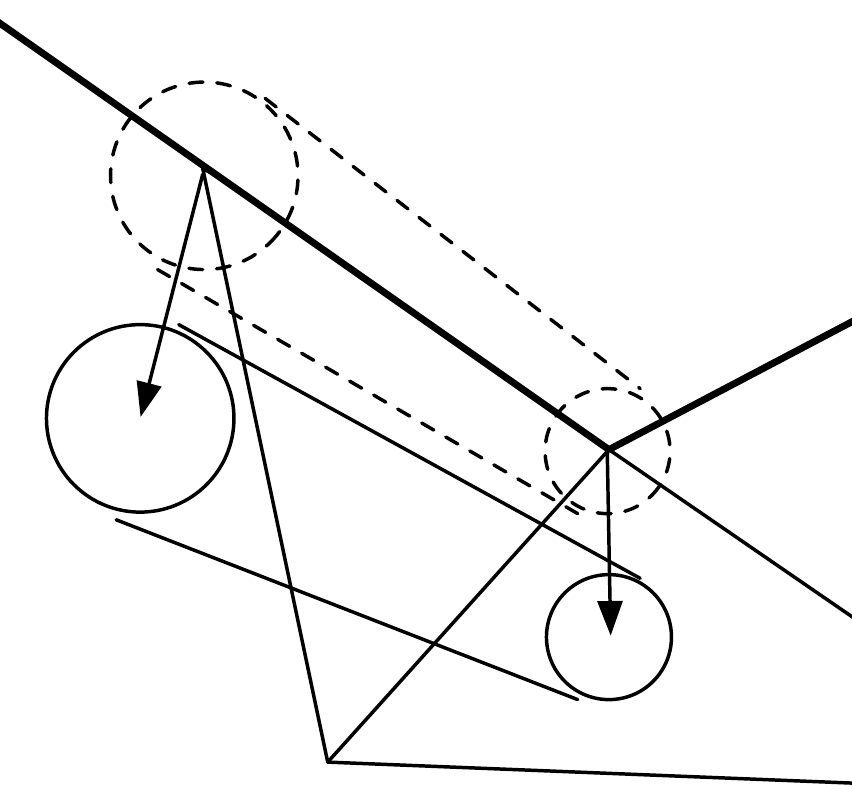}
\put(-14.7,1.7){\parbox[b]{2cm}{$V$}}
\put(-15,4){\parbox[b]{2cm}{$\Phi_D(V)$}}
\put(-14.7,1.1){\parbox[b]{2cm}{$\Omega_V$}}
\put(-5.6,5.5){\parbox[b]{2cm}{$V$}}
\put(-7.55, 3.2){\parbox[b]{2cm}{$\Omega_V$}}
\put(-6.75,2.95){\parbox[b]{2cm}{$\Phi_N(V)$}}
\end{center}
\caption{Schematic of transformations and control volumes for the Dirichlet (left) and Neumann (right) cases.}
\label{fig1}
\end{figure}

Given $K \in \T_h$, let $\{V_1, ..., V_{n+1}\}$ be the vertices with associated barycentric coordinates $\{\lambda_1(x), ..., \lambda_{n+1}(x)\}$ for $x \in K$.  Let $\hat{x}(x, y_1,...., y_{n+1}) = \sum_{i=1}^{n+1} \lambda_i(x) y_i = x + \sum_{i=1}^{n+1} \lambda_i(x) (y_i -V_i)$ for $y_i \in \Omega_{V_i}$.  We also denote by $\omega$ the extension of $\omega$ by 0 to $\mathbb{R}^n$ in the case of Dirichlet boundary conditions, and the extension of $\omega$ to a neighborhood of $\Omega$ by smooth reflection via $X$ in the case of Neumann boundary conditions (cf. \cite{ChWi08}).  Letting $\tilde{x}=\hat{x}$ in the case of natural boundary conditions and $\tilde{x}=\Phi_D \circ \hat{x}$ in the case of essential boundary conditions, we define 
\begin{align}
\label{eq3-12}
(R_h^\epsilon \omega)_x = \int_{\Omega_{V_1}} \cdots \int_{\Omega_{V_{n+1}}} f_{V_1} (y_1) \cdots f_{V_{n+1}}(y_{n+1})  (\tilde{x}^*\omega )_x  \d y_{n+1} \cdots \d y_1.
\end{align}
Commutativity of $R_h^\epsilon$ and thus of $\Pi_h$ immediately follows as in \cite{Sch01, Sch08, ChWi08}.  In addition, $\omega=0$ on $\mathbb{R}^n \setminus \Omega$ implies that $(R_h^\epsilon \omega)_x=0$ for $x \in \partial \Omega$ in the case of essential boundary conditions, since in this case ${\rm dist}(\hat{x}(x), \partial \Omega) <  g_h(x) \epsilon$ for all $(y_1,...,y_{n+1}) \in \Omega_{V_1} \times \cdots \times \Omega_{V_{n+1}}$ so that $\Phi_D \circ \hat{x}(x, y_1, ..., y_{n+1}) \in \mathbb{R}^n \setminus \Omega$.   

We now establish that $R_h^\epsilon$ preserves constants locally on elements sufficiently removed from $\partial \Omega$.  This implies the same of $\Pi_h$ since $I_h$ preserves constants.  Let $\omega$ be a constant $k$-form, and let $x \in \Omega$ lie in $K\in \T_h$ which in the Neumann case satisfies $\overline{K} \cap \partial \Omega = \emptyset$ and which in the Dirichlet case satisfies $\overline{\omega}_K \cap \partial \Omega = \emptyset$.   Noting that now $\tilde{x} =\hat{x}$ on $\omega_K$, computing that $D  \hat{x}(x, y_1,..., y_{n+1}) = I + \sum_{i=1}^{n+1} (y_i-V_i) \otimes \nabla \lambda_i(x)$, applying \eqref{eq3-1-aa}, and applying the multilinearity of $\omega$, we find that for $n$-vectors $v_1,...,v_k$
\begin{align}
\label{eq3-13}
\begin{aligned}
 (& R_h^\epsilon    \omega)_x   (v_1,...,v_k) 
 = \int_{\Omega_{v_1}} \cdot \cdot \int_{\Omega_{V_{n+1}}} f_{V_1} (y_1) \cdots f_{V_{n+1}}(y_{n+1}) 
 \\ &  \times \omega\left (v_1 + \sum_{i=1}^{n+1} (\nabla \lambda_i(x) \cdot v_1) (y_i -V_i),...,v_k + \sum_{i=1}^{n+1} (\nabla \lambda_i(x) \cdot v_k) (y_i-V_i) \right )   
 \\ & \times  \d y_{n+1} \cdots \d y_1  
\\ & = \omega(v_1,..., v_k) + \Phi.
\end{aligned}
\end{align}
Here $\Phi$ consists of a sum of constants multiplying terms of the form $\omega(z_1, ..., z_k)$, where $z_i = y_j-V_j$ for at least one entry vector $z_i$ and for some $1 \le j \le n+1$.  \eqref{eq3-11-a} then implies that $ \int_{\Omega_{v_1}} \cdots \int_{\Omega_{V_{n+1}}} f_{V_1} (y_1) \cdots f_{V_{n+1}}(y_{n+1}) \Phi  \d y_{n+1} \cdots \d y_1 = 0$.  Observing that $\int_{\Omega_V} f_V (y) \d y =1$ completes the proof that $R_h^\epsilon \omega (v_1,..., v_k) = \omega(v_1,..., v_k)$ in the case that $x$ lies in an interior element.  

Arguments as in \cite{ChWi08, Sch01, Sch08} show that for $\omega \in L_2 \Lambda^k$, $\|\Pi_h \omega\|_K \lesssim \|\omega\|_{K^*}$, where $K^*=\omega_K$ if $K$ is an interior element and $K^*= \omega_K \cup \{x \in \mathbb{R}^n: {\rm dist}(x,K) \lesssim \epsilon h_K\}$ otherwise; cf. Figure 4.2 of \cite{ChWi08}.  In the latter case the definition of the extension of $\omega$ to $\mathbb{R}^n \setminus \Omega$ as either 0 or the pullback of $\omega$ under a smooth reflection implies that the values of $\omega$ on $K^*$ in fact depend only on values of $\omega$ on $\omega_K$ so long as $\epsilon$ is sufficiently small, which in turn implies that $\|\Pi_h \omega\|_K \lesssim \| \omega\|_{K^*} \lesssim \|\omega\|_{\omega_K}$.  For $K$ not abutting $\partial \Omega$ we may combine the above properties of $\Pi_h$ with the Bramble-Hilbert Lemma (cf. \cite{BS08}) in order to yield $h_K^{-1} \|z-\Pi_h z\|_{K} +|z-\Pi_h z|_{H^1 \Lambda^k (K)} \lesssim |z|_{H^1\Lambda^k (\omega_K)}$ for $z \in H^1 \Lambda^k(\Omega)$.  A standard scaled trace inequality for $z \in H^1 \Lambda^k (K)$ reads $\|\trace(z-\Pi_h z)\|_{\partial K} \lesssim h_K^{-1/2} \|z-\Pi_h z\|_{K}+h_K^{1/2} |z -\Pi_h z|_{H^1 \Lambda^k (K)}$.  Combining these inequalities with the finite overlap of the patches $\omega_K$ implies \eqref{eq3-8}, \eqref{eq3-9}, and \eqref{eq3-9-a} modulo boundary elements. 

When $V^k = H \Lambda^k(\Omega)$ and $x \in \overline{K}$ with $\overline{K} \cap \partial \Omega \neq \emptyset$, the above argument that constants are locally preserved and a standard Bramble-Hilbert argument applies holds so long as the convex hull of $\Omega_{V_1},..., \Omega_{V_{n+1}}$ lies in $\Omega$ for the vertices $V_1,.., V_{n+1}$ of $K$.  This is true if $h_K \le h_0$ for some $h_0$ depending on $\Omega$.  To prove this, let $V_1,..., V_k$ be the vertices of $K$ lying on $\partial \Omega$.  
Our assumptions imply that $\Omega_{V_i} \subset \Omega$, $i=1,...,k$.  We must show that the convex hull of  $\Omega_{V_1},..., \Omega_{V_k}$ also is a subset of $\Omega$.  This set is the union of all balls $B_{\epsilon g_h(y)/2} (y)$, where $y=\sum_{i=1}^k \lambda_i \Phi_N(V_i)$ for $0 \le \lambda_i \le 1$ satisfying $\sum_{i=1}^k \lambda_i=1$.  Fixing such a $y$,  let $\tilde{y}=\sum_{i=1}^k \lambda_i V_i \in \partial K \cap \partial \Omega$.  Then ${\rm dist} (\Phi_N(\tilde{y}), \partial \Omega) \ge \epsilon g_h(\tilde{y})$.  But $|\Phi_N (\tilde{y})-y| \le \epsilon g_h(y) h_K \|D X\|_{L_\infty}$, so that $B_{\epsilon g_h(y)/2}(y) \subset \Omega$ so long as $h_K \|DX\|_{L_\infty(K)} \le \frac{1}{2}$.  We thus take $h_0 = \frac{1}{2 \|DX\|_{L_\infty(\Omega)}}$. Thus the results of Lemma \ref{lem3-1} follow as above when $h_K \le h_0$.  If the convex hull of $\Omega_{V_1},..., \Omega_{V_{n+1}}$ does not lie in $\Omega$, then the integral \eqref{eq3-12} may sample values of $\omega_x$ for $x \notin \Omega$.  Extension by pullback of a reflection does not preserve constant forms, and so in this case $\Pi_h$ does not necessarily preserve constants locally.  The operator $\Pi_h$ is however still locally $L_2$ bounded, and since in this case we have $h_K >h_0$ we may think of $h_K$ as merely a constant depending on $\Omega$ and thus still obtain the approximation estimates of Lemma \ref{lem3-1} after elementary manipulations.  

In the case where $V^k = \mathring{H} \Lambda^k(\Omega)$ and $x$ lies in a patch $\omega_K$ with $\overline{K} \cap \partial \Omega \neq \emptyset$, $\Pi_h$ does not preserve constants locally, but is still locally $L_2$ bounded.  Because we in this case apply $\Pi_h$ to forms $z, \varphi \in H_0^1$ (that is, to forms which are identically 0 on $\partial \Omega$), we may apply a Poincar\'e inequality in conjunction with elementary manipulations in order to obtain \eqref{eq3-8}, \eqref{eq3-9}, and \eqref{eq3-9-a} in this case as well.

 \eqref{eq3-9-b} follows by a similar argument, that is, by extending $\phi$ $H$-continuously to a ball $B$ in the Neumann case, solving the boundary value problem $d \varphi = d \phi$ on $B$ or $\Omega$ as appropriate, employing the $H^1$ regularity result of Lemma \ref{lem3-0}, and then applying properties of $\Pi_h$.  
\end{proof}


\section{Reliability of a posteriori error estimators}\label{sec_reliability}

In this section we define and prove the reliability of a posteriori estimators.  We will establish a series of lemmas bounding in turn each of the terms in \eqref{eq2-23}.  Below we denote by $\llbracket \chi \rrbracket$ the jump in a quantity $\chi$ across an element face $e$.  In case $e \subset \partial \Omega$, $\llbracket \chi \rrbracket$ is simply interpreted as $\chi$.  All of our results and discussion below are stated for the case of natural boundary conditions; results for essential boundary conditions are the same with the modification that edge jump terms are taken to be 0 on boundary edges.  

\subsection{Reliability:  Testing with $ \tau \in H \Lambda^{k-1}$}

\begin{lemma}\label{lem4-1}
Given $K \in \T_h$, let
\begin{align} \label{eq4-1}
\eta_{-1}(K)= \left \{ \begin{array}{l}   0 \hbox{ for } k=0,
\\  h_K \|\sigma_h -\delta u_h\|_K+ h_K^{1/2}  \|\llbracket \trace \star  u_h \rrbracket \|_{\partial K} \hbox{ for } k=1,
\\  h_K (\|\delta \sigma_h \|_K + \|\sigma_h -\delta u_h\|_K) 
\\  ~~~~~+ h_K^{1/2} ( \|\llbracket \trace \star \sigma_h \rrbracket \|_{\partial K} + \|\llbracket \trace \star  u_h \rrbracket \|_{\partial K}) \hbox{ for } 2 \le k \le n.
\end{array} \right . 
\end{align}
Let $(\sigma, u, p)$ be the weak solution to \eqref{eq3-4}-\eqref{eq3-6}, let $(\sigma_h, u_h, p_h)$ be the corresponding finite element solution having errors $(e_\sigma, e_u, e_p)$, and assume $\tau \in H \Lambda^{k-1}(\Omega)$ with $\|\tau\|_{H \Lambda^{k-1}(\Omega)} \le 1$.  Then
\begin{align} \label{eq4-2}
|\la e_\sigma, (\tau-\Pi_h \tau) \ra- \la d (\tau-\Pi_h \tau), e_u \ra | \lesssim \left (\sum_{K \in \T_h} \eta_{-1}(K)^2 \right )^{1/2}.
\end{align}
\end{lemma}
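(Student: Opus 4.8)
The plan is to rewrite the left-hand side of \eqref{eq4-2} as a fully computable elementwise residual, then estimate it by elementwise integration by parts, the regular decomposition of Lemma~\ref{lem3-0-b} (as built into $\Pi_h$ via Lemma~\ref{lem3-1}), and scaled trace inequalities. When $k=0$ there is no $\sigma$-variable and $\eta_{-1}(K)=0$, so nothing is to be proved; assume $1\le k\le n$. Since $\tau\in H\Lambda^{k-1}(\Omega)$ and $\Pi_h\tau\in V_h^{k-1}\subset H\Lambda^{k-1}(\Omega)$, the form $\tau-\Pi_h\tau$ is admissible in the first equation of \eqref{eq2-2}, giving $\langle\sigma,\tau-\Pi_h\tau\rangle-\langle d(\tau-\Pi_h\tau),u\rangle=0$. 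Subtracting, the quantity to be bounded equals $|\langle\sigma_h,\tau-\Pi_h\tau\rangle-\langle d(\tau-\Pi_h\tau),u_h\rangle|$, which involves only $\sigma_h$, $u_h$ and the test form $\tau$.

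For $k=1$ we have $H\Lambda^0=H^1$, so $\Pi_h\tau$ obeys \eqref{eq3-9} directly with no regular decomposition. Integrating $\langle d(\tau-\Pi_h\tau),u_h\rangle$ by parts over each $K$ via \eqref{eq3-3} gives $\sum_K[\langle\tau-\Pi_h\tau,\delta u_h\rangle_K+\int_{\partial K}\trace(\tau-\Pi_h\tau)\wedge\trace\star u_h]$; since $\tau-\Pi_h\tau$ lies in $H\Lambda^0$ globally its trace is single-valued across interior faces, so the boundary sum collapses to $\sum_e\int_e\trace(\tau-\Pi_h\tau)\wedge\llbracket\trace\star u_h\rrbracket$ (with $\llbracket\cdot\rrbracket$ read as the trace itself on faces in $\partial\Omega$, where it measures the residual in the natural condition $\trace\star u=0$). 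No $\delta\sigma_h$ or $\trace\star\sigma_h$ terms arise, since $\sigma_h$ is a $0$-form. Cauchy--Schwarz, \eqref{eq3-9}, a scaled trace inequality, and shape regularity ($h_e\sim h_K$, finite patch overlap) then give \eqref{eq4-2}.

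For $2\le k\le n$ we additionally use Lemma~\ref{lem3-1} in the range $1\le k-1\le n-1$: there are $\psi\in H^1\Lambda^{k-2}(\Omega)$, $z\in H^1\Lambda^{k-1}(\Omega)$ with $\tau=d\psi+z$, $\Pi_h\tau=d\Pi_h\psi+\Pi_h z$, and scaled bound \eqref{eq3-8}; hence $\tau-\Pi_h\tau=d(\psi-\Pi_h\psi)+(z-\Pi_h z)$ and, by $d\circ d=0$, $d(\tau-\Pi_h\tau)=d(z-\Pi_h z)$. Split $\langle\sigma_h,\tau-\Pi_h\tau\rangle=\langle\sigma_h,d(\psi-\Pi_h\psi)\rangle+\langle\sigma_h,z-\Pi_h z\rangle$ and integrate the first summand by parts: it yields $\sum_K\langle\delta\sigma_h,\psi-\Pi_h\psi\rangle_K$ together with a boundary sum that, by single-valuedness of $\trace(\psi-\Pi_h\psi)$, collapses to $\sum_e\int_e\trace(\psi-\Pi_h\psi)\wedge\llbracket\trace\star\sigma_h\rrbracket$. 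Integrating $\langle d(z-\Pi_h z),u_h\rangle$ by parts gives $\sum_K\langle z-\Pi_h z,\delta u_h\rangle_K+\sum_e\int_e\trace(z-\Pi_h z)\wedge\llbracket\trace\star u_h\rrbracket$, and the two leftover volume terms combine into $\sum_K\langle z-\Pi_h z,\sigma_h-\delta u_h\rangle_K$. Cauchy--Schwarz on the four sums, with \eqref{eq3-8}, scaled trace inequalities, and shape regularity, reproduces exactly the pieces $h_K\|\delta\sigma_h\|_K$, $h_K\|\sigma_h-\delta u_h\|_K$, $h_K^{1/2}\|\llbracket\trace\star\sigma_h\rrbracket\|_{\partial K}$, $h_K^{1/2}\|\llbracket\trace\star u_h\rrbracket\|_{\partial K}$, that is, $(\sum_{K\in\T_h}\eta_{-1}(K)^2)^{1/2}$. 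The essential boundary condition case is identical with $H^1$ replaced by $H_0^1$, so that $\psi$, $z$ and $\tau-\Pi_h\tau$ vanish on $\partial\Omega$ and boundary faces drop out, matching the convention that jump terms vanish there.

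The main obstacle is the bookkeeping that produces the $\delta\sigma_h$ volume residual and the $\trace\star\sigma_h$ jump: these arise not from the obvious integration by parts of $\langle d(\tau-\Pi_h\tau),u_h\rangle$, but from integrating by parts $\sigma_h$ against the $d(\psi-\Pi_h\psi)$ component of the regular decomposition of $\tau-\Pi_h\tau$, and recognizing that this move is both necessary and sufficient is the crux. One must also track face orientations carefully when merging elementwise boundary integrals into single-face jumps, and check that $\trace\star\sigma_h$ and $\trace\star u_h$ are well defined elementwise — which holds because $\sigma_h$, $u_h$ are piecewise polynomial, hence smooth on each $K$.
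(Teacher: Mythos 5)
Your proposal is correct and follows essentially the same route as the paper's proof: reduce to $|\langle\sigma_h,\tau-\Pi_h\tau\rangle-\langle d(\tau-\Pi_h\tau),u_h\rangle|$ via the first equation of \eqref{eq2-2}, apply the regular decomposition and commuting interpolant of Lemma \ref{lem3-1}, integrate by parts elementwise, and combine the resulting volume and jump terms with \eqref{eq3-8} via Cauchy--Schwarz. The only differences are notational (your $\psi$ is the paper's $\varphi$), and your handling of $k=1$ as the case where the decomposition degenerates to $z=\tau$ matches the paper exactly.
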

\begin{proof}
If $k=0$, then $\tau$ is vacuous and so the term above disappears.  We next consider the case $2 \le k \le n$.  Using Lemma \ref{lem3-1}, we write $\tau=d \varphi + z$, where $\varphi \in H^1 \Lambda^{k-2}(\Omega)$ and $z \in H^1 \Lambda^{k-1}(\Omega)$.  Since $\Pi_h \tau =d \Pi_h \varphi + \Pi_h z$ and $d \circ d=0$, we have $d (\tau-\Pi_h \tau)=d (z-\Pi_h z)$.  Thus using the first line of \eqref{eq2-2} and the integration by parts formula \eqref{eq3-3} on each element $K \in \T_h$, we have
\begin{align} \label{eq4-3}
\begin{aligned}
-\la e_\sigma, &  \tau-\Pi_h \tau \ra + \la d(\tau-\Pi_h \tau), e_u \ra = \la \sigma_h, \tau-\Pi_h \tau \ra - \la d(\tau-\Pi_h \tau), u_h \ra 
\\ & = \sum_{K \in \T_h} \la \sigma_h, d (\varphi - \Pi_h \varphi) + z-\Pi_h z \ra_K  - \la d(z-\Pi_h z), u_h \ra_K
\\ & = \sum_{K \in \T_h} \la \delta \sigma_h, \varphi -\Pi_h \varphi \ra_K + \int_{\partial K} \trace (\varphi - \Pi_h \varphi) \wedge \trace \star  \sigma_h 
\\ & ~~~~+ \la \sigma_h -\delta u_h, z-\Pi_h z\ra_K -\int_{\partial K} \trace (z-\Pi_h z) \wedge \trace \star u_h.
\end{aligned}
\end{align}
Note next that $\trace(z-\pi_h z)$ is single-valued on an edge $e=K_1 \cap K_2$ , since $z \in H^1 \Lambda^k$ and $\Pi_h z \in H \Lambda^k$.  $\trace \star u_h$ on the other hand is different depending on whether it is computed as a limit from $K_1$ or from $K_2$, and we use $\llbracket \trace \star u_h \rrbracket$ to denote its jump ($\llbracket \trace \star u_h \rrbracket = \trace \star u_h$ on $\partial \Omega$).  A similar observation holds for $\trace (\varphi -\Pi_h \varphi)$ and $\trace \star \sigma_h$.  Let $\E_h$ denote the set of faces ($n-1$-dimensional subsimplices) in $\T_h$, and let $\star_{\partial K}$ denote the Hodge star on $\Lambda^j (\partial K)$ (with $j$ determined by context).  We then have using \eqref{eq3-1-a} and the fact that the Hodge star is an $L_2$-isometry that
\begin{align}\label{eq4-4}
\begin{aligned}
\sum_{K \in \T_h} \int_{\partial K} \trace (\varphi- \Pi_h \varphi) & \wedge\trace \star \sigma_h =  \sum_{e \in \E_h} \la \star_{\partial K} \trace (\varphi-\Pi_h \varphi), \llbracket \trace \star \sigma_h \rrbracket \ra
\\ & \lesssim \sum_{K \in T_h} \|\trace (\varphi -\Pi_h \varphi)\|_{\partial K} \|\llbracket \trace \star \sigma_h \rrbracket \|_{\partial K}.
\end{aligned}
\end{align}
Similarly manipulating the other boundary terms in \eqref{eq4-3} and employing \eqref{eq3-8} yields
\begin{align}\label{eq4-5}
\begin{aligned}
\la \sigma_h, & \tau-\Pi_h \tau \ra - \la d (\tau-\Pi_h \tau), u_h \ra
\\ & \lesssim \sum_{K \in \T_h} \eta_{-1}(K) \Big [ h_K^{-1} (\|z-\Pi_h z\|_K + \|\varphi -\Pi_h \varphi\|_K) 
\\ & ~~+h_K^{-1/2}(\|\trace(z-\Pi_h z)\|_{\partial K} + \|\trace (\varphi -\Pi_h \varphi)\|_{\partial K}) \Big ]
\\ & \lesssim   \Big  ( \sum_{K \in \T_h} \eta_{-1}(K)^2 \Big)^{1/2} 
\times \Big ( \sum_{K \in \T_h} \Big [ h_K^{-2}(\|\varphi-\Pi_h \varphi \|_{K}^2 + \|z-\Pi_h z \|_{K}^2)
 \\ &~~+ h_K^{-1} (  \|\trace (\varphi -\Pi_h \varphi) \|_{\partial K}^2 + \|\trace (z- \Pi_h z)\|_{\partial K}^2  )\Big ] \Big )
\\ &  \lesssim   \Big  ( \sum_{K \in \T_h} \eta_{-1}(K)^2 \Big)^{1/2}.
  \end{aligned}
\end{align}
Thus the proof is completed for the case $2 \le k \le n$.  

For the case $k=1$ we have by definition that $z=\tau \in H \Lambda^0(\Omega)= H^1(\Omega)$. Thus the proof proceeds as above but with terms involving $\delta \sigma_h$ and $\trace \star \sigma_h$ omitted.
\end{proof}

\subsection{Reliability:  Testing with $v \in H \Lambda^k$} \label{sec4-2}

In our next lemma we bound the term $\la f - d \sigma_h -p_h, v - \Pi_h v \ra - \la d  u_h, d (v- \Pi_h v) \ra$ from \eqref{eq2-23}.   Before doing so we note that $\H^n$ and $\H_h^n$ are always trivial, so in this case $p=p_h=0$.  We however leave the harmonic term $p_h$ in our indicators even when $k=n$ for the sake of consistency with the other cases.

\begin{lemma}\label{lem4-2}  
Let $K \in \T_h$, and assume that $f \in H^1 \Lambda^k (K)$ for each $K \in \T_h$.  Let 
\begin{align} \label{eq4-20}
\eta_0(K)= \left \{ \begin{array}{l}  h_K \|f-p_h - \delta d u_h\|_K+ h_K^{1/2} \|\llbracket \trace \star d u_h \rrbracket \|_{\partial K} \hbox{ for } k=0,
\\ h_K ( \|f-d \sigma_h - p_h -\delta d u_h \|_K + \|\delta (f-d\sigma_h -p_h)\|_K) 
\\  ~~~~~+ h_K^{1/2} ( \|\llbracket \trace \star d u_h \rrbracket  \|_{\partial K} + \|\llbracket \trace \star (f-d \sigma_h -p_h) \rrbracket \|_{\partial K}) 
\\ ~~~~~~~~~~\hbox{ for } 1 \le k \le n-1
\\ \|f-d \sigma_h-p_h\|_K \hbox{ for } k =n.
\end{array} \right . 
\end{align}
Under the above assumptions on the regularity of $f$ and with all other definitions as in Lemma \ref{lem4-1} above, we have for any $v \in H \Lambda^k (\Omega)$ with $\|v\|_{H \Lambda^k (\Omega)} \le 1$
\begin{align} \label{eq4-24}
 \la f - d \sigma_h -p_h, v - \Pi_h v \ra - \la d  u_h, d (v- \Pi_h v) \ra  \lesssim \left (  \sum_{K \in \T_h} \eta_0(K)^2  \right )^{1/2}.
 \end{align}
\end{lemma}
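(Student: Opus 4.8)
The plan is to follow the pattern of the proof of Lemma \ref{lem4-1}: decompose $v$ via Lemma \ref{lem3-1}, integrate by parts elementwise using \eqref{eq3-3}, reorganize the element-boundary contributions into interior-face jumps, and finish with elementwise Cauchy--Schwarz, a discrete Cauchy--Schwarz over $\T_h$, and the interpolation bounds \eqref{eq3-8}--\eqref{eq3-9-a}.

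I would treat the generic case $1 \le k \le n-1$ first. Using Lemma \ref{lem3-1}, write $v = d\varphi + z$ with $\varphi \in H^1\Lambda^{k-1}$, $z \in H^1\Lambda^k$ and $\Pi_h v = d\Pi_h\varphi + \Pi_h z$; since $d\circ d = 0$ this gives $v - \Pi_h v = d(\varphi-\Pi_h\varphi) + (z-\Pi_h z)$ and $d(v-\Pi_h v) = d(z-\Pi_h z)$, so the left-hand side of \eqref{eq4-24} splits into a ``$\varphi$-part'' $\la f-d\sigma_h-p_h,\,d(\varphi-\Pi_h\varphi)\ra$ and a ``$z$-part'' $\la f-d\sigma_h-p_h,\,z-\Pi_h z\ra - \la du_h,\,d(z-\Pi_h z)\ra$. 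The extra hypothesis $f\in H^1\Lambda^k(K)$ is exactly what makes $\delta(f-d\sigma_h-p_h)\in L_2(K)$ and $\trace\star(f-d\sigma_h-p_h)\in L_2(\partial K)$ (the remaining pieces $d\sigma_h$, $p_h$, $du_h$ being piecewise polynomial), so \eqref{eq3-3} may be applied on each $K$: in the $\varphi$-part it produces $\la\delta(f-d\sigma_h-p_h),\,\varphi-\Pi_h\varphi\ra_K + \int_{\partial K}\trace(\varphi-\Pi_h\varphi)\wedge\trace\star(f-d\sigma_h-p_h)$, and in the $z$-part, integrating $\la du_h,\,d(z-\Pi_h z)\ra_K$ by parts and combining yields $\la f-d\sigma_h-p_h-\delta du_h,\,z-\Pi_h z\ra_K - \int_{\partial K}\trace(z-\Pi_h z)\wedge\trace\star du_h$. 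Because $\varphi,z\in H^1$ and $\Pi_h\varphi,\Pi_h z$ are conforming, $\trace(\varphi-\Pi_h\varphi)$ and $\trace(z-\Pi_h z)$ are single-valued across interior faces, so summing over $K$ converts the element-boundary integrals into sums over faces paired against the jumps $\llbracket\trace\star(f-d\sigma_h-p_h)\rrbracket$ and $\llbracket\trace\star du_h\rrbracket$, exactly as in \eqref{eq4-4}. I would then apply Cauchy--Schwarz on each element and each face, splitting the powers $h_K$, $h_K^{1/2}$ so that the residual factors assemble into $\eta_0(K)$ and the interpolation factors into the summand of \eqref{eq3-8}; a discrete Cauchy--Schwarz over $\T_h$ together with \eqref{eq3-8} then gives \eqref{eq4-24}.

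For $k=0$ the argument is the same but simpler: $H\Lambda^0 = H^1$, so no regular decomposition is needed (apply $\Pi_h$ to $v$ directly and use \eqref{eq3-9}), $\sigma_h$ is vacuous, and only $\la du_h,\,d(v-\Pi_h v)\ra$ is integrated by parts, giving $\la f-p_h-\delta du_h,\,v-\Pi_h v\ra_K - \int_{\partial K}\trace(v-\Pi_h v)\wedge\trace\star du_h$. For $k=n$ the strategy changes: here $dv=0$, so the $du_h$-term vanishes, and Lemma \ref{lem3-1} gives $v = d\varphi + z$ with $\varphi\in H^1\Lambda^{n-1}$ but $z\in L_2\Lambda^n$ only, so rather than integrating by parts (which would introduce $\delta$-terms absent from the $k=n$ indicator) I would bound $\la f-d\sigma_h-p_h,\,d(\varphi-\Pi_h\varphi)\ra$ and $\la f-d\sigma_h-p_h,\,z-\Pi_h z\ra$ directly by Cauchy--Schwarz, using $\|d(\varphi-\Pi_h\varphi)\| = \|d\varphi-\Pi_h d\varphi\| \lesssim \|d\varphi\| \lesssim 1$ (by $L_2$-boundedness of $\Pi_h$) and $\sum_K\|z-\Pi_h z\|_K^2 \lesssim (\max_K h_K)^2 \lesssim 1$ from \eqref{eq3-9-a}; this yields the bound by $\|f-d\sigma_h-p_h\| = (\sum_K\eta_0(K)^2)^{1/2}$. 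The essential-boundary-condition version is identical, with boundary face jumps set to zero.

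The step I expect to be the main obstacle — and the reason $k=n$ is treated separately — is the asymmetric regularity of the regular decomposition. For $1 \le k \le n-1$ both factors $\varphi,z$ lie in $H^1$, so elementwise integration by parts genuinely delivers the sharp $h_K$-weighted volumetric and face residuals appearing in $\eta_0(K)$, and one is forced to integrate by parts (hence to assume $f\in H^1\Lambda^k(K)$) precisely in order to recover that $h_K$ scaling; for $k=n$ the companion $z$ is only $L_2$ (in fact a constant form), no power of $h_K$ is available, and the coarser unweighted residual $\|f-d\sigma_h-p_h\|_K$ is the best one can do. The remaining work — keeping track of which traces are single-valued versus which carry genuine jumps, and matching powers of $h_K$ between the residual and the interpolation factors — is routine bookkeeping of the kind already carried out in the proof of Lemma \ref{lem4-1}.
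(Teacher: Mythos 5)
Your proof follows the paper's argument essentially verbatim for $0 \le k \le n-1$: the same regular decomposition $v = d\varphi + z$ from Lemma \ref{lem3-1}, the same elementwise integration by parts producing the volumetric residuals $\delta(f - d\sigma_h - p_h)$ and $f - d\sigma_h - p_h - \delta d u_h$ together with the face jumps $\llbracket \trace\star(f-d\sigma_h-p_h)\rrbracket$ and $\llbracket\trace\star d u_h\rrbracket$, and the same use of \eqref{eq3-8} and Cauchy--Schwarz to close. The only (harmless) deviation is at $k=n$, where the paper invokes Galerkin orthogonality to reduce the left side to $\la f - d\sigma_h - p_h, v\ra$ and concludes in one line, whereas you bound the two pieces of the regular decomposition separately via the $L_2$-boundedness of $\Pi_h$ and \eqref{eq3-9-a}; both routes yield the same unweighted residual $\|f-d\sigma_h-p_h\|$.
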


\begin{proof}
For $k=n$, the term $\la d u_h, d(v-\Pi_h v) \ra$ is vacuous, and Galerkin orthogonality implies that
$\la f-d\sigma_h-p_h, v-\Pi_h v \ra   = \la f-d \sigma_h -p_h, v \ra \lesssim \left (  \sum_{K \in \T_h} \eta_0(K)^2\right )^{1/2}$.

For $0 \le k \le n-1$, noting that $d(v-\Pi_h v)=d (z-\Pi_h z + d (\varphi -\Pi_h \varphi))=d (z-\Pi_h z)$ and integrating by parts yields
\begin{align}\label{eq4-26}
\begin{aligned}
\la d u_h, d(v-\Pi_h v) \ra & =  \la d u_h, d(z-\Pi_h z) \ra
\\ &  = \sum_{K \in \T_h} \la \delta d u_h, z-\Pi_h z \ra + \int_{\partial K} \trace (z-\Pi_h z) \wedge \trace \star d u_h.
\end{aligned}\end{align}
For $k=0$ both $\varphi$ and $\sigma_h$ are vacuous, so we may complete the proof by employing \eqref{eq4-26} and proceeding as in \eqref{eq4-4} and \eqref{eq4-5} to obtain
\begin{align}\label{eq4-27}
\begin{aligned}
 \la f - & d \sigma_h -p_h, v - \Pi_h v \ra  - \la d  u_h, d (v- \Pi_h v) \ra 
 \\ & =\sum_{K \in \T_h} \la f-p_h -\delta d u_h, v-\Pi_h v \ra - \int_{\partial K} \trace(v-\Pi_h v) \wedge \trace \star d u_h
 \\ & \lesssim \Big (\sum_{K \in \T_h} \eta_0(K)^2 \Big )^{1/2} \|v\|_{H^1(\Omega)} \lesssim  \Big (\sum_{K \in \T_h} \eta_0(K)^2 \Big )^{1/2}.
\end{aligned}
\end{align}

For $1 \le k \le n-1$, we write $v=d\varphi+z$ as in Lemma \ref{lem3-1} and employ \eqref{eq4-26} to find
\begin{align} \label{eq4-28}
\begin{aligned}
& \la  f -  d \sigma_h -p_h, v - \Pi_h v \ra  - \la d  u_h, d (v- \Pi_h v) \ra 
\\ & = \Big [  \la f-d\sigma_h -p_h, d (\varphi-\Pi_h \varphi) \ra_K \Big ]
\\&  ~~ + \Big [ \sum_{K \in \T_h} \la f-d \sigma_h -p_h -\delta d u_h, z- \Pi_h z \ra_K 
\int_{\partial K} \trace (z-\Pi_h z) \wedge \trace  \star du_h\Big ] 
\\ & \equiv [I]+[II].
\end{aligned}
\end{align}
The term $II$ above may be manipulated as in \eqref{eq4-27} above in order to obtain
\begin{align}\label{eq4-29}
II \lesssim \Big ( \sum_{K \in \T_h}  h_K^2 \|f-d\sigma_h -p_h -\delta d u_h\|_K^2 + h_K\|\llbracket \trace \star d u_h \rrbracket \|_{\partial K} \Big ) ^{1/2}. 
\end{align}

We now turn our attention to the term $I$.  Integrating by parts while proceeding as in \eqref{eq4-4} and \eqref{eq4-5} yields
\begin{align}\label{eq4-30}
\begin{aligned}
I & = \sum_{K \in \T_h} \la \delta (f-d\sigma_h -p_h), \varphi - \Pi_h \varphi \ra_K 
\\ & ~~~~+ \int_{\partial K} \trace (\varphi - \Pi_h \varphi)\wedge \trace \star (f-d \sigma_h - p_h).
\\ & \lesssim  \Big ( \sum_{K \in \T_h} h_K^2 \|\delta (f-d \sigma_h -p_h)\|_K^2 + h_K \|\llbracket \trace \star(f-d\sigma_h -p_h) \rrbracket \|_{\partial K}^2 \Big ) ^{1/2}.  
\end{aligned}
\end{align}
Combining \eqref{eq4-29} and \eqref{eq4-30} yields \eqref{eq4-24} for $1 \le k \le n-1$, completing the proof.  
\end{proof}

We finally remark on an important feature of our estimators.  The term $h_K \|\delta (f-d \sigma_h-p_h)\|_K+ h_K^{1/2} \|\llbracket \trace \star (f-d \sigma_h -p_h ) \rrbracket \|_{\partial K}$ is in a sense undesirable because it requires higher regularity of $f$ than merely $f \in L_2$.  In particular, evaluation of the first term requires $f \in H^* \Lambda^k(K)$  for each $K \in \T_h$, and evaluation of the trace term requires $\trace \star f \in L_2\Lambda (\partial K)$ for each $K$.  (Note however that $f$ is not included in the jump terms if $f \in H^* \Lambda^k(\Omega)$.)  Both relationships are implied by $f \in H^1 \Lambda^k(K)$, $K \in \T_h$, so we simply make this assumption.  

In order to understand why such terms appear, note that the Hodge decomposition of $f$ reads $f=d \sigma + p + \delta d u$.  The first two terms $d \sigma+p$ are approximated {\it directly in $L_2$} in the mixed method by $d \sigma_h + p_h$, while the latter term $\delta d u$ is only approximated {\it weakly in a negative order norm} (roughly speaking, in the space dual to $H \Lambda_k^*$) in the mixed method.  In our indicators, $\|(d \sigma+p)-(d\sigma_h + p_h)\|_K$ is thus a naturally scaled and efficient residual for the mixed method, but $\|\delta d u-\delta d u_h\|_K$ is one Sobolev index too strong.  The latter term should instead be multiplied by a factor of $h_K$ in order to mimic a norm with Sobolev index $-1$, as in the term $h_K \|f-d \sigma_h -p_h -\delta d u_h\|_{K}$ appearing in $\eta_0$.  

This ``Hodge imbalance'' implies that it is necessary to carry out a Hodge decomposition of $f$ in order to obtain error indicators that are correctly scaled for all variables. When this decomposition is unavailable a priori, the Hodge decomposition must be carried out weakly in order to obtain a {\it computable} and {\it reliable} estimator in which the appropriate parts of the Hodge decomposition of $f$ are scaled correctly.  This is accomplished above.  Since $\delta (\delta d u)=0$, $h_K \|\delta (f-d \sigma_h -p_h)\|_{K} =h_K (\|\delta d e_\sigma+ \delta e_p\|_K)$.  This scales roughly as a Sobolev norm with order $-1$ of $\delta d e \sigma$ and $d e_p$, which in turn scales as the terms $\|d e_\sigma\|+ \|e_p\|$ appearing in the original error we seek to bound.  For an element face $e \in \partial \Omega$, \eqref{eq3-7} along with $\trace \star p=0$ on $\partial \Omega$ imply that $\llbracket \trace \star (f-d\sigma_h-p_h) \rrbracket =\trace \star (d\sigma-d\sigma_h -p_h)$ on $\partial \Omega$. Similarly, for an interior face $e$ we have $\llbracket \trace \star f \rrbracket = \llbracket \trace \star  (d\sigma-d\sigma_h-p_h) \rrbracket$. 

If a partial Hodge decomposition of $f$ is known, it is possible to redefine $\eta_0$ so that only $f \in L_2$ is required.  If $f=d \sigma + \psi$ with $\psi = p + \delta d u$ known a priori, we may replace $h_K \|\delta (f-d \sigma_h-p_h)\|_K+ h_K^{1/2} \|\llbracket \trace \star (f-d \sigma_h -p_h ) \rrbracket \|_{\partial K}$ with $h_K \|\delta p_h \|_K +  \|d \sigma-d \sigma_h \|_K+ h_K^{1/2} \|\llbracket \trace \star p_h \rrbracket \|_{\partial K}$.  If $f = \Theta + \delta d u$ with $\Theta=d \sigma + p$ known a priori, we may instead replace this term with $\|\Theta-d \sigma_h -p_h \|_K$.  We do not assume such a decomposition is known, since it if were one would likely decompose the Hodge Laplace problem into $\B$ and $\B^*$ problems, as described in \cite{ArFaWi2010}.  

 
We finally note that a similar situation occurs in residual-type estimates for the time-harmonic Maxwell problem $\curl \curl u -\omega^2 u=f$, where the elementwise indicators include a term $h_K \|\Div (f + \omega^2 u_h)\|_K + h_K^{1/2} \|(f+\omega^2 u_h)\cdot n\|_{\partial K}$ (cf. \cite{ZCSWX11}).  There however the assumption $\Div f \in L_2$ is natural as it represents the charge density, and to our knowledge the connection of this term with the Hodge decomposition has not been previously explained.

\subsection{Reliability:  Harmonic terms}

Next we turn to bounding the terms in \eqref{eq2-23} related to harmonic forms.  
\begin{lemma}\label{lem4-3}
Given $q_h \in V_h^k$ and $K \in \T_h$, let 
\begin{align}\label{eq4-50}
\eta_\H (K, q_h)=h_K \|\delta q_h \|_K + h_K^{1/2} \|\llbracket \trace \star q_h \rrbracket \|_{\partial K}.
\end{align}
Then if $1 \le k \le n$ and $\phi \in H \Lambda^{k-1}(\Omega)$ with $\|\phi \|_{H \Lambda^{k-1} (\Omega)}=1$,  
\begin{align} \label{eq4-51}
\la q_h , d ( \phi -\Pi_h \phi) \ra \lesssim  \Big ( \sum_{K \in \T_h} \eta_\H (K, q_h)^2 \Big ) ^{1/2}.
\end{align}
Given an orthonormal basis $\{q_1, ..., q_M\}$ for $\H_h^k$, let $\mu_i=\big ( \sum_{K \in \T_h} \eta_\H (K, q_i)^2 \big )^{1/2}$.  Then we additionally have
\begin{align} \label{eq4-52}
{\rm gap}(\H^k, \H_h^k) \lesssim \mu :=  \left (  \sum_{i=1}^M \mu_i^2 \right ) ^{1/2}.  
\end{align}
Finally, if $u_h^\perp \in \Z_h^{k \perp}$,
\begin{align}
\label{eq4-52-a}
\|P_\B u_h^\perp \| \lesssim \Big (\sum_{K \in \T_h} \eta_\H (K, u_h^\perp)^2 \Big)^{1/2}.  
\end{align}
\end{lemma}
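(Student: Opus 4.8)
The plan is to prove the pointwise-in-$\phi$ estimate \eqref{eq4-51} first, by the same integration-by-parts argument already used in Lemmas~\ref{lem4-1} and~\ref{lem4-2}, and then to obtain \eqref{eq4-52} and \eqref{eq4-52-a} essentially for free by feeding \eqref{eq4-51} into the abstract framework of \S\ref{subsec_harmonic}.

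For \eqref{eq4-51}: since $1\le k\le n$ and $\|\phi\|_{H\Lambda^{k-1}(\Omega)}=1$, I would invoke the last approximation statement of Lemma~\ref{lem3-1}, namely \eqref{eq3-9-b}, to produce $\varphi\in H^1\Lambda^{k-1}(\Omega)$ with $d\varphi=d\phi$, $d\Pi_h\phi=d\Pi_h\varphi$, and $\sum_{K\in\T_h}[h_K^{-2}\|\varphi-\Pi_h\varphi\|_K^2+h_K^{-1}\|\trace(\varphi-\Pi_h\varphi)\|_{\partial K}^2]\lesssim 1$. Hence $d(\phi-\Pi_h\phi)=d(\varphi-\Pi_h\varphi)$. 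I would then split the pairing $\langle q_h,d(\varphi-\Pi_h\varphi)\rangle$ over elements and integrate by parts on each $K$ via \eqref{eq3-3} (legitimate elementwise since $q_h$ is a polynomial, hence in $H^1\Lambda^k(K)$, on $K$):
\[
\langle q_h,d(\varphi-\Pi_h\varphi)\rangle=\sum_{K\in\T_h}\Big[\langle\delta q_h,\varphi-\Pi_h\varphi\rangle_K+\int_{\partial K}\trace(\varphi-\Pi_h\varphi)\wedge\trace\star q_h\Big].
\]
Exactly as in \eqref{eq4-4}, the sum of boundary integrals is rewritten as a sum over faces $e\in\E_h$, using that $\trace(\varphi-\Pi_h\varphi)$ is single-valued on interior faces (because $\varphi\in H^1\Lambda^{k-1}$ and $\Pi_h\varphi\in H\Lambda^{k-1}$) while $\trace\star q_h$ carries the jump $\llbracket\trace\star q_h\rrbracket$ (equal to $\trace\star q_h$ on $\partial\Omega$). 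A scaled elementwise Cauchy--Schwarz step as in \eqref{eq4-5}, followed by the discrete Cauchy--Schwarz inequality and \eqref{eq3-9-b}, bounds the right-hand side by $(\sum_{K\in\T_h}\eta_\H(K,q_h)^2)^{1/2}$, which is \eqref{eq4-51}.

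For \eqref{eq4-52}: on $V^{k-1}=H\Lambda^{k-1}(\Omega)$ the norm $\|\cdot\|_V$ is precisely the graph norm $\|\cdot\|_{H\Lambda^{k-1}}$, so \eqref{eq4-51} applied to the orthonormal basis $\{q_1,\dots,q_M\}$ of $\H_h^k$ shows that the hypotheses \eqref{eq2-14} hold with $\mu_i=(\sum_K\eta_\H(K,q_i)^2)^{1/2}$; then \eqref{eq2-15} gives $\delta(\H_h^k,\H^k)\lesssim|\vec{\mu}|=\mu$, and \eqref{eq2-16} (which rests on Lemma~\ref{lem_gap}) upgrades this to ${\rm gap}(\H^k,\H_h^k)\lesssim\mu$. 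For \eqref{eq4-52-a}, note $u_h^\perp\in\Z_h^{k\perp}\subset V_h^k$, so \eqref{eq4-51} applies verbatim with $q_h=u_h^\perp$; combining it with \eqref{eq2-20}, which bounds $\|P_\B u_h^\perp\|$ by $\sup_{\phi\in V^{k-1},\,\|\phi\|_V=1}\langle u_h^\perp,d(\phi-\Pi_h\phi)\rangle$, yields \eqref{eq4-52-a} at once. I expect no serious obstacle here: the calculations are routine adaptations of Lemmas~\ref{lem4-1}--\ref{lem4-2}, and the only point needing care is the elementwise integration by parts together with the bookkeeping that $\delta q_h$ is the piecewise codifferential and $\llbracket\trace\star q_h\rrbracket$ the face jumps, so that $\eta_\H$ genuinely measures the failure of $q_h$ to be a \emph{continuous} harmonic form (if $q_h\in\H^k$ both terms of $\eta_\H$ vanish and the right-hand side of \eqref{eq4-51} is $0$).
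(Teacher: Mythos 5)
Your proposal is correct and follows essentially the same route as the paper: the authors likewise invoke \eqref{eq3-9-b} to replace $\phi$ by a regular potential $\varphi$ with $d\varphi=d\phi$, integrate by parts elementwise as in \eqref{eq4-26}--\eqref{eq4-27} to produce the $\delta q_h$ and $\llbracket \trace\star q_h\rrbracket$ terms, and then deduce \eqref{eq4-52} from \eqref{eq2-16} and \eqref{eq4-52-a} from \eqref{eq2-20}. No discrepancies to report.
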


\begin{proof}
Let $\varphi \in H^1\Lambda^{k-1}(\Omega)$ boundedly solve $d \varphi= d \phi$, as in \eqref{eq3-9-b} and preceding of Lemma \ref{lem3-1}.  Employing \eqref{eq3-9-b}, integrating by parts as in \eqref{eq4-26}, and proceeding as in \eqref{eq4-27} immediately yields
\begin{align}\label{eq4-54}
\begin{aligned}
\la q_h, d (\phi -\Pi_h \phi) \ra & = \sum_{K \in \T_h} \la \delta q_h, \varphi- \Pi_h \varphi \ra_K + \int_{\partial K} \trace (\varphi -\Pi_h \varphi) \wedge \trace \star q_h
\\ & \lesssim \Big ( \sum_{K \in \T_h} \eta_\H (K, q_h)^2 \Big)^{1/2}.  
\end{aligned}
\end{align}
\eqref{eq4-52} immediately follows from \eqref{eq2-16} and \eqref{eq4-51}, while \eqref{eq4-52-a} follows from \eqref{eq2-20} and \eqref{eq4-51}.
\end{proof}

\subsection{Summary of reliability results}  We summarize our reliability results in the following theorem.

\begin{theorem} \label{t4-4} Assume that $\Omega\subset \mathbb{R}^n$ is a bounded Lipschitz domain of arbitrary topological type.  Let $0 \le k \le n$.  Let $\eta_{-1}$ be as defined in Lemma \ref{lem4-1}, let $\eta_0$ be as defined in Lemma \ref{lem4-2}, and let $\eta_{\H}$ be as defined in Lemma \ref{lem4-3}.  Let also $\{q_1, ... q_M\}$ be an orthonormal basis for $\H_h^k$ and let $\mu$ be as in \eqref{eq4-52}.   Then 
\begin{align}\label{eq4-60}
\begin{aligned}
& \|  e_\sigma\|_{H \Lambda^{k-1}(\Omega)}  + \|e_u\|_{H \Lambda^k (\Omega)} + \|e_p\| 
\\ &~~~~ \lesssim \Big ( \sum_{K \in \T_h} \eta_{-1}(K)^2 + \eta_0(K)^2+ \eta_\H(p_h)^2 \Big)^{1/2} +  \mu  \|u_h\|.  
\end{aligned}
\end{align}
 Let also $u_h^\perp$ be the projection of $u_h$ onto $\Z_h^{k \perp}$.  Then the term $\mu \|u_h\|$ in \eqref{eq4-60} may be replaced by 
 \begin{align}\label{eq4-61}
\mu \Big (  \sum_{K \in \T_h} \eta_\H (K, u_h^\perp)^2 \Big )^{1/2} + \mu^2 \|u_h\|.
  \end{align}
\end{theorem}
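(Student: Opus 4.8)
The plan is to assemble the abstract summary estimate \eqref{eq2-23} with the three concrete bounds of Lemmas \ref{lem4-1}, \ref{lem4-2}, and \ref{lem4-3}, after first checking that the quasi-interpolant $\Pi_h$ of Lemma \ref{lem3-1} is admissible as the abstract cochain operator $\pi_h$ appearing in \eqref{eq2-23}. Commutativity $d\Pi_h = \Pi_h d$ is part of the statement of Lemma \ref{lem3-1}, and $V$-boundedness follows from the local $L_2$-stability $\|\Pi_h\omega\|_K \lesssim \|\omega\|_{\omega_K}$ established in its proof together with commutativity, since $\|\Pi_h v\|_V^2 = \|\Pi_h v\|^2 + \|\Pi_h dv\|^2 \lesssim \|v\|^2 + \|dv\|^2 = \|v\|_V^2$; recall that for the de Rham complex the abstract $V$-norm is precisely the graph norm $\|\cdot\|_H$. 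With this identification, \eqref{eq2-23} furnishes a triple $(\tau, v, q) \in H\Lambda^{k-1} \times H\Lambda^k \times \H^k$ with $\|\tau\|_H + \|v\|_H + \|q\| = 1$ and a form $\phi \in H\Lambda^{k-1}$ with $\|\phi\|_H = 1$ such that $\|e_\sigma\|_H + \|e_u\|_H + \|e_p\|$ is bounded by the four terms on the right of \eqref{eq2-23} (with $\pi_h$ read as $\Pi_h$ and $\mu$ as defined there).

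Next I would estimate these four terms in turn. Because $\|\tau\|_H \le 1$, Lemma \ref{lem4-1} bounds the first term $|\la e_\sigma, \tau - \Pi_h\tau\ra - \la d(\tau - \Pi_h\tau), e_u\ra|$ by $\left(\sum_{K \in \T_h} \eta_{-1}(K)^2\right)^{1/2}$, the term being vacuous when $k = 0$. Because $\|v\|_H \le 1$ and $f \in H^1\Lambda^k(K)$ for every $K \in \T_h$ (the regularity hypothesis under which $\eta_0$ is defined in Lemma \ref{lem4-2}), Lemma \ref{lem4-2} bounds the second term by $\left(\sum_{K \in \T_h} \eta_0(K)^2\right)^{1/2}$. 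For the third term we apply \eqref{eq4-51} of Lemma \ref{lem4-3} with $q_h = p_h \in V_h^k$ and the unit test form $\phi$ (valid for $1 \le k \le n$; for $k = 0$ the term is vacuous), obtaining $|\la p_h, d(\phi - \Pi_h\phi)\ra| \lesssim \left(\sum_{K \in \T_h} \eta_\H(K, p_h)^2\right)^{1/2}$. Finally, \eqref{eq2-23} offers the crude choice $\|u_h^\perp - P_{\Z^\perp}u_h^\perp\| \le \|u_h\|$ for the last term, and by \eqref{eq4-52} its constant $\mu = \left(\sum_{i=1}^M \mu_i^2\right)^{1/2}$, with $\mu_i = \left(\sum_{K \in \T_h}\eta_\H(K,q_i)^2\right)^{1/2}$, is exactly the $\mu$ occurring in \eqref{eq2-23}. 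Adding the four bounds gives \eqref{eq4-60}.

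For the sharper version I would instead use the alternative bound $\mu\|u_h^\perp - P_{\Z^\perp}u_h^\perp\| \lesssim \mu\epsilon + \mu^2\|u_h\|$ allowed in \eqref{eq2-23}, where by \eqref{eq2-24} one may take $\epsilon$ to be any bound for $\sup_{\phi \in H\Lambda^{k-1},\,\|\phi\|_H = 1}\la u_h^\perp, d(\phi - \Pi_h\phi)\ra$. Since $u_h^\perp \in \Z_h^{k\perp} \subset V_h^k$, inequality \eqref{eq4-51} (equivalently \eqref{eq4-52-a}) applied with $q_h = u_h^\perp$ shows that $\epsilon = \left(\sum_{K \in \T_h}\eta_\H(K, u_h^\perp)^2\right)^{1/2}$ is an admissible choice, and substituting it converts the last term into the expression \eqref{eq4-61}, completing the proof. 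Essentially all of the analysis has been done in the preceding lemmas; the points requiring care are the bookkeeping of the degenerate endpoints ($k = 0$, where the $\tau$- and $\phi$-terms drop out and the harmonic errors vanish, and $k = n$, where $p = p_h = 0$, $\mu = 0$, and the $du_h$-term vanishes), the matching of the unit-norm hypotheses of Lemmas \ref{lem4-1}--\ref{lem4-3} with the normalization of the test tuple coming from \eqref{eq2-23}, and the verification that $\Pi_h$ meets the abstract requirements on $\pi_h$ --- this last being the main, if mild, obstacle. For essential boundary conditions the same argument applies verbatim once boundary edge-jump terms are set to zero, as noted at the beginning of \S\ref{sec_reliability}.
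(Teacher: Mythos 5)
Your proposal is correct and follows essentially the same route as the paper, which likewise bounds the four terms of \eqref{eq2-23} by Lemmas \ref{lem4-1}, \ref{lem4-2}, \ref{lem4-3}, and \ref{lem4-3} again (the paper's proof is a one-sentence version of your argument). Your additional verification that $\Pi_h$ qualifies as the abstract cochain operator and your bookkeeping of the endpoint cases are details the paper leaves implicit, not a different method.
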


\begin{proof}
The four terms on the right hand side of \eqref{eq2-23} may be bounded by employing Lemma \ref{lem4-1}, Lemma \ref{lem4-2}, Lemma \ref{lem4-3}, and once again Lemma \ref{lem4-3}, respectively.
\end{proof}


\section{Efficiency of a posteriori error estimators}\label{sec_efficiency}

We consider efficiency of the various error indicators employed in \S\ref{sec_reliability} in turn.  Before doing so, we provide some context for our proofs along with some basic technical tools.

Efficiency results for residual-type a posteriori error estimators such as those we employ here are typically proved by using the ``bubble function'' technique of Verf\"urth \cite{Ver89} (cf. \cite{Ca97, BHHW00} for applications of this technique in mixed methods for the scalar Laplacian and electromagnetism).  Given $K \in \T_h$, let $b_K$ be the bubble function of polynomial degree $n+1$ obtained by multiplying the barycentric coordinates of $K$ together and scaling so that $\max_{x \in K} b_K=1$.  Extending by 0 outside of $K$ yields $b_K \in W_\infty^1(\Omega)$ with ${\rm supp}(b_K)=K$.  Similarly, given an $n-1$-dimensional face $e=K_1 \cap K_2$, where $K_1, K_2 \in \T_h$ and $K_2$ is void if $e \subset \partial \Omega$, we obtain an edge bubble function $b_e$ defined on $K_1, K_2$ by multiplying together the corresponding barycentric coordinates except that corresponding to $e$ and scaling so that $\max_{K_i} b_e=1$. 

Given a polynomial form $v$ of arbitrary but uniformly bounded degree defined on either $K \in \T_h$ or a face $e \subset K \in \T_h$, 
\begin{align} \label{eq5-1}
\|v \|_{K} \simeq \|\sqrt{b_K} v\|_{K},~~\|v\|_{e} \simeq \|\sqrt{b_e} v\|_{e}.
\end{align}
Also, given a polynomial $k$-form $v$ defined on a face $e=K_1 \cup K_2$, we wish to define a polynomial extension $\chi_v$ of $v$ to $K_1 \cup K_2$.   First extend $v$ in the natural fashion to the plane containing $e$.   We then extend $v$ to $K_i$, $i=1,2$ by taking $\chi_v$ to be constant in the direction normal to $e$.   Shape regularity implies that $e$, $K_1$, and $K_2$ are all contained in a ball having diameter equivalent to $h_K=h_{K_1} \simeq h_{K_2}$, so that an elementary computation involving inverse inequalities yields
\begin{align}\label{eq5-2}
\|\chi_v\|_{L_2(K_1 \cup K_2)} \lesssim h_K^{1/2} \|v\|_{L_2(e)}.
\end{align}

\subsection{Efficiency of $\eta_{-1}$}  We first consider the error indicator $\eta_{-1}$.  

\begin{lemma} \label{lem5-1}
Let $K \in \T_h$.  Then for $1 \le k \le n$,
\begin{align} \label{eq5-3}
\eta_{-1}(K) \lesssim \|e_\sigma\|_{\omega_K}+ \|e_u\|_{\omega_K}.
\end{align}
\end{lemma}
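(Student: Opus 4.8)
The plan is to bound each term appearing in the definition \eqref{eq4-1} of $\eta_{-1}(K)$ separately, using the standard Verf\"urth bubble-function technique together with the identities $\sigma - \delta u = 0$, $\delta\sigma = 0$, and $\trace\star\sigma = \trace\star u = 0$ satisfied by the exact solution on element boundaries and interfaces (the latter from \eqref{eq3-7}). The key observation throughout is that the discrete quantities $\delta\sigma_h$, $\sigma_h - \delta u_h$, $\trace\star\sigma_h$, and $\trace\star u_h$ are all polynomial forms of uniformly bounded degree, so the norm equivalences \eqref{eq5-1} apply.

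First I would treat the volumetric terms. For the term $h_K\|\sigma_h - \delta u_h\|_K$, I would test with $b_K(\sigma_h - \delta u_h)$: by \eqref{eq5-1}, $\|\sigma_h - \delta u_h\|_K^2 \simeq \langle \sigma_h - \delta u_h, b_K(\sigma_h - \delta u_h)\rangle_K$, and since $\sigma - \delta u = 0$ we may insert $-(\sigma - \delta u)$ for free, giving $\langle e_\sigma - \delta e_u + \delta\sigma_h - \delta\sigma_h, \ldots\rangle$ — more precisely $\langle (\sigma_h - \sigma) - \delta(u_h - u), b_K(\sigma_h - \delta u_h)\rangle_K = -\langle e_\sigma, b_K(\ldots)\rangle_K + \langle d(b_K(\sigma_h - \delta u_h)), e_u\rangle_K$, using integration by parts \eqref{eq3-3} (the boundary term vanishes since $b_K$ has support in $K$). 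An inverse inequality bounds $\|d(b_K w)\|_K \lesssim h_K^{-1}\|w\|_K$, so after dividing through by $\|\sigma_h - \delta u_h\|_K$ one obtains $h_K\|\sigma_h - \delta u_h\|_K \lesssim \|e_\sigma\|_K + \|e_u\|_K$. The term $h_K\|\delta\sigma_h\|_K$ (present only for $k\ge 2$) is handled identically: test with $b_K\delta\sigma_h$, use $\delta\sigma = 0$ to write $\langle \delta\sigma_h, b_K\delta\sigma_h\rangle_K = -\langle \delta e_\sigma, b_K\delta\sigma_h\rangle_K = -\langle e_\sigma, d(b_K\delta\sigma_h)\rangle_K$, and apply the inverse inequality again to get $h_K\|\delta\sigma_h\|_K \lesssim \|e_\sigma\|_K$.

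Next I would treat the face/jump terms. For $h_K^{1/2}\|\llbracket\trace\star u_h\rrbracket\|_{\partial K}$, I would work face by face: fix an interior face $e = K_1\cap K_2$, extend the polynomial form $\llbracket\trace\star u_h\rrbracket$ to a neighborhood via the extension $\chi$ of \eqref{eq5-2}, multiply by the edge bubble $b_e$, and pull back appropriately to get a form $w_e$ supported on $K_1\cup K_2$ whose trace on $e$ is (a bubble-weighted version of) the jump and which vanishes on all other faces. Summing the integration-by-parts identity \eqref{eq3-3} over $K_1$ and $K_2$, the only surviving boundary contribution is $\int_e \trace w_e \wedge \llbracket\trace\star u_h\rrbracket$, while on the bulk side one gets $\langle dw_e, u_h\rangle - \langle w_e, \delta u_h\rangle$ over $K_1\cup K_2$; subtracting the corresponding exact identity (whose boundary term is $\int_e \trace w_e \wedge \llbracket\trace\star u\rrbracket = 0$ since $\trace\star u$ is single-valued and in fact zero on $\partial\Omega$) replaces $u_h, \delta u_h$ by $e_u$ and related quantities. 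Using \eqref{eq5-1}–\eqref{eq5-2} and an inverse inequality on $w_e$ to bound $\|w_e\|_{K_1\cup K_2} \lesssim h_K^{1/2}\|\llbracket\trace\star u_h\rrbracket\|_e$ and $\|dw_e\|_{K_1\cup K_2} \lesssim h_K^{-1/2}\|\llbracket\trace\star u_h\rrbracket\|_e$, one arrives at $h_K^{1/2}\|\llbracket\trace\star u_h\rrbracket\|_e \lesssim \|e_u\|_{K_1\cup K_2} + h_K\|\sigma_h - \delta u_h\|_{K_1\cup K_2}$, and the last term was already controlled by $\|e_\sigma\| + \|e_u\|$ on the relevant patch. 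The jump term $h_K^{1/2}\|\llbracket\trace\star\sigma_h\rrbracket\|_{\partial K}$ (for $k\ge 2$) is handled the same way using $\trace\star\sigma = 0$ and the identity $\delta\sigma = 0$, producing a bound by $\|e_\sigma\|$ plus the already-controlled volumetric residual $h_K\|\delta\sigma_h\|$. Summing all contributions over the faces of $K$ and absorbing everything into the patch $\omega_K$ yields \eqref{eq5-3}.

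The main obstacle is the careful construction of the test form $w_e$ associated to a face: one must produce a polynomial differential form supported on the two adjacent simplices whose \emph{trace} on $e$ equals the prescribed jump (or its bubble-weighted version) and whose traces on all other faces vanish, while retaining the scaling estimates \eqref{eq5-1}–\eqref{eq5-2} and compatible inverse inequalities for $dw_e$. This is routine in the scalar case but requires some bookkeeping with pullbacks for general $k$-forms; fortunately the normal-extension construction in \eqref{eq5-2} is exactly tailored to this, and the bubble equivalences \eqref{eq5-1} transfer to polynomial forms componentwise. A secondary point requiring care is that the definition of $\eta_{-1}$ for $k=1$ omits the $\sigma_h$-related terms (since there $z = \tau \in H^1$ and no $\varphi$ appears), so the $k=1$ case is just the subset of the above argument involving only the $\|\sigma_h - \delta u_h\|$ and $\|\llbracket\trace\star u_h\rrbracket\|$ terms.
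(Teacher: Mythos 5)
Your proposal is correct and follows essentially the same route as the paper: interior bubble functions $b_K$ applied to $\sigma_h-\delta u_h$ and $\delta\sigma_h$ for the volumetric terms, and edge bubbles combined with the normal polynomial extension of \eqref{eq5-2} (applied to the Hodge-star preimage of the jump) plus elementwise integration by parts \eqref{eq3-3} and the exact relations $\sigma=\delta u$, $\delta\sigma=0$, $\llbracket\trace\star u\rrbracket=\llbracket\trace\star\sigma\rrbracket=0$ for the face terms, with the previously established volumetric bound absorbing the residual $h_K\|\sigma_h-\delta u_h\|$ that survives in the jump estimate. The only (harmless) imprecision is that your stated intermediate face bound omits the $\|e_\sigma\|_{K_1\cup K_2}$ contribution that arises from inserting the exact first equation of \eqref{eq2-2}, but this term is present in the final right-hand side anyway.
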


\begin{proof}
We begin with the term $h_K \|\sigma_h-\delta u_h\|_K$.  Let $\psi = b_K (\sigma_h -\delta u_h) \in H \Lambda^{k-1}$; note that $\trace_{\partial K} \psi=0$.  Employing \eqref{eq5-1}, the first line of \eqref{eq2-2}, and the integration-by-parts formula \eqref{eq3-3}, we obtain
\begin{align} \label{eq5-4}
\begin{aligned}
\|\sigma_h-\delta u_h\|_K^2 & \simeq \la \sigma_h-\delta u_h, \psi \ra=\la \sigma_h -\sigma, \psi \ra + \la d \psi, u-u_h \ra 
\\ &  \lesssim \|e_\sigma \|_K \|\psi\|_K + \|e_u\|_{K} \|d \psi\|_K.
\end{aligned}
\end{align}
Employing an inverse inequality and $b_K \le 1$ yields $\|d \psi \|_K \lesssim h_K^{-1} \|\psi \|_K \lesssim h_K^{-1} \|\sigma_h -\delta u_h \|_K$.  Multiplying \eqref{eq5-4} through by $h_K/\|\sigma_h-\delta u_h\|_K$ while noting that $h_K \lesssim 1$ yields
\begin{align} \label{eq5-5}
h_K \|\sigma_h - \delta u_h \|_K \lesssim \|e_\sigma \|_K + \|e_u\|_K.
\end{align}
Let now $k \ge 2$.  Recall that $\delta \sigma=\delta \delta u=0$.  Thus with $\psi = b_K \delta \sigma_h$, we have
\begin{align}\label{eq5-6}
\begin{aligned}
\|\delta \sigma_h \|_K^2 &  \simeq \la \delta \sigma_h, \psi \ra= \la \delta (\sigma_h -\sigma), \psi \ra=\la \sigma_h -\sigma, d \psi \ra 
\\ & \lesssim h_K^{-1} \|e_\sigma \|_K \|\psi \|_K \lesssim h_K^{-1} \| e_\sigma\|_K \|\delta \sigma_h \|_K.  
\end{aligned}
\end{align}
Multiplying through by $h_K/\|\delta \sigma_h \|_K$ yields
\begin{align}\label{eq5-7}
h_K \|\delta \sigma_h \|_K \lesssim \|e_\sigma \|_K.
\end{align}

We now consider edge terms.  Note from $\eqref{eq3-5}$ and the fact that $u \in H^*\Lambda^{k}(\Omega)$ (since $\delta u= \sigma \in L_2 \Lambda^{k-1}(\Omega)$) that we always have $\llbracket \trace \star u \rrbracket =0$ (suitably interpreted in $H^{-1/2}$).  Let $\mathcal{E}_h \ni e =K_1 \cap K_2$, where $K_2=\emptyset$ if $e \subset \partial \Omega$.  $\llbracket \trace \star u_h \rrbracket \in \Lambda^k(e)$, so we let $\psi \in \Lambda^{n-1-k}(e)$ satisfy $\star \psi = \llbracket \trace \star u_h \rrbracket$.  The definition of $\star$ implies that $\psi$ is a polynomial form because $ \llbracket \trace \star u_h \rrbracket$ is.  Note also that multiplication by $b_e$ commutes with $\trace$ and $\star$ since both are linear operations, so that $b_e \star \psi = \star (b_e \psi)=\star \trace (b_e \chi_\psi)$.  Employing the polynomial extension $\chi_\psi$ defined in \eqref{eq5-2} and surrounding along with the second relationship in \eqref{eq3-1-a} thus yields
\begin{align}\label{eq5-8}
\begin{aligned}
\|\llbracket \trace \star u_h \rrbracket \|_e^2  & \simeq \la b_e \star \psi, \llbracket \trace \star u_h \rrbracket \ra_e
\\ &  =\la  \star \trace (b_e \chi_\psi), \llbracket \trace \star u_h \rrbracket \ra_e = \int_e  \trace (b_e \chi_\psi) \wedge \llbracket \trace \star u_h \rrbracket.  
\end{aligned}
\end{align}  
Employing the integration-by-parts formula \eqref{eq3-3} individually on $K_1$ and $K_2$ yields
\begin{align}\label{eq5-9}
 \int_e  \trace (b_e \chi_\psi) \wedge \llbracket \trace \star u_h \rrbracket= \la d(b_e \chi_\psi), u_h \ra_{K_1 \cup K_2} - \la b_e \chi_\psi , \delta_h u_h \ra_{K_1 \cup K_2}.
\end{align}
Here $\delta_h$ is $\delta$ computed elementwise, which is necessary because $u_h \notin H^*\Lambda^k$ globally.  Also, $b_e \chi_\psi \in H \Lambda^{k-1}(\Omega)$ with support in $K_1 \cup K_2$.  Inserting the relationship $\la \sigma, b_e \chi_\psi \ra - \la d (b_e \chi_\psi), u \ra =0$ into \eqref{eq5-9} and using an inverse inequality $\|d (b_e \chi_\psi)\|_K \lesssim h_K^{-1} \|b_e \chi_\psi\|_K$, \eqref{eq5-2}, and the Hodge star isometry relationship $\|\psi\|_e \simeq \|\llbracket \trace \star u_h \rrbracket \|_e$ then yields
\begin{align}\label{eq5-10}
\begin{aligned}
\| \llbracket   & \trace \star   u_h  \rrbracket   \|_e^2  \simeq \la \sigma, b_e \chi_\psi \ra  - \la b_e \chi_\psi , \delta_h u_h \ra+\la d (b_e \chi_\psi), u_h-u \ra
\\ & = \la e_\sigma, b_e \chi_\psi \ra -\la b_e \chi_\psi, \sigma_h -\delta_h u_h \ra + \la d(b_e \chi_\psi), e_u \ra
 \\ & \lesssim \|b_e \chi_\psi \|_{K_1 \cup K_2} ( \|e_\sigma\|_{K_1 \cup K_2}  +h_K^{-1}\|e_u \|_{K_1 \cup K_2} + \|\sigma_h -\delta_h u_h \|_{K_1 \cup K_2})   
 \\ & \lesssim h_K^{1/2} \|\llbracket \trace \star u_h \rrbracket \|_{e} ( \|e_\sigma\|_{K_1 \cup K_2} + h_K^{-1}\|e_u\|_{K_1 \cup K_2} +\|\sigma_h -\delta u_h \|_{K_1 \cup K_2}).  
\end{aligned}
\end{align}
Multiplying both \eqref{eq5-10} through by $h_K^{1/2} / \|\llbracket \trace \star u_h \rrbracket \|_e$ and employing \eqref{eq5-5} thus finally yields
\begin{align} \label{eq5-12}
h_K^{1/2} \| \llbracket  \trace \star   u_h  \rrbracket   \|_e \lesssim \|e_u\|_{K_1 \cup K_2} + \|e_\sigma\|_{K_1 \cup K_2}.
\end{align}

A similar computation yields
\begin{align}\label{eq5-14}
h_K^{1/2} \|\llbracket \trace \star \sigma_h \rrbracket \|_e \lesssim \|e_\sigma\|_{K_1 \cup K_2},
\end{align}
thus completing the proof of Lemma \ref{lem5-1}.
\end{proof}

\subsection{Efficiency of $\eta_0$}

We next consider the various error indicators  $\eta_0$ in Lemma \ref{lem4-2}.  
First we define three types of data oscillation.  First, 
\begin{align}\label{eq5-15}
\osc (K)=h_K \|f-P f\|_K, 
\end{align}
where $P f$ is the $L_2$ projection of $f$ onto a space of polynomial $k$-forms of fixed but arbitrary degree.  Note that $P f$ is in general globally discontinuous.   
We do not specify the space further, since it is only necessary that it be finite dimensional in order to allow the use of inverse inequalities.  Similarly, we define the edge oscillation
\begin{align}\label{eq5-15-a}
\osc (\partial K)=h_K ^{1/2} \|\llbracket \trace \star (f-P f)\rrbracket \|_{L_2(\partial K)}.
\end{align}
Finally, we define
\begin{align}\label{eq5-15-b}
\osc_\delta (K)=h_K \|\delta (f-P f)\|_{L_2(K)}.
\end{align}
For a mesh subdomain $\omega$ of $\Omega$, let $\osc (\omega)= \left ( \sum_{T \subset \omega} \osc (K)^2 \right ) ^{1/2}$ and similarly for $\osc _\delta$.  The last two oscillation notions measure oscillation of $d \sigma$ only, since the Hodge decomposition yields $\llbracket \trace \star f \rrbracket = \llbracket \trace \star d \sigma \rrbracket$ and $\delta f = \delta d \sigma$.

\begin{lemma}  Let $K \in \T_h$, and consider the error indicators defined in Lemma \ref{lem4-2}.  For $k=0$, we have
\begin{align}\label{eq5-16}
\eta_0(K) \lesssim \|e_u\|_{H, \omega_K} + \osc(\omega_K)
\end{align}
When $k=n$, 
\begin{align}\label{eq5-17}
\eta_0(K) \lesssim \|d e_\sigma\|_K+ \|e_p\|_K.
\end{align}
For $1 \le k \le n-1$,
\begin{align} \label{eq5-18}
\begin{aligned}
\eta_0(K) \lesssim & \|e_u\|_{H, \omega_K} + \|e_\sigma\|_{H, \omega_K} + \|e_p\|_{\omega_K} 
\\ &~~~~+ \osc(\omega_K) + \osc_\delta (\omega_K) + \osc(\partial K).  
\end{aligned}
\end{align}

\end{lemma}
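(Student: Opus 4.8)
The plan is to run the classical bubble-function efficiency argument of Verf\"urth term by term on the (at most four) residual quantities comprising $\eta_0(K)$, using the weak formulation \eqref{eq2-2} and the identity $f=d\sigma+p+\delta d u$ of \eqref{eq3-4}. I may assume $h_K\lesssim 1$ and will freely use $\|de_\sigma\|_K\le\|e_\sigma\|_{H,K}$ and $\|de_u\|_K\le\|e_u\|_{H,K}$. The case $k=n$ is immediate: there $\H^n=\H_h^n=\{0\}$, so $p=p_h=0$, while $d u=0$ gives $\delta d u=0$; hence $f=d\sigma$ and $\eta_0(K)=\|d\sigma-d\sigma_h\|_K=\|de_\sigma\|_K$, which is \eqref{eq5-17}.

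For the volumetric residual $h_K\|f-d\sigma_h-p_h-\delta d u_h\|_K$ (the $\sigma$-terms being vacuous when $k=0$) I would replace $f$ by its piecewise polynomial $L_2$-projection $Pf$, put $r_K:=Pf-d\sigma_h-p_h-\delta_h d u_h$ (piecewise polynomial, $\delta_h$ elementwise), and test with $\psi=b_K r_K$, so that $\|r_K\|_K^2\simeq\langle\psi,r_K\rangle$ by \eqref{eq5-1} and $\trace_{\partial K}\psi=0$. Since \eqref{eq3-3} gives $\langle\psi,\delta_h d u_h\rangle_K=\langle d\psi,d u_h\rangle_K$ (the boundary term dropping out), testing \eqref{eq2-2} with $v=\psi$ turns $\langle\psi,f-d\sigma_h-p_h-\delta_h d u_h\rangle_K$ into $\langle de_\sigma,\psi\rangle_K+\langle de_u,d\psi\rangle_K+\langle e_p,\psi\rangle_K$; together with $\langle\psi,Pf-f\rangle_K$, the inverse estimate $\|d\psi\|_K\lesssim h_K^{-1}\|r_K\|_K$, and adding back $h_K\|f-Pf\|_K=\osc(K)$, this gives $h_K\|f-d\sigma_h-p_h-\delta d u_h\|_K\lesssim\osc(K)+\|e_\sigma\|_{H,K}+\|e_u\|_{H,K}+\|e_p\|_K$. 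The divergence residual $h_K\|\delta(f-d\sigma_h-p_h)\|_K$ (present only for $1\le k\le n-1$) is handled identically but now with $\psi=b_K\,\delta_h(Pf-d\sigma_h-p_h)$ and one extra integration by parts: moving $\delta$ onto $\psi$ and then testing \eqref{eq2-2} with $v=d\psi$ annihilates $\langle d u,d(d\psi)\rangle$ and leaves $\langle de_\sigma+e_p,d\psi\rangle_K$, while the data contribution is pushed \emph{back} by parts as $\langle Pf-f,d\psi\rangle_K=\langle\delta(Pf-f),\psi\rangle_K$, so that $f-Pf$ enters only through $\osc_\delta(K)$. When $k=0$ the $\sigma$-terms are vacuous and, since $V_h^0$ contains the locally constant forms, $\H_h^0=\H^0$ and hence $e_p=0$ --- this is precisely why only $\|e_u\|_{H,\cdot}$ and $\osc$ remain in \eqref{eq5-16}.

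The jump terms use the edge bubble $b_e$ and the polynomial extension $\chi$ of \eqref{eq5-2}, as in \eqref{eq5-8}--\eqref{eq5-12}. On a face $e=K_1\cap K_2\subset\partial K$, for $\llbracket\trace\star d u_h\rrbracket$ one writes $\|\llbracket\trace\star d u_h\rrbracket\|_e^2\simeq\int_e\trace(b_e\chi_\psi)\wedge\llbracket\trace\star d u_h\rrbracket$ with $\star_{\partial K}\psi=\llbracket\trace\star d u_h\rrbracket$, integrates by parts on $K_1$ and $K_2$ via \eqref{eq3-3} (only the face $e$ survives, since $b_e$ vanishes on the other faces), inserts \eqref{eq2-2} with $v=b_e\chi_\psi$, and rearranges to $\langle f-d\sigma_h-p_h-\delta_h d u_h,b_e\chi_\psi\rangle_{K_1\cup K_2}-\langle de_\sigma+e_p,b_e\chi_\psi\rangle_{K_1\cup K_2}-\langle d(b_e\chi_\psi),de_u\rangle_{K_1\cup K_2}$; then \eqref{eq5-2}, the Hodge-star isometry $\|\psi\|_e\simeq\|\llbracket\trace\star d u_h\rrbracket\|_e$, an inverse inequality, and the volumetric bound already proved on $K_1\cup K_2\subset\omega_K$ finish this term. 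For $\llbracket\trace\star(f-d\sigma_h-p_h)\rrbracket$ (again $1\le k\le n-1$) I would split it as $\llbracket\trace\star(f-Pf)\rrbracket+\llbracket\trace\star(Pf-d\sigma_h-p_h)\rrbracket$: the first piece contributes $\osc(\partial K)$ directly, and the second --- now piecewise polynomial --- is treated by the same edge-bubble computation, testing \eqref{eq2-2} with $v=d(b_e\chi_\psi)$ (so $\langle d u,d(d(b_e\chi_\psi))\rangle=0$), which leaves $\langle de_\sigma+e_p,d(b_e\chi_\psi)\rangle$, the elementwise term $\langle b_e\chi_\psi,\delta_h(Pf-d\sigma_h-p_h)\rangle$ (controlled by the divergence-residual bound above), and the data term $\langle d(b_e\chi_\psi),Pf-f\rangle$, which after integrating back by parts produces only $\osc_\delta(\omega_K)$ and $\osc(\partial K)$.

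The step I expect to require the most care is the oscillation bookkeeping for the two residuals that already carry a factor $h_K$, namely $\|\delta(f-d\sigma_h-p_h)\|_K$ and $\|\llbracket\trace\star(f-d\sigma_h-p_h)\rrbracket\|_{\partial K}$: a naive substitution of $Pf$ there would produce $h_K\|f-Pf\|$, which is one power of $h_K$ too large to be absorbed into $\osc$. The remedy --- legitimate because $f\in H^1\Lambda^k(K)$ on each element --- is to push the $Pf-f$ contribution back by integration by parts so the data enters only as $\delta(f-Pf)$ in the interior (giving $\osc_\delta$) and as $\llbracket\trace\star(f-Pf)\rrbracket$ on faces (giving $\osc(\partial K)$). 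One must also verify that, after these manipulations, the remaining pieces of the $(f-d\sigma_h-p_h)$ jump term reduce exactly to quantities already bounded in the earlier steps; this, together with the overlap of edge patches, is why $\omega_K$ rather than $K$ appears on the right-hand side of \eqref{eq5-18}.
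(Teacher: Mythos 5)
Your proposal is correct and follows essentially the same route as the paper: bubble functions for the volumetric residuals (with $Pf$ replacing $f$ and the data error absorbed into $\osc$ and $\osc_\delta$), edge bubbles with the normal-constant polynomial extension for the jump terms, and reuse of the already-established volumetric and divergence-residual bounds to finish the face terms. The only cosmetic difference is that you insert the exact solution via the weak equations \eqref{eq2-2} while the paper substitutes the strong-form Hodge decomposition $f = d\sigma + p + \delta d u$ directly, and you keep $\langle d e_\sigma,\psi\rangle$ where the paper integrates by parts to $\langle e_\sigma,\delta\psi\rangle$; both are equivalent here and yield the same indicators.
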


\begin{proof}
For the case $k=n$, \eqref{eq5-17} follows trivially from the Hodge decomposition $f=d \sigma + p$ and the triangle inequality.

For the case $0 \le k \le n-1$, let $\psi = b_K (P f -d \sigma_h -p_h -\delta d u_h)$.  Then
\begin{align} \label{eq5-19}
\begin{aligned}
\|P f -d \sigma_h -p_h -\delta d u_h\|_K^2 & \simeq  \langle P f -d \sigma_h -p_h -\delta d u_h, \psi \rangle_K
  \\ & = \langle Pf-f, \psi \rangle_K + \langle f-d \sigma_h -p_h -\delta d u_h, \psi \rangle_K .
  \end{aligned}
  \end{align}
Employing the Hodge decomposition $f=d\sigma + p + \delta d u$ and then integrating by parts while recalling that $b_K$ and thus $\psi$ vanishes on $\partial K$ yields
\begin{align}\label{eq5-20}
\begin{aligned}
\langle f-d\sigma_h-p_h-\delta d u_h, \psi \rangle _K & = \langle de_\sigma + e_p + \delta d e_u, \psi \rangle_K 
\\ & = \langle e_\sigma, \delta \psi \rangle_K + \langle e_p, \psi \rangle_K + \langle d e_u, d \psi \rangle _K.
\end{aligned}
\end{align}
Collecting \eqref{eq5-19} and \eqref{eq5-20} and then employing the inverse inequality $\|d \psi \|_K + \|\delta \psi\|_K \lesssim h_K^{-1} \|\psi\|_K$, multiplying the result through by $h_K$, and dividing through by $\|Pf-d\sigma_h -p_h-\delta d u_h\|_K$ after recalling that $\|\psi\|_K \lesssim \|Pf-d\sigma_h -p_h-\delta d u_h\|_K$ yields
\begin{align}\label{eq5-21}
h_K \|Pf-d\sigma_h -p_h-\delta d u_h\|_K \lesssim \|e_\sigma\|_K + \|d e_u\|_K + h_K \| e_p\|_K + {\rm osc}(K).
\end{align}
Employing the triangle inequality completes the proof that $h_K \|f-d\sigma_h-p_h -\delta du_h\|_K$ is bounded by the right hand side of \eqref{eq5-18} when $1 \le k \le n-1$, or by the right hand side of \eqref{eq5-16} when $k=0$ after noting that in this case $p=p_h$ is a constant and recalling that $\sigma-\sigma_h$ is vacuous.  

We next consider the term $h_K^{1/2} \|\llbracket \trace \star d u_h \rrbracket \|_{\partial K}$ in \eqref{eq4-20}.  
Manipulations similar to those in the previous subsection yield
\begin{align}
\label{eq5-23}
h_K^{1/2} \|\llbracket \trace \star d u_h \rrbracket\|_{e} \lesssim \|d e_u\|_{K_1 \cup K_2} + h_K \|\delta d e_u\|_{K_1 \cup K_2}.
\end{align}
Employing the Hodge decomposition $f=\d \sigma + p + \delta d u$ yields $\delta d e_u = (f -d \sigma_h -p_h -\delta d u_h)-d e_\sigma - e_p$.  Thus
\begin{align}\label{eq5-24}
\begin{aligned}
h_K^{1/2} & \|\llbracket \trace \star d u_h \rrbracket\|_{e}   \lesssim \|d e_u\|_{K_1 \cup K_2}
\\ & +h_K (\|f-d \sigma_h -p_h -\delta d u_h\|_{K_1 \cup K_2} + \|d e_\sigma\|_{K_1 \cup K_2} + \|e_p \|_{K_1 \cup K_2}).  
\end{aligned}
\end{align}
Employing \eqref{eq5-21} on $K_1$ and $K_2$ individually completes the proof that $h_K^{1/2} \|\llbracket \trace \star d u_h \rrbracket \|_e$ is bounded by the right hand side of \eqref{eq5-18} in the case $1 \le k \le n-1$ and of \eqref{eq5-16} when $k=0$.

We now consider the term $h_K \|\delta (f-d \sigma_h -p_h)\|_K$.  First note that $h_K \|\delta (f-d\sigma_h-p_h)\|_K \le {\rm osc}_{\delta}(K) + h_K \|\delta(Pf-d \sigma_h-p_h)\|_K$.  Letting $\psi=b_K \delta(P f-d \sigma_h-p_h)$ and recalling the identities $\delta f=\delta d \sigma$ and $\delta p=0$, we integrate by parts to compute
\begin{align}\label{eq5-25}
\begin{aligned}
\|\delta(Pf-d \sigma_h-p_h)\|_K^2 & \simeq \langle \delta (Pf-d \sigma_h-p_h),\psi\rangle
\\  &= \langle \delta (Pf-f), \psi \rangle + \langle \delta(d e_\sigma+e_p), \psi \rangle
\\ & \le h_K^{-1} \osc_\delta (K) \|\psi\|_K + |\langle d e_\sigma + e_p, d \psi \rangle|
\\ & \lesssim h_K^{-1} (\osc_\delta(K) + \|d e_\sigma\|_K + \|e_p\|_K) \|\psi\|_K.
\end{aligned}
\end{align}
Further elementary manipulations as in \eqref{eq5-19} and following complete the proof that $h_K \|\delta (f-d \sigma_h -p_h)\|_K$ is bounded by the right hand side of \eqref{eq5-18}. 

We finally turn to the edge term $h_K^{1/2} \|\llbracket \trace \star (f-d \sigma_h-p_h) \rrbracket \|_{\partial K}$.  Note first that $\llbracket \trace \star (p+\delta d u) \rrbracket =0$ on all element faces $e$.  On interior faces this is a result of the fact that $p+\delta d u \in H^* \Lambda^k$, while for boundary edges this is a result of \eqref{eq3-7} along with the definition of $\H^k$.  Thus $\llbracket \trace \star f \rrbracket = \llbracket \trace \star d \sigma \rrbracket$.  Setting $ \star \psi=\llbracket \trace \star (P f-d \sigma_h -p_h)\rrbracket$ and letting $\chi_\psi$ be the polynomial extension of $\psi$ as above, we compute for a face $e=K_1 \cap K_2$ that
\begin{align}\label{eq5-26}
\begin{aligned}
\| \llbracket & \trace  \star (Pf-d \sigma_h -p_h)\rrbracket \|_{e}^2   \simeq \langle b_e \star \psi, \llbracket \trace \star (Pf-d \sigma_h -p_h)\rrbracket  \rangle
\\ & \le h_K^{-1/2} \osc(\partial K) \|\psi\|_e + |\langle b_e \psi, \llbracket \trace \star (d e_\sigma -p_h \rrbracket  \rangle| 
\\ &  = h_K^{-1/2} \osc(\partial K) \|\psi\|_e + | \langle d(b_e \chi_\psi), d e_\sigma-p_h \rangle_{K_1 \cup K_2} 
\\ & ~~~ + \langle b_e \chi_\psi, \delta (d e_\sigma-p_h)\rangle_{K_1 \cup K_2}|.
\end{aligned}
\end{align} 
Next note that $\langle d(b_e \chi \psi), p \rangle =0$, so that $\langle d(b_e \chi \psi), -p_h \rangle = \langle d(b_e \chi \psi), e_p \rangle$.  Using an inverse inequality and \eqref{eq5-2} then yields 
\begin{align}\label{eq5-27}
\begin{aligned}
\| \llbracket  \trace  \star (Pf-d \sigma_h -p_h)\rrbracket \|_{e}^2 & \lesssim h_K^{-1/2} \Big [{\rm osc} ( \partial K) +  \|e_p\|_{K_1 \cup K_2} + \|d e_\sigma\|_{K_1 \cup K_2}
\\ & ~~~~+ h_K \|\delta (f-d\sigma_h -p_h)\|_{K_1 \cup K_2} \Big ] \|\psi \|_{e}.  
\end{aligned}
\end{align}
Further elementary manipulations as above complete the proof that $h_K^{1/2} \| \llbracket \trace  \star (f-d \sigma_h -p_h)\rrbracket \|_{e}$ is bounded by the right hand side of \eqref{eq5-18}.  

\end{proof}

\subsection{Efficiency of harmonic indicators}  We finally state efficiency results for the various harmonic terms.

In this section we prove efficiency of the individual harmonic terms appearing in Lemma \ref{lem4-3}.  As we discuss more thoroughly below, however, we do not obtain efficiency of all of the terms that we originally sought to bound. 

\begin{lemma}\label{lem5-3}
Let $v_h \in V_h^k$.  Then
\begin{align}
\label{eq5-28}
\eta_{\H}(K, v_h) \lesssim \|P_\B v_h\|_{\omega_K}.  
\end{align}
In particular, we have for $u_h^\perp$, $q_i \in \H_h^k$, and $p_h$
\begin{eqnarray}
\label{eq5-29}
\eta_\H (K, u_h^\perp) &\lesssim& \|P_\B u_h^\perp\|_{\omega_K},
\\
\label{eq5-30}
\eta_\H(K, q_i) &\lesssim& \|P_\B q_i\|_{\omega_K}=\|q_i-P_\H q_i\|_{\omega_K},
\\
\label{eq5-30a}
\eta_\H(K, p_h) &\lesssim& \|e_p\|_{\omega_K}.
\end{eqnarray}
Thus with $\mu$ and $\mu_i$ as in Lemma \ref{lem4-3},
\begin{align}
\label{eq5-31}
\mu \lesssim {\rm gap}(\H^k, \H_h^k)
\end{align}
\end{lemma}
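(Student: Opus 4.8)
The plan is to prove the master estimate \eqref{eq5-28} by the bubble-function technique of \S\ref{sec_efficiency}, and then deduce \eqref{eq5-29}--\eqref{eq5-31} from a few elementary facts about the continuous and discrete Hodge decompositions. The mechanism behind \eqref{eq5-28} is the orthogonality observation that any test form of the shape $d\psi$ with $\psi \in H\Lambda^{k-1}(\Omega)$ lies in $\B^k$, so that by orthogonality of the $L_2$ Hodge decomposition $L_2\Lambda^k(\Omega) = \B^k \oplus \H^k \oplus \B_k^*$ one has $\la v_h, d\psi\ra = \la P_\B v_h, d\psi\ra$ for every $v_h$. This is precisely what converts the residual $\eta_\H(K,v_h)$, which is built from $\delta v_h$ and its tangential jumps, into $\|P_\B v_h\|$.

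For the volumetric contribution $h_K\|\delta v_h\|_K$ I would take the bubble test form $\psi = b_K\,\delta_h v_h \in H\Lambda^{k-1}(\Omega)$, where $\delta_h$ is the elementwise codifferential; $\psi$ vanishes on $\partial K$ and $d\psi$ is supported in $K$. Then by \eqref{eq5-1}, integration by parts \eqref{eq3-3} on $K$ (the boundary term vanishing), the orthogonality above, and the inverse inequality $\|d\psi\|_K \lesssim h_K^{-1}\|\psi\|_K$, one obtains $\|\delta_h v_h\|_K^2 \simeq \la \delta_h v_h, \psi\ra_K = \la v_h, d\psi\ra_K = \la P_\B v_h, d\psi\ra_K \lesssim h_K^{-1}\|P_\B v_h\|_K\,\|\delta_h v_h\|_K$, hence $h_K\|\delta v_h\|_K \lesssim \|P_\B v_h\|_K$. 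For the jump term on a face $e = K_1 \cap K_2$ I would repeat \eqref{eq5-8}--\eqref{eq5-10} verbatim: pick $\psi \in \Lambda^{n-1-k}(e)$ with $\star\psi = \llbracket \trace \star v_h\rrbracket$, form its polynomial extension $\chi_\psi$ with the scaling \eqref{eq5-2}, and integrate by parts on $K_1$ and $K_2$ separately to get $\|\llbracket \trace \star v_h\rrbracket\|_e^2 \simeq \la d(b_e\chi_\psi), v_h\ra_{K_1\cup K_2} - \la b_e\chi_\psi, \delta_h v_h\ra_{K_1\cup K_2}$. Since $b_e\chi_\psi \in H\Lambda^{k-1}(\Omega)$ is supported in $K_1 \cup K_2$, the first inner product equals $\la d(b_e\chi_\psi), P_\B v_h\ra$; an inverse inequality, \eqref{eq5-2}, the isometry $\|\psi\|_e \simeq \|\llbracket \trace \star v_h\rrbracket\|_e$, and the bound for $h_K\|\delta v_h\|_{K_i}$ just obtained then yield $h_K^{1/2}\|\llbracket \trace \star v_h\rrbracket\|_e \lesssim \|P_\B v_h\|_{\omega_K}$. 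Summing over the finitely many faces of $K$ gives \eqref{eq5-28}.

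The remaining assertions follow quickly. For \eqref{eq5-29} take $v_h = u_h^\perp$. For \eqref{eq5-30}, $q_i \in \H_h^k \subset \Z^k$ by \eqref{eq2-4-a}, so $P_{\B_k^*}q_i = 0$ and the continuous Hodge decomposition gives $P_\B q_i = q_i - P_\H q_i$; combined with \eqref{eq5-28} this is \eqref{eq5-30}. For \eqref{eq5-30a}, $p \in \H^k$ implies $P_\B p = 0$, so $P_\B p_h = -P_\B e_p$ and $\|P_\B p_h\|_{\omega_K} = \|P_\B e_p\|_{\omega_K} \le \|e_p\|_{\omega_K}$, which with \eqref{eq5-28} gives \eqref{eq5-30a}. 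Finally, summing the square of \eqref{eq5-30} over $\T_h$ and using the finite overlap of the patches $\omega_K$ gives $\mu_i = \big(\sum_K \eta_\H(K,q_i)^2\big)^{1/2} \lesssim \|q_i - P_\H q_i\|$; since $q_i$ is a unit vector in $\H_h^k$, $\|q_i - P_\H q_i\| \le \delta(\H_h^k,\H^k) = {\rm gap}(\H^k,\H_h^k)$ by \eqref{eq2-12}, so $\mu = \big(\sum_{i=1}^M \mu_i^2\big)^{1/2} \lesssim \sqrt{M}\,{\rm gap}(\H^k,\H_h^k)$; absorbing $M = \dim \H_h^k$ (the $k$-th Betti number of $\Omega$, a fixed topological quantity) into the implicit constant gives \eqref{eq5-31}.

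No step is deep; the work is bookkeeping. The part that demands the most care is the jump-term computation — keeping the elementwise codifferential $\delta_h$ distinct from the global $\delta$, using the polynomial extension $\chi_\psi$ with the correct power of $h_K$, and checking that the support of $b_e\chi_\psi$ genuinely licenses the replacement of $v_h$ by $P_\B v_h$. The other thing to get right is the short list of Hodge-orthogonality facts ($P_{\B_k^*}q_i = 0$ for $q_i \in \H_h^k$, $P_\B p = 0$ for $p \in \H^k$) that make the right-hand sides of \eqref{eq5-30} and \eqref{eq5-30a} genuinely local, and the harmless $\sqrt{M}$ factor appearing in \eqref{eq5-31}.
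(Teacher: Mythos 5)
Your treatment of \eqref{eq5-28}, \eqref{eq5-29}, \eqref{eq5-30}, and \eqref{eq5-31} is correct and is essentially the paper's own argument: the paper simply says \eqref{eq5-28} is ``a straightforward application of the bubble function techniques used in the previous subsections,'' and your two bubble computations, together with the observation that $d\psi\in\B^k$ lets you replace $v_h$ by $P_\B v_h$, are exactly the intended fleshing-out. The identity $P_\B q_i=q_i-P_\H q_i$ from $\H_h^k\subset\Z^k$, the finite-overlap summation, and the absorption of $\sqrt{M}$ (with $M=\dim\H_h^k=\dim\H^k$ fixed) all match the paper.

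There is, however, a genuine flaw in your derivation of \eqref{eq5-30a}. You write $\|P_\B p_h\|_{\omega_K}=\|P_\B e_p\|_{\omega_K}\le\|e_p\|_{\omega_K}$, but $P_\B$ is the \emph{global} $L_2(\Omega)$-orthogonal projection onto $\B^k$, and orthogonal projections are contractions only in the global norm: there is no reason that $\|P_\B w\|_{L_2\Lambda^k(\omega_K)}\le\|w\|_{L_2\Lambda^k(\omega_K)}$ for a general $w$ (take $w$ supported away from $\omega_K$ with $P_\B w$ nonzero on $\omega_K$). Your route therefore only yields $\eta_\H(K,p_h)\lesssim\|e_p\|_{L_2\Lambda^k(\Omega)}$, which destroys the elementwise localization that is the entire content of an efficiency bound. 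This is why the paper says \eqref{eq5-30a} is proved ``similarly'' rather than listing it as a special case of \eqref{eq5-28}: the correct argument reruns the bubble computation and uses the global orthogonality $p\in\H^k\perp\B^k$ to write $\la p_h,d\psi\ra=\la p_h-p,d\psi\ra=-\la e_p,d\psi\ra$, after which the compact support of $d\psi$ in $K$ (resp.\ $K_1\cup K_2$) localizes the bound to $\|e_p\|_K$ (resp.\ $\|e_p\|_{K_1\cup K_2}$) before any projection is introduced. The fix is one line, but as written the step is false.
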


\begin{proof}
The proof of \eqref{eq5-28} is a straightforward application of the bubble function techniques used in the previous subsections.  \eqref{eq5-29} and \eqref{eq5-30} are special cases of \eqref{eq5-28}, while \eqref{eq5-30a} may be proved similarly.  Finally,  summing \eqref{eq5-30} in $\ell_2$ over $K \in \T_h$ while employing the finite overlap of the patches $\omega_K$ (which is a standard consequence of shape regularity) implies that
\begin{align}\label{eq5-32}
\mu_i \lesssim \|q_i-P_\H q_i\|_{\omega_K},
\end{align}
which yields \eqref{eq5-31} when summed over $1 \le i \le M$.  
\end{proof}

\begin{remark}{\rm Lemma \ref{lem5-3} gives efficiency results for the terms in our a posteriori bounds for ${\rm gap}(\H^k, \H_h^k)$ and for $\|P_\B u_h^\perp\|$, but not for the quantity $\|P_\H u_h\|$ itself that we originally sought to bound.  More generally, we have not bounded all of the harmonic terms \eqref{eq4-60} and \eqref{eq4-61} by the error on the left hand side of \eqref{eq4-60} as would be ideal.  The offending terms 
are due to the nonconforming nature of our method which arises from the fact that $\H_h^k \not= \H^k$.  Establishment of efficiency of reliable estimators for this harmonic nonconformity error remains an open problem.  
}\end{remark}


\section{Examples}\label{sec_examples}

In this section we translate our results into standard notation for a posteriori error estimators in the context of the canonical three-dimensional Hodge-de Rham Laplace operators.  Below we always assume that $n=3$.

\subsection{The Neumann Laplacian}  When $k=0$, $\sigma$ and the first equation in \eqref{eq2-2}) are vacuous.  Also, $V^{k-1}=V^{-1}=\emptyset$, $V^k=V^0=H^1(\Omega)$, $d=\nabla$, and $\delta = -\Div$.  In addition, $p=p_h=\dashint_\Omega f$, and $\delta d u=-\Delta u$.  
The weak mixed problem \eqref{eq2-2} reduces to the standard weak form of the Laplacian and naturally enforces homogeneous Neumann boundary conditions.   In Lemma \ref{lem4-1} and Lemma \ref{lem4-3} we have $\eta_{-1}\equiv 0$ and  $\mu=\|P_\B u_h^\perp\|=0$, respectively.  
Thus $\eta_1$ is thus the only nontrivial indicator for this problem,  and it reduces to the standard indicator $\eta(K)$ from \eqref{eq1-3}.  Thus we recover standard results for the Neumann Laplacian.

\subsection{The vector Laplacian:  $k=1$}  For $k=1$ and $k=2$, the Hodge Laplacian corresponds to the vector Laplace operator $\curl \curl - \nabla \Div$, but with different boundary conditions.  For $k=1$, $u \in H(\curl)$, $\sigma = -\Div u \in H^1$, and the boundary conditions are $u\cdot n=0$, $\curl u \times n=0$ on $\partial \Omega$.   $\H^1$ consists of vector functions $p$ satisfying $\curl p=0$, $\Div p=0$ in $\Omega$ and $p \cdot n =0$ on $\partial \Omega$.  From \eqref{eq4-1}, 
\begin{align}\label{eq6-0}
\eta_{-1}(K)=h_K \|\sigma_h+\Div u_h\|_K+ h_K^{1/2} \|\llbracket u_h \cdot n\rrbracket \|_{\partial K}.
\end{align}
Here $n$ is a unit normal on $\partial K$.  From \eqref{eq4-20} we find
\begin{align}\label{eq6-0-a}
\begin{aligned}
\eta_0(K)& =h_K (\|f-\nabla \sigma_h-p_h -\curl \curl u_h\|_K + \|\Div (f-\nabla \sigma_h-p_h)\|_K)
\\ & ~~~+ h_K^{1/2} ( \|\llbracket (\curl u_h)_t \rrbracket \|_{\partial K} + \| \llbracket (f-\nabla \sigma_h-p_h) \cdot n \rrbracket \|_{\partial K}),
\end{aligned}
\end{align}
where a subscript $t$ denotes a tangential component.  
Finally, in Lemma \ref{lem4-3} we have
\begin{align} \label{eq6-0-b}
\eta_\H (K, q_h)=h_K \|\Div q_h\|_K + h_K^{1/2} \|\llbracket q_h \cdot n \rrbracket \|_{\partial K}.
\end{align}

\subsection{The vector Laplacian:  $k=2$}  In the case $k=2$ the mixed form of the vector Laplacian yields $\sigma = \curl u$, $u \in H(\Div)$, and $u \times n=0$, $\Div u=0$ on $\partial \Omega$.  In addition, $\H^2$ consists of vector functions $p$ satisfying $\curl p=0$, $\Div p=0$ in $\Omega$ and $p \times n=p_t=0$ on $\partial \Omega$.  We then have from \eqref{eq4-1} that
\begin{align} \label{eq6-0-c}
\begin{aligned}
\eta_{-1}(K) & = h_K (\|\Div \sigma_h \|_K + \|\sigma_h -\curl u_h \|_K) 
\\ &~~~+ h_K^{1/2} (\|\llbracket \sigma_{h} \cdot n \rrbracket \|_{\partial K} + \|\llbracket u_{h,t} \rrbracket \|_{\partial K}).
\end{aligned}
\end{align}
From \eqref{eq4-20} we have
\begin{align}\label{eq6-0-d}
\begin{aligned}
\eta_0(K)& = h_K ( \|f - \curl \sigma_h -p_h + \nabla \Div u_h \|_K + \|\curl ( f- \curl \sigma_h - p_h) \|_K) 
\\ & ~~~ h_K^{1/2} (\| \llbracket \Div u_h \rrbracket \|_{\partial K} + \|\llbracket (f-\curl \sigma_h - p_h)_t\rrbracket \|_{\partial K}). 
\end{aligned}
\end{align}
Finally, in Lemma \ref{lem4-3} we have
\begin{align}\label{eq6-0-e}
\eta_\H (K, q_h)= h_K \|\curl q_h \|_K + h_K^{1/2} \|\llbracket q_{h,t} \rrbracket \|_{\partial K}.
\end{align}

\subsection{Mixed form of the Dirichlet Laplacian} \label{subsec_classicalmixed}  For $k=3$, \eqref{eq2-2} is a standard mixed method for the Dirichlet Laplacian $-\Delta u=0$ in $\Omega$, $u=0$ on $\partial \Omega$, and $\sigma=-\nabla u$.  $d^2=\Div$, $d^3$ is vacuous, $\H^3=\H_h^3=\emptyset$, $V^{k-1}=H(\Div)$, and $V^k=L_2$.   Taking $\sigma_h$ and $u_h$ now to be proxy vector fields for $\sigma_h$ and $u_h$, we have in \eqref{eq4-1} that $\delta \sigma_h=\curl \sigma_h$, $\delta u_h=-\nabla u_h$, $\trace \star \sigma_h=\sigma_{h,t}$ (i.e., the tangential component of $\sigma_h$), and $\trace \star u_h = u_h$.
\begin{align}\label{eq6-1}
\eta_{-1}(K)=h_K (\|\curl \sigma_h\|_K + \|\sigma_h+\nabla u_h\|_{K}) + h_K^{1/2}( \|\llbracket \sigma_{h,t}\rrbracket \|_K+ \|\llbracket u_h\rrbracket \|_K).
\end{align}
In addition, \eqref{eq4-20} yields
\begin{align}\label{eq6-2}
\eta_0(K)=\|f-\Div \sigma_h\|_K.
\end{align}
The ``harmonic estimators'' in Lemma \ref{lem4-3} are all vacuous in this case.  Combining Theorem \ref{t4-4} with the corresponding efficiency bounds of \S \ref{sec_efficiency} thus yields
\begin{align}\label{eq6-3}
\|e_u\|_{L_2(\Omega)} + \|e_\sigma\|_{H(\Div; \Omega)} \simeq (\sum_{K \in \T_h} \eta_{-1}(K)^2 + \eta_0(K)^2)^{1/2}.
\end{align}

In contrast to the vector Laplacian, many authors have proved a posteriori error estimates for the mixed form of Poisson's problem, so we compare our results with existing ones.  We focus mainly on two early works bounding a posteriori the natural mixed variational norm $H(\Div) \times L_2$.  In \cite{BrVe96}, Braess and Verf\"urth prove a posteriori estimates for $\|e_\sigma\|_{H(\Div; \Omega)} + \|e_u\|$, as we do here, but their estimates are only valid under a saturation assumption (which is not a posteriori verifiable) and are not efficient.  Salient to our discussion is their observation on pp. 2440--2441 that the traces of $H(\Div)$ test functions lie only in $H^{-1/2}$.  This prevented them from employing the mixed variational form in a straightforward way, that is, using an inf-sup condition in order to test with functions in $H(\Div) \times L_2$.  Doing so using their techniques would have led to a duality relationship between traces lying in incompatible spaces, or more particularly, between traces lying in $H^{-1/2}$ and some space less regular than $H^{1/2}$.  Following ideas used in \cite{Ca97} in the context of the mixed scalar Laplacian and developed more fully in \cite{Sch08} for Maxwell's equations, we insert the essential additional step of first taking the Hodge decomposition of test functions.  Only the regular ($H^1$) portion of the test function is then integrated by parts, thus avoiding trace regularity issues.  Note finally that the elementwise indicators of \cite{BrVe96} are of the form $\|\Div \sigma_h-f\|_K + \|\sigma_h + \nabla u_h \|_K + h_K^{-1/2} \|\llbracket u_h \rrbracket \|_{\partial K}$, which includes our indicator $\eta_0$ and parts of our indicator $\eta_{-1}$.  However, the jump term $h_K^{-1/2} \|\llbracket u_h \rrbracket \|_{\partial K}$ is scaled too strongly (by $h_K^{-1/2}$ instead of $h_K^{1/2}$ in our estimator), and the resulting bounds are thus not efficient; cf. (4.20) of \cite{BrVe96}.  

In \cite{Ca97} Carstensen provided a posteriori estimators for the natural $H(\Div) \times L_2$ norm which are equivalent to the actual error as in \eqref{eq6-3}.   In our notation, Carstensen's elementwise indicators have the form $\|f-\Div \sigma_h\|_K+ h_K \|\curl \sigma_h \|_K + h_K \min_{v_h \in L_h}\|\sigma_h + \nabla v_h\|_K + h_K^{1/2} \|\llbracket \sigma_{h,t} \rrbracket \|_{\partial K}$.  Here $L_h$ is an appropriate space of piecewise polynomials.  Thus our terms $h_K \| \sigma_h + \nabla u_h\|_K+ h_K^{1/2} \|\llbracket  u_h \rrbracket \|_{\partial K}$ are replaced in Carstensen's work by $h_K \min_{v_h \in L_h}\|\sigma_h + \nabla v_h\|_K$, and our estimators are otherwise the same.   However, Carstensen's results were proved only under the restrictive assumption that $\Omega$ is convex, which we avoid.  \cite{Ca97} also makes use of a Helmholtz (Hodge)  decomposition, but a {\it commuting} quasi-interpolant was not available at the time and thus full usage of the Hodge decomposition was not possible.  
 
Most works on a posteriori error estimation for mixed methods subsequent to \cite{Ca97} have focused on measuring the error in other norms, e.g., $\|e_\sigma\|_{L_2}$ (cf. \cite{LaMa08, Vo07}).  One essential reason for this is that the $H(\Div) \times L_2$ norm includes the term $\|f-\Div \sigma_h\|$ which directly approximates the data $f$ and which can thus be trivially computed a posteriori.  Thus while the $H(\Div) \times L_2$ norm is natural to consider from the standpoint of the mixed variational formulation, it is perhaps not the most important error measure in practical settings.  Even with this caveat, our estimators for mixed methods  for the Dirichlet Laplacian seem to be the first estimators that are directly proved to be reliable and efficient for the natural mixed variational norm under reasonably broad assumptions on the domain geometry.

\section*{Acknowledgements}

We would like to thank Doug Arnold for helpful discussions concerning Lemma \ref{lem_gap} and Gantumur Tsogtgerel for helpful discussions concerning Lemma \ref{lem3-1}.

\bibliographystyle{siam}
\bibliography{hirani,demlow}


\end{document}